\def\ZZ         {{\mathbb Z}} 
  
\def\CC         {{\mathbb C}}

\def\PP         {{\mathbb P}}
\def\NN         {{\mathbb N}}
\def\ZZ         {{\mathbb Z}}
 
\def\X          {{\cal X}} 
\def\A         {{\cal A}}																																																	

\def\C         {{\cal C}}

\def\CL          {{\mathcal L}}

\def\CO         {{\mathcal O}}

\def\S           {{\cal S}}

\def\X           {{\cal X}}

\def\cal        {\mathcal}

\documentclass{amsart}
\usepackage[colorlinks=true,allcolors=blue!80!black]{hyperref}
\usepackage{tikz, tikz-cd}
\usetikzlibrary{matrix, snakes, patterns, shadows.blur, backgrounds}
\usetikzlibrary{decorations.markings}
\usepackage{amssymb}
\usepackage{verbatim}
\usepackage{eucal}
\usepackage{enumerate}
\usepackage{tikz}
\usetikzlibrary{decorations.markings,decorations.pathmorphing,cd}
\usetikzlibrary{arrows}
\usetikzlibrary{calc}
\usepackage{pgfplots}
\pgfplotsset{every axis/.append style={
                    axis x line=middle,    
                    axis y line=middle,    
                    axis line style={-,color=blue}, 
                    xlabel={$x$},          
                    ylabel={$y$},          
            }}

\usepackage{amsthm}
\usepackage{amssymb}
\usepackage{enumerate}
\usepackage{graphicx}
\usepackage{hyperref}
\newtheorem{thm}{Theorem}[section]
\newtheorem{theorem}{Theorem}[section]

\newtheorem{prop}[theorem]{Proposition}

\newtheorem{cor}[theorem]{Corollary}
\theoremstyle{definition}
\newtheorem{rem}[theorem]{Remark}
\newtheorem{dfn}[theorem]{Definition}

\newtheorem{example}[theorem]{Example}

\theoremstyle{remark}

\title[Curves with restricted classes of components]
{Fundamental groups of the complements to reducible 
curves on smooth surfaces with restricted classes of components.} 
\author{A.Libgober}

\address{Department of Mathematics\\
University of Illinois\\
Chicago, IL 60607}
\email{libgober@uic.edu}
\thanks{}
\pgfplotsset{compat=1.18}
\begin{document}
\begin{abstract}
We classify equisingular families of 
pencils of curves on smooth simply connected projective surfaces whose dual varieties have degree less than 6
and  containing a sufficiently large number of members whose irreducible components are either hyperplane sections or irreducible components of 
hyperplane sections. 
As a consequence, we describe the fundamental groups 
of complements to curves on such surfaces whose irreducible components satisfy the above condition and whose fundamental groups admit
an essential surjection onto a free group of rank greater than 6. 
\end{abstract}
\maketitle 

\section{Introduction.} Let $X$ be a smooth simply connected projective surface. We are interested 
 in reduced curves $D$ on $X$ such that $\pi_1(X\setminus D)$ admits a surjection
onto a non-abelian free group. Such pairs $(X,D)$ were considered recently 
in \cite{cogome}, \cite{handbook}, suggesting a path to their classification up to equisingular deformations, assuming 
that the rank of the free quotient of $\pi_1(X\setminus D)$ is sufficiently large and if 
 one restricts the numerical classes of irreducible components of $D$ by requiring that they belong 
 to a fixed subset of the Neron-Severi group.
  This can be viewed as a further refinement of the dichotomy between "small and large" quasi-projective groups pointed out in \cite{arapurafibred} 
 (cf. also \cite{catanesefibred}). 
 The goal of this paper is to describe such a classification when $X \subset \PP^N$ has  the degree of the dual variety less than 6 and calculate the fundamental groups $\pi_1(X\setminus D)$ 
 assuming that $\pi_1(X\setminus D)$ admits an essential and transversal surjection (cf. Definition \ref{threshold}) onto a free group of rank greater than 6 and 
 that irreducible components of $D$ are either hyperplane sections or are their irreducible components.
  
The key ingredient in the study of $\pi_1(X\setminus D)$ having free quotients of rank at least 2
is the fundamental result on quasiprojective manifolds admitting families of rank one 
local systems  with non-vanishing cohomology (cf.\cite{arapura}). Such a family on $X\setminus D$
 must be a pullback of a family of local systems on the complement to a finite set of points in a curve, which since $X$ is simply connected must be 
 $\PP^1$, via 
a holomorphic map corresponding to a rational pencil on $X$. Moreover, as is shown in Proposition \ref{pencils} below, with appropriate conditions on the surjection of $\pi_1(X\setminus D)$ onto a free group, the curve $D$ is a union of several reduced members of this pencil. 
 This replaces the problem of classifying of curves $D$ with $\pi_1(X\setminus D)$ admitting 
 free quotients having rank greater than one, 
by the problem of classification of the pencils on $X$.
 
An early result, suggesting interesting finiteness properties for pencils  assuming that the  
classes of all irreducible components of $D$ belong to a fixed set $\nabla\subset NS(X)$, 
 concerns the case when $X=\PP^2$ and the curve $D$ is an arrangement of  lines i.e. a curve
 with all irreducible components restricted to having the class $[1]\in NS(\PP^2)=\ZZ$. It was shown 
in \cite{libyuz}, that a pencil of plane curves having more than 5 members that are unions 
of lines must be a pencil of concurrent lines (later, the technical assumptions in \cite{libyuz} were weakened and the threshold 5 was sharpened to 4 in  \cite{stipins}, \cite{yuz4}, \cite{falkyuz}). 
Related works, considering the number of reducible members in the pencil, though 
without restricting the classes of components, are \cite{vistoli},\cite{ruppert}. 

In \cite{cogome} it was shown that if $r \in \NN$ is above a certain threshold $M_2(X,\nabla)$, depending on the surface and a set $\nabla \subset NS(X)$, then a pencil of curves on $X$ with $r+1 \ge M_2(X,\nabla)$ members having component
in $\nabla$ must be a pencil of curves already having all its members in a class in $\nabla$. In the same work, some thresholds were calculated, and in particular it was shown that a pencil of plane curves containing more than 
6 members with irreducible components being either conics or lines (i.e. $D$ is a conic-line arrangement) then this 
pencil is a pencil of conics (or lines). 

In \cite{handbook} it was observed that the number of pencils is related to complexity of the discriminants of the linear systems
in $\nabla$. Recall that the discriminant in $\PP(H^0(X,L))$ where $L$ is a line bundle, is the subvariety (possibly empty)
consisting of singular divisors in this complete linear system. If $\vert L\vert $ is the linear system of hyperplane sections, then the discriminant is 
what is classically known as the dual variety of $X$ (cf. \cite{kleiman} for classical overview of dual varieties).  
Main results below consider $\nabla \subset NS(X)$ such that  all line bundles $\CO(\delta)$,  where the class of $\delta$ is in  $\nabla$,  
have a small degree of discriminant and calculate the groups $\pi_1(X\setminus D)$ with irreducible components of $D$ having classes in $\delta \in \nabla$. 
The condition, that $X$ admits an ample linear system with the degree of discriminant 
less than 6 is very restrictive. We show that 
the classical results (cf. \cite{fujita0}, \cite{fujita1}, \cite{lanteri} and references therein) imply that such polarized surfaces are either $(\PP^2,\CO(1)),(\PP^2,\CO(2)),(V_2,\CO(1))$ or  
scrolls of degree less than 6 (cf. Cor. \ref{atmost6}). At this point, we note that in this paper we consider only the case when pencils satisfy additional
technical conditions (cf. Definition \ref{threshold}) which assure that the pencils are sufficiently generic. A complete linear system on a surface, even with a 
small degree of discriminant and satisfying these technical conditions, may admits several (equisingular) types of pencils. We show that on the polarized surfaces with degree of discriminant less than 6 there are 17 families of pencils with constant equisingularity type of their singular members (cf. Section \ref{finalsection} and the Table therein). 

This classification of pencils in linear systems with small degree of discriminant
has as an immediate corollary the classification of the fundamental groups with free quotients of large rank.  The geometric conditions on
the pencils that we consider can be restated in group-theoretic terms as conditions of the surjection (cf. Proposition \ref{pencils} and Remark \ref{arrangconnection}) 
and we obtain classification for pairs $(X,D)$ such 
that $D$ has components moving in linear systems with the degree of discriminant less than 6 and have surjection 
onto a free group of rank $r$ greater than 6 which is the largest of the thresholds for such pairs $(X,\nabla)$. In 
particular any group, having surjection on a free group of rank greater than $6$ and satisfying the conditions that we call  
being "essential and transversal" (cf. Definition \ref{threshold}),   
is a product $F_k\times F_r, 1 \le k \le 4$, except for plane conic-line arrangements in which case the presentations of corresponding groups obtained 
explicitly (cf. Theorem \ref{presentationquadrics}). These results can be summarized as follows:

\begin{theorem} If the fundamental group of the complement to a curve on a  polarized simply connected surface 
admits surjection onto a free group of rank $r$ greater than 2 that is essential and transversal then the curve is a union of reduced members of a pencil 
and intersect transversally at the base points.
If the pencil lies in a very ample linear system, the degree of discriminant of polarization is less than 6 and if $r > 6$, then 
the surface, the pencil with only reduced members and corresponding fundamental group of the complement to the curve are listed in Table 1
in the final section.
\end{theorem}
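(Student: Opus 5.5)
The theorem is a summary of results proved later, so my plan is to assemble it from those results in two stages.

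\emph{Stage one: from the surjection to a pencil.} Start from a surjection $\phi:\pi_1(X\setminus D)\to F_r$ with $r\ge 2$. This gives a positive-dimensional family of rank one local systems on $X\setminus D$ with nonvanishing first cohomology, namely the pullbacks of characters of $F_r$. By Arapura's theorem \cite{arapura}, such a family comes from an admissible holomorphic map $f:X\setminus D\to C$ onto a curve with $\chi(C)<0$. Since $X$ is simply connected, the map extends to a rational pencil on $X$, and $C=\PP^1\setminus\{r+1 \text{ points}\}$.

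Next I use essentiality: $\phi$ factors through $f_*$, and the kernel is not too large. From this I want to show that $D$ is contained in the union of the fibres over the $r+1$ punctures. Conversely, every component of those fibres must lie in $D$; otherwise the punctured curve would be smaller and the rank would drop. So $D$ is a union of members of the pencil. These members are reduced because $D$ is reduced, and the transversality hypothesis rules out multiple fibres carrying nontrivial orbifold structure.

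For the base points, I would argue that tangency of two members at a base point forces extra infinitely near base points. Transversality of $\phi$ is exactly designed to exclude this, so the members meet transversally at the base points. This stage is Proposition \ref{pencils}.

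\emph{Stage two: classification.} Now assume the pencil lies in a very ample system $|L|$ whose discriminant has degree $<6$, so by Cor. \ref{atmost6} $(X,L)$ is $(\PP^2,\CO(1))$, $(\PP^2,\CO(2))$, $(V_2,\CO(1))$, or a scroll of degree $<6$.

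A pencil is a line in $\PP(H^0(L))$. Each member of $D$ whose components are hyperplane sections or components of hyperplane sections is reducible, hence singular, hence lies in the discriminant. If the line is not contained in the discriminant, it meets it in at most $\deg<6$ points. With $r+1>7$ such members the line must therefore lie inside the discriminant, or every member is singular. I would turn this into the threshold statement of \cite{cogome}: all members lie in a single class of $\nabla$.

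I would then enumerate, case by case, the lines contained in the discriminant of each listed polarized surface:
\begin{itemize}
\item concurrent lines in $\PP^2$;
\item pencils of conics in $\PP^2$, grouped by the Segre types of their base loci;
\item pencils of rulings and hyperplane sections through rulings on scrolls;
\item pencils on the quadric cone $V_2$.
\end{itemize}
For each I check the reducedness and transversality conditions. This produces the 17 equisingular families.

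\emph{Computing the groups.} For a pencil with $r+1$ chosen members, once one removes a fibre of a map with no multiple fibres, the group is a product $F_k\times F_r$ when the base locus is empty or behaves like a fibration after blowing up. Here $F_k$ is the fundamental group of a generic fibre minus the base points. Conic-line arrangements are the exception and are handled by Theorem \ref{presentationquadrics}.

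The main obstacle is the exhaustive classification of lines in the discriminants of the scrolls and of $V_2$, together with verifying that the list is complete.
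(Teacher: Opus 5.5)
Your Stage one is essentially Proposition \ref{pencils}. One correction: reducedness of the members comes from essentiality (a), not from $D$ being reduced or from (b). A reduced $D$ can contain the support of a member $mC$; (a) excludes this because the meridian of $C$ would then map to an $m$-th power rather than to a generator.

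The genuine gap is in Stage two. You claim every member of $D$ with components in $\nabla$ is reducible, hence in the discriminant. This is false: a smooth hyperplane section is itself such a member, and in every row of Table 1 all but at most five of the $r+1$ members are smooth. So nothing pushes the line into the discriminant, and in fact it never lies there. Members are smooth at the base points because distinct members meet transversally there. Away from the base points the generic member is smooth by Bertini, applied to the base point free pencil on the blow-up. Enumerating lines contained in the dual variety therefore enumerates the wrong (here empty) set. Nor can a degree count give the threshold. For a pencil in $|aE+bF|$ with $a>1$ or $b>k$, or of plane curves of degree above $2$, the relevant discriminant is that of the larger system, and its degree has nothing to do with the bound $<6$ for $L$. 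The paper gets the reduction to pencils in a class of $\nabla$ from the Euler characteristic inequality (\ref{keyinequality}) in Theorem \ref{keybound}, and from \cite{cogome} for conic-line arrangements. Your proposal only gestures at this step.

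What should replace the enumeration is the following. A pencil in $|L|$ with reduced members is a line meeting the dual variety in finitely many points. All of these lie in strata of reduced curves: $S_{1^l}$ and $T_{1^k}$ on a scroll, line pairs for $(\PP^2,\CO(2))$, and the dual quadric for $V_2$ (which is the smooth quadric, not a cone). The admissible combinations are cut out by the multiplicities of these strata (Theorem \ref{stratification}, part 3) together with Bezout: $\sum l_i=2k-e$, or $\sum l_i=k-e$ when a $T_{1^k}$ member occurs (Proposition \ref{pencils classification}). This is what produces the rows of the table.

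Your group computation also needs repair. The fundamental group of a generic fibre minus the base points is the second factor only when all singular members are deleted. Filling in $i$ of them lowers it to $F_{2k-e-1-i}$ (Corollary \ref{addingmembersgenericcase}); for example $P^1_r=F_r$, not $\ZZ\times F_r$, in Proposition \ref{quadricextremal}. Moreover, a fibration argument by itself gives only a semidirect product. The paper gets the direct product from an explicit birational map: blow up the base points, then blow down the ruling components of the singular members. This identifies the complement with $\PP^1\times\PP^1$ minus rulings from both families (Theorem \ref{extermalgroupsgenericcase}, Proposition \ref{groupsTstratapencils}).
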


A more detailed account of the results is as follows. 
We begin by recalling the key definitions and results, especially
those in \cite{cogome}. 

In section 3 we improve the upper bound on the thresholds i.e. the bound on the maximal number of members 
in pencils, with irreducible components in a fixed saturated set $\nabla \subset NS(X)$, that can appear in linear systems 
 that are outside of $\nabla$, 
 in the case of pencils of curves on scrolls. 
We show that for a polarized Hirzebruch surface $F_e,E+kF, k>e$, i.e. a scroll $S_{k-e,k}\subset \PP^{2k-e+1}$, 
an essential and transversal pencil having more than $max(2(k+1)-e,7)$ reducible members with irreducible components being 
in saturated system $\nabla_k=\{E,F, E+lF, e \le l \le k)$ (i.e. the hyperplane sections of the scroll and their possible irreducible components), 
must be a pencil of curves with classes in $\nabla$. 
In fact, for pencils in linear systems $aE+bF$ with $a,b$
sufficiently large, the number of singular members is at most 6. It is an interesting open question whether there are pencils on scrolls, having 7 reducible 
members with components in $\nabla$ but in section 4, for a quadric $W$ in $\PP^3$, we describe a pencil of quadric surfaces inducing pencil 
with $6$ members split into a union of two hyperplane sections.

In section 5 we find all polarized surfaces with the degree of the dual variety at most 12 and identify those for which this degree  is at most 6. This is a classical material, though we did not find direct references in the literature, and results follow from numerous 
works on surfaces of small degree (our prime references are works by T.Fujita and A.Lanteri: \cite{fujita0},\cite{fujita1},\cite{lanteri}). 
In this section we also describe the stratification of the dual varieties of scrolls. The study of these questions goes back 
to A.Cayley and J.Sylvester and more recently, in much wider generality, to I.Gelfand, M. Kapranov and A. Zelevinski but we did not find 
specific references, in particular enumeration of strata, and their explicit description (cf. Theorem \ref{stratification}). 
However, the work \cite{ciliberto} does contain some overlaps with this section (cf. Prop. 1.5, ibid). 

In the final sections, we calculate the fundamental groups of the complements for all essential and transversal pencils on 
polarized surfaces with degree of discriminant being at most 6. 
 
The main step in our classification of groups admitting free quotients of large rank, is the case of arrangements of curves of surfaces 
associated with pencils of curves 
in the following way. Let $\vert L\vert$ be a complete linear system on a polarized smooth surface $X$ and let $\S_1,\cdot  \S_k$ be the full collection 
of strata of the dual variety of $X$ (i.e. the discriminant of $\vert L \vert$) which admit a secant line in the dual space of $\vert L\vert$. 
\begin{dfn}\label{generalceva} The arrangement $\A^N_S(\S_1,\cdots, \S_k)$ 
corresponding to a polarized surface $X,L$ and a collection of strata $\S_1,\cdots,  \S_k$ of the dual variety of $X$ is a reducible curve on $X$ formed 
by a set of $k$ singular members of a pencil $P$ intersecting the dual variety only at the points in the strata  $\S_i, i=1,\cdots, k$  and $N-k$ smooth members of $P$.
\end{dfn}
The arrangements $\A^N_S(\S_1\cdots \S_k)$ generalize several classical configurations of lines (cf. \cite{dolg}). For example, the Ceva arrangement $(6_2,4_3)$
is the arrangement of lines formed by singular members of a generic pencil of plane conics i.e. is $\A(S,S,S)$ where $S$ is 
the 4-dimensional stratum of the discriminant of the linear system of plane quadrics, corresponding to those that are reducible and reduced. The Hesse arrangement $(12_3,9_4)$ corresponds to the pencil of plane cubics, intersecting only the strata of reducible cubics that are union of three non-concurrent lines i.e.
is $\A(T,T,T,T)$ where $T$ is the latter stratum of the discriminant of plane cubics, etc.

For polarized surfaces with the degree of discriminant less thant 6 with saturated system described in section 5, 
a pencil having more than 7 members with irreducible components in $\nabla$ must be a pencil of curves in $\nabla$ and 
 the fundamental groups of the complement to union $N>7$ members of such
pencil  are quotients of the fundamental groups of the complement to union of the arrangements $\A^N_S(\S_1\cdots \S_k)$ and several 
smooth members of the pencil. As a corollary we obtain the 
following:
\begin{cor} If fundamental group of the complement to a curve on a polarized surface with the degree of discriminant less thant 6 
admits an essential and transversal surjection onto a free group of rank greater than $6$, then this group is one of the groups 
in the last column in the table in section  \ref{finalsection}.
\end{cor}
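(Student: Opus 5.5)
The plan is to chain together the structural results announced in the introduction, so that the corollary reduces to a case-by-case reading of the Table in Section \ref{finalsection}.

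\textbf{Step 1: reduce to a pencil.} Let $D\subset X$ be a curve on a polarized surface $(X,L)$ with $\deg X^\vee<6$. Suppose $\phi:\pi_1(X\setminus D)\to F_r$, $r>6$, is essential and transversal in the sense of Definition \ref{threshold}. By Arapura's theorem \cite{arapura}, the positive-dimensional component of the characteristic variety coming from $\phi$ is pulled back from a punctured curve via an admissible map. Since $X$ is simply connected, that curve is $\PP^1$ minus at least $r+1$ points. Proposition \ref{pencils} then shows that $D$ is the union of $N\ge r+1\ge 8$ reduced members of a pencil $P$. Moreover, by transversality, these members meet transversally at the base points of $P$.

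\textbf{Step 2: identify the pencil.} Each irreducible component of $D$ is a hyperplane section or a component of one, so its class lies in the saturated set $\nabla$. Hence $P$ has at least $8>7$ members with all components in $\nabla$. The threshold bounds then apply: on $(\PP^2,\CO(1))$ and $(\PP^2,\CO(2))$ we use \cite{cogome}, and on scrolls and $V_2$ we use the bound $\max(2(k+1)-e,7)$ of Section 3. We must check that for the scrolls allowed by Cor. \ref{atmost6} (degree $<6$) this maximum is at most $7$. With this, the bounds force $P$ to lie in a linear system whose class is in $\nabla$, i.e. $P$ is a pencil of hyperplane sections or a pencil in one of the finitely many subsystems listed in Section 5. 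Cor. \ref{atmost6} restricts $(X,L)$ to $(\PP^2,\CO(1))$, $(\PP^2,\CO(2))$, $(V_2,\CO(1))$, or a scroll of small degree.

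\textbf{Step 3: compute the group.} The singular members of $P$ meet the dual variety in points of strata $\S_1,\dots,\S_k$ described by Theorem \ref{stratification}. Therefore $D$ is the arrangement $\A^N_S(\S_1,\dots,\S_k)$, possibly with some singular members omitted and some smooth members added. The group $\pi_1(X\setminus D)$ is then a quotient of the group of the full arrangement with the extra smooth members, and it is computed for each of the 17 equisingular families in Section \ref{finalsection}. In the generic cases we obtain $F_k\times F_r$ with $1\le k\le 4$. This uses that the fibration over $\PP^1$ minus $N$ points is locally trivial away from the singular fibers and has a section-like splitting given by transversality at the base points. The conic-line case is instead handled by Theorem \ref{presentationquadrics}. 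Matching each case gives an entry in the last column of the Table.

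\textbf{Main obstacle.} The hardest point is Step 2: showing that the numerical thresholds for every surface allowed by Cor. \ref{atmost6} are at most $7$, so that $r>6$ suffices uniformly. A further difficulty is checking that the equisingular classification of pencils in Section \ref{finalsection} is exhaustive, including degenerate pencils whose members meet several strata simultaneously. For the first I would verify the Section 3 inequality directly for each scroll $S_{k-e,k}$ with $2k-e<6$. For the second I would rely on the explicit stratification of the dual varieties of those scrolls.
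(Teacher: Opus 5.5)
Your overall architecture is the one the paper uses. Prop.~\ref{pencils} turns the surjection into a pencil. The thresholds (Theorem~\ref{keybound}, and the conic--line threshold for $(\PP^2,\mathcal{O}(2))$) force the pencil into a linear system from $\nabla$. Cor.~\ref{atmost6} lists the surfaces, and Theorem~\ref{stratification} with Prop.~\ref{pencils classification} lists the possible singular members. The groups are then read off case by case, which is what Theorem~\ref{finalsummary} and the Table record. What you call the main obstacle in Step 2 is immediate: $2k-e\le 5$ gives $2(k+1)-e\le 7$, so $r+1\ge 8$ is above the bound for every scroll allowed by Cor.~\ref{atmost6}.

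The genuine gap is in Step 3, the one place where you supply your own mechanism. Consider the locally trivial fibration over $\PP^1$ minus $r+1$ points, with fibre $\PP^1$ minus the $2k-e$ base points and split by the exceptional sections. It only gives an extension $1\to F_{2k-e-1}\to\pi_1(X\setminus D)\to F_r\to 1$, i.e.\ a semidirect product. The direct product $F_{2k-e-1}\times F_r$ requires the monodromy on the fibre to be trivial (or inner), and neither the sections nor transversality at the base points gives this. The generic pencil of conics has exactly the same features (four exceptional sections, transversal base points). Yet its monodromy consists of the non-trivial pure braids $s_1^2$, $s_1^{-1}s_2^2s_1$, $s_2^2$ and yields $Q^3_r$ of Theorem~\ref{presentationquadrics}. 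So these features alone cannot produce the products in the Table.

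What the paper actually uses is a birational model (Theorem~\ref{extermalgroupsgenericcase}, Prop.~\ref{groupsTstratapencils}). It blows up the base points and blows down the $(-1)$-curves coming from the ruling components of the singular members. It then checks via self-intersections that the result is $\PP^1\times\PP^1$, with the exceptional curves and the deleted members becoming fibres of the two projections. Hence $X\setminus D$ is biregular to a product of punctured lines.

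An equivalent shortcut goes as follows. Members of $|E+kF|$ are sections of the ruling $\rho$, since $(E+kF)\cdot F=1$. The ruling through a base point therefore meets every member only at that point, so it is a component of some singular member. If all singular members are deleted, the map $x\mapsto(\rho(x),t(x))$, with $t(x)$ the pencil parameter, is then an isomorphism onto such a product.

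When some singular members are not deleted, you also need the filling-in step of Cor.~\ref{addingmembersgenericcase}. Killing the meridians of the components of a filled-in member kills the meridians of the corresponding exceptional curves in the product model. This lowers the rank of the first factor and is how the shorter products $F_k\times F_r$ in each row of the Table arise.
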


The study of the pencils here is related to the study of Lefschetz pencils in symplectic geometry, especially in the case of scrolls 
(cf. \cite{baykur}) but the pencils we consider have singular members with several ordinary quadratic singularities.
 Several of results of this paper can be sharpened. It would be interesting to continue the enumeration of extremal group (cf. Definition \ref{threshold}), 
 i.e. the fundamental groups admitting free quotient of the rank above the threshold (we have limited ourself to the case of discriminant less than 6 
 to keep length from being excessive, though their number grows extremely rapidly after we move into the range where the degree of discriminant 
is 12 and higher (i.e. plane cubics, hyperplane sections of del Pezzo surfaces and so on). The outstanding question 
is if one can detect any interesting pattern in the fundamental groups of the complement beyond this bound 12. 

Finally, I want to thank I. Dolgachev for the reference to the classical book by G.Salmon (cf. \cite{salmon} and footnote in the proof of Corollary \ref{smalldiscrimlist}) and F.Catanese,  for his comments on the earlier works on the relation between surjections of the quasi-projective groups 
onto groups of hyperbolic curves and pencils.

\section{Preliminaries: Quasi-projective and affine fundamental groups with free quotients} 

In this section we recall the connection between the curves for which the fundamental group of the complement admits 
surjection onto $F_r, r>1$ and pencils of curves as well as some results from \cite{cogome}. Throughout the paper, 
$X$ is a smooth projective simply connected surface and $D \subset X$ is a reduced curve. Note that existence of surjection onto $F_r, r>1$
 implies that the curve must be 
reducible as follows from the following relation (a consequence of the exact sequence of the pair $(X,X\setminus D)$ cf. \cite{handbook}, Section 3): 
\begin{equation} H_1(X\setminus D,\ZZ)=\{[D_i]\}/Im H_2(X,\ZZ)
\end{equation} 
where $\{[D_i]\}$ is free abelian group with generators corresponding to irreducible components $D_i$ of $D$, and the map of 
$H_2(X,\ZZ)$ to the free abelian group generated by the classes of irreducible components, takes a class $\alpha$ to 
$(\cdots, (\alpha \cdot C_i),\cdots) \in \{[C_i]\}$ (in particular, if $D$ is irreducible then $H_1(X\setminus C,\ZZ)$ is cyclic).

\begin{dfn} Let $C\subset X$ be an irreducible curve on a complex surface $X$. A meridian of $C$ is 
a loop in $X\setminus C$ which is the oriented boundary of a small 2-disk in $X$ transversal to $C$ at  a smooth point.
Sometimes we also use the term "meridian" referring to {\it the conjugacy class} of the element of the fundamental group $\pi_1(X\setminus C,p), p \in X\setminus C$ represented by a loop consisting of a path in $X\setminus C$ connecting $p$ with a point of the meridian. 
Meridional rank $m(D)$ of a curve $D$ on $X$ with irreducible components $D_i$ is  the minimal number of meridians of irreducible components 
generating $\pi_1(X\setminus D,p)$. 
\end{dfn}

\begin{rem} Conjugacy of all meridians of each irreducible component follows from the connectivity of the complement in an irreducible curve $C$ to the set of singular points. 
In particular, we will use additional structure on $\pi_1(X\setminus D), D=\bigcup_1^N D_i$ (where $D_i$ are irreducible) given by  the set of 
$N$ conjugacy classes of $\pi_1(X\setminus D)$ of the meridians of irreducible components of $D$.
\end{rem}


The following Proposition makes precise the correspondence between surjections onto free groups of rank greater 
than 1 and pencils of curves (for much earlier relation of such type cf. \cite{ingrid}, \cite{catanese2000}, \cite{catanesefibred}) as well as earlier works mentioned there.

\begin{prop}\label{pencils} Let $X$ be a smooth simply connected projective surface and let $D$ be a reduced reducible curve  
such that  $\pi_1(X\setminus D)$ admits a surjection $\phi$
onto a free group $F_r, r>1$.
Assume the following: 

(a) the surjection $\phi$
maps a meridian of each irreducible component of $D$ to one of $r$  fixed generators $x_i$ of $F_r$ or $(x_1\cdots x_r)^{-1}$, and 

(b) at each intersection point of components of $D$ the singularity of $D$ is an ordinary multiple point (transversal intersection of smooth branches). 

Then there exists a pencil on $X$ such that $D$ is a union of several reduced members of this pencil. Moreover, the pull back of this pencil 
to the blow up of $X$, once at each base point of this pencil, is base point free.
\end{prop}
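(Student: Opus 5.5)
The plan is to produce the pencil from a holomorphic map to $\PP^1$ minus finitely many points, using the orbifold/Arapura--Catanese theory of maps to curves. Let $U=X\setminus D$ and $\phi:\pi_1(U)\to F_r$, $r\ge 2$. The free group $F_r$ is the fundamental group of $\PP^1$ with $r+1$ points removed, and the characteristic variety of $F_r$ contains the positive-dimensional component $\{(t_1,\dots,t_r)\}$ of rank one local systems with $H^1\ne 0$. Pulling back by $\phi$ gives an irreducible positive-dimensional component of the characteristic variety of $U$ passing through the identity. By Arapura's theorem \cite{arapura}, such a component is the pullback of the characteristic variety of a curve via an admissible holomorphic map $f:U\to C$ with connected generic fibre, where $C$ is a smooth curve with $\chi(C)<0$. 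Moreover, by the Catanese--Bauer ``Castelnuovo--de Franchis'' type argument (\cite{catanese2000}, \cite{catanesefibred}), the surjection onto $F_r$ factors up to finite index through $f_*$. Since $X$ is simply connected, the compactification $\bar C$ has $H^1(\bar C)=0$ (any holomorphic $1$-form on $\bar C$ would pull back to $X$ after resolving indeterminacy). Hence $\bar C=\PP^1$ and $C=\PP^1\setminus\{p_0,\dots,p_s\}$ with $s\ge r$. The map $f$ extends to a rational map $X\dashrightarrow\PP^1$, i.e. a pencil, and the fibres over the $p_j$ are contained in $D$.

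Next I use hypothesis (a) to show that $D$ is exactly a union of fibres and that they are reduced. Each component $D_i$ of $D$ either maps dominantly to $\PP^1$ or is contained in a fibre $f^{-1}(p_j)$. In the dominant case a meridian of $D_i$ maps under $f_*$ to a loop in $C$ that is trivial (a small disk transversal to $D_i$ maps into $C$), so it dies in $\pi_1(C)$. But by (a) its image under $\phi$ is a nontrivial generator, and I expect compatibility of $\phi$ with $f_*$ to give a contradiction. To get this compatibility I would argue that the component of the characteristic variety determined by $\phi$ equals $f^*$ of that of $C$, so the local systems trivial on meridians of $D_i$ are the same for both. The components inside fibres: a meridian of a component of multiplicity $m$ in $f^*(p_j)$ maps to the $m$-th power of the loop around $p_j$. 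Since by (a) it maps to a primitive element $x_i$ or $(x_1\cdots x_r)^{-1}$, we get $m=1$, so the fibres contained in $D$ are reduced. Conversely, every $f^{-1}(p_j)$ lies in $D$ because $f$ is defined on $U$ with values in $C$. Thus $D$ is the union of the reduced members $f^{-1}(p_j)$.

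Finally, for the base points, hypothesis (b) says that at each base point the members through it are smooth and meet transversally, so distinct members have local intersection number $1$ there. Blowing up once at each base point therefore separates all members, since their strict transforms have distinct tangent directions, and the pulled-back pencil becomes base point free. One point needs care: a base point may lie on only one component of $D$ yet still be a base point of the pencil. In that case the argument is the same, applied to the generic members, whose local intersection multiplicity is then forced to be $1$ because $D$ contains at least two members through every base point.

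The main obstacle I expect is the middle step: showing rigorously that no component of $D$ is horizontal, and that the map $f$ produced by Arapura's theorem actually corresponds to the given $\phi$, rather than merely to some pencil. I would first try to realize $\phi$ itself as $f_*$ composed with an isomorphism, using the Catanese factorization for surjections onto free groups of rank $\ge 2$. Then I would use condition (a) on the images of meridians to match the punctures $p_j$ with the generators $x_i$ and $(x_1\cdots x_r)^{-1}$.
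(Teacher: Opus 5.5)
Your proposal follows the paper's proof step for step: Arapura's admissible map $f\colon X\setminus D\to C\subset\PP^1$ is extended to a pencil, hypothesis (a) excludes non-vertical components and multiple fibres, and hypothesis (b) gives the single blow-up; the compatibility you single out as the main obstacle is handled in the paper exactly as you suggest, by identifying $\phi^*\mathrm{Char}(F_r)$ with $f^*H^1(C,\CC^*)$, and in fact the inclusion $\subseteq$ already gives $\phi_{ab}=L\circ f_{ab}$ for some $L\colon H_1(C,\ZZ)\to\ZZ^r$, which yields both the horizontality contradiction and $m=1$ by primitivity of the images of $x_i$ and $(x_1\cdots x_r)^{-1}$ in $\ZZ^r$. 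Two small repairs are needed: a component of $D$ might a priori lie in a fibre over a point of $C$ rather than over some $p_j$, which is excluded by the same null-homotopic-meridian argument; and (b) gives smooth transversal branches rather than smooth members at a base point, but one blow-up still suffices because the $\ge 3$ members contained in $D$ all pass through every base point, which forces equal multiplicities there and pairwise disjoint tangent cones (so your ``single component'' case cannot occur).
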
 

\begin{rem}\label{arrangconnection} 
  Expanding on the Introduction, one can spell out the connection between the last Proposition and the Proposition 7.3 in \cite{libyuz}. Let $\A$ be an arrangement of $n(r+1)$ lines such that a) the incidence matrix defined in \cite{libyuz} consists of $(r+1)$ blocks each corresponding 
to a group of $n$ lines in general position,  b) any point of the arrangement having multiplicity greater than 2 contains one  line from each of $r+1$ groups
and c) each line contains $n$ points of of multiplicity greater than 2.
It was shown in \cite{libyuz} that this condition implies the existence of the pencil, having several 
members, each being a union of lines, giving a surjection of $\pi_1(\PP^2\setminus \A)$ onto a free group. One immediately sees that conditions (a),(b) in Prop. \ref{pencils} are equivalent to conditions a),b),c) in Prop. 7.3 in \cite{libyuz}.
Conversely, Proposition \ref{pencils} implies that surjection onto non-abelian free group induces a pencil on $\PP^2$ with $\A$ being the union of its reducible members. We obtain a partition of $\A$ into groups, satisfying the conditions a),b), c),
each consisting of the irreducible components of a particular member of this pencil.  

The conclusion of the Proposition 7.3 in \cite{libyuz}, using improved in \cite{stipins} and \cite{yuz4} inequlity $r+1 \le 4$, can be restated in terms of the fundamental groups: if $\pi_1(\PP^2\setminus \A)$ has surjection onto $F_r, r>3$ then the subarrangement of lines which meridians have nontrivial image in $F_r$ is a union of concurrent lines, since the pencil corresponding to such surjection must be a pencil of curves of degree 1.  Moreover, if $r=3$,assuming well known conjecture that the only arrangement as in Proposition 7.3 in \cite{libyuz}
is Hesse arrangement, one could conclude that if a fundamental group of an arrangement has surjection onto $F_3$ satisfying (a),(b) in Prop. \ref{pencils}, then this group admits a surjection onto 
the Hesse group which is a semidirect product of $F_3$ and $F_{10}$.  
\end{rem}

\begin{proof} Note that pull back via $f$ of characters of $F_r$ induces a subtorus in ${\rm Char}(\pi_1(X\setminus D)$ consisting of characters $\chi$, such that 
the cohomology of the corresponding local system on $X\setminus D$ are not vanishing. 
Recall that a map $\X \rightarrow \C$ of quasiprojective manifold $\X$ onto a curve  $\C$ is called {\it admissible} (cf. \cite{arapura}) if there is an extension $\hat \X \rightarrow \hat \C$ with connected fibers onto a smooth compactification of $\C$.  
It follows from [1], Prop.1.7 and Cor.1.9 that there is an admissible map of $f_{X\setminus D}: X\setminus D \rightarrow \PP^1\setminus D'$ where $D'$ is a finite subset on $\PP^1$ such that $f_{X\setminus D}^*(H^1(\PP^1\setminus D'))=f^*(Char F_r)$ (in particular the target of extension is $\PP^1$).
 Let $\hat f_X: \hat X\rightarrow \PP^1$ be an extension.
This map produces the pencil in complete linear system $f^*_{\hat X}(\CO_{\PP^1}(1))$ and if it has fixed components which union is $H$, then the corresponding pencil in the complete linear system corresponding to $f^*_{\hat X}(\CO_{\PP^1}(1))\otimes \CO(-H)$ is pull back of the pencil on $X$ having a finite set of base points $B$ with corresponding 
rational map $f_X: X\rightarrow \PP^1$ being regular on $X\setminus B$. Due to assumption (a), restriction of $f_X$ to a components of $D$ cannot be a dominating
map onto $\PP^1$, since the meridian of such components will have trivial image in $F_r$, i.e. all components of $D$ are components of  members of the pencil on $X$. Also, since a map of a regular neighborhood of fiber of its morphism onto a disk $\Delta$ having multiplicity $m$ takes the meridian to 
the $m$-th power of positive generator of $\pi_1(\Delta^*)$,  we obtain the reducibility assertion. Moreover, (b) implies that $\hat X$ is the 
blow up of $X$ at $B$. 
\end{proof} 

\begin{rem} The condition (a), in terminology of \cite{abcov} is equivalent to the condition that the component $f^*_{X\setminus D}({\rm Char} F_r)$ of the characteristic variety of $X\setminus D$ 
is essential but (a) states it directly in terms of the fundamental group. 
Specifically, existence of  surjection of $\pi_1(X\setminus D) \rightarrow F_r$ implies that for any curve $D' \subset X$ one has also surjection 
$\pi_1(X\setminus D\cup D') \rightarrow \pi_1(X\setminus D) \rightarrow F_r$. If the components of $D'$ satisfy the conditions  (a) and (b)  then 
this composition corresponds to composition of maps $X\setminus D\cup D' \rightarrow X\setminus D \rightarrow \PP^1 \setminus [r+1]$. 
We always assume that the pencil is essential (relative to the curve $D$) in the sense that the map $X\setminus D \rightarrow \PP^1$ is not a restriction of the map of the complement to a curve with irreducible components being a proper subset of $D$ \footnote{Essential pencils correspond via correspondence to referred to in Prop. \ref{pencils} to essential components of characteristic varieties cf. Definition \ref{threshold}}
.  For example for Hesse arrangement, we would not allow to use the map which is 
a linear projection from one of its quadruple points. Hence the condition (a) simultaneously assures that the corresponding map of $X$ has only reduced fibers and for no union $D'$ of components of $D$ the map $X\setminus D\rightarrow \PP^1\setminus [r+1]$ inducing surjection of the $F_r$ is restriction of the
map of $X\setminus D'$ inducing the same surjection.
\end{rem}

 Next we recall the results of showing the finiteness of pencils (up to equisingular deformations cf. Remark \ref{toptypesofpencils} below) of curves 
with classes of irreducible components in a saturated set and with fundamental groups 
of the complement having a free quotient of sufficiently large rank.
 Recall that a collection of effective divisors $\nabla \in Eff(X)$ is called saturated if for any  pair of effective divisors $D_1,D_2$ such that $D_1+D_2 \in \nabla$  one also has $D_1\in \nabla$ and $D_2 \in \nabla$. 
 The key case is when $\nabla$ is the smallest saturated set consisting of classes of divisors
  that are irreducible components of curves in a very ample linear system $D$ and when 
 generic element of a pencil has class in $D$. We will assume that this is the case. 
In the case, of $\PP^2$, the set of curves in $H^0(\PP^2,\CO(d)), 1 \le d \le k$ for a fixed $k$ is saturated. For a scroll $F_e$, that is projectivization  
$\PP_{\PP^1}(\CO \oplus \CO(-e))$ (cf. \cite{hart}, Ch. 5, Section 2)
the classes containing irreducible curves are $aE+bF$ where $(a,b)=(1,0),(0,1)$ and  
$(a,b), a>0,b \ge ae$  (cf. \cite{hart}, Ch.V, Cor. 2.18). It particular it follows that the set $\nabla(D_k)=(E+lf, e\le l \le k, E,F)$ is saturated. 
 
\begin{thm}\label{paperwithjose} Let $X$ be a smooth simply connected
  projective surface. Let
$\nabla \subset NS(X)$ be a saturated subset of the effective cone. 
Let $D$ be a reduced curve having classes of its irreducible
components in $\nabla$ and satisfying conditions (a) and (b) in Prop. \ref{pencils}.
 Then there is a constant $M_2(X,\nabla)$ such that
if $\pi_1(X\setminus D)$ admits an essential surjection onto a free group $F_r$, 
where $r$ is any integer satisfying $r >M_2(X,\nabla)$, 
then irreducible components $D$ can be partitioned into $r+1$ groups with the following property.
The union   
 of irreducible components belonging to each group form a member of a pencil with generic element having 
 class  $\delta \in \nabla$. 

Moreover, there is a constant $M_1(X,\nabla) < 10$ such that for $r
>M_1(X,\nabla)$ there is only finite number
$N(X,r,\nabla)$ of isotopy classes of curves $D$
with components from $\nabla$ and 
admitting a surjection $\pi_1(X\setminus D)\rightarrow F_r$ but not composed of a pencil in
$H^0(X,{\cal O}(D))$ where $D$ is a divisor in a class  $\delta\in
\nabla$.
\end{thm}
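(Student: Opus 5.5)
First I will apply Proposition \ref{pencils}. Conditions (a) and (b) hold and the surjection $\phi:\pi_1(X\setminus D)\to F_r$ is essential, so there is a pencil $P=\{sA+tB\}$ on $X$, with generic member of some class $\delta\in NS(X)$, such that $D$ is a union of reduced members of $P$. Surjectivity onto $F_r$ means the admissible map $X\setminus D\to \PP^1\setminus D'$ misses at least $r+1$ points. By essentiality, every member of $P$ contained in $D$ is actually used. So the components of $D$ split into $r+1$ groups, each group being the set of components of one member $C_j$ of $P$. Since every component lies in $\nabla$, the remaining task is to show $\delta\in\nabla$ once $r$ is large. Because $\nabla$ is saturated, it suffices to find one member of $P$ that is irreducible, or whose class is itself in $\nabla$. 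The reduction is the step that makes the theorem look plausible: the first conclusion amounts to showing that a pencil whose class lies outside $\nabla$ has only boundedly many members all of whose components lie in $\nabla$.

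To get the bound $M_2(X,\nabla)$ I will count singular fibres. Let $\hat X$ be the blow-up of $X$ at the base points of $P$, which is a fibration by Proposition \ref{pencils}. If $\delta\notin\nabla$, then every $C_j$ is reducible, because an irreducible member has class $\delta$. Each reducible reduced fibre $\hat C_j$ contributes at least $(\#\text{components})-1\ge 1$ to $\rho(\hat X)-2$, by the Zariski lemma / Shioda–Tate type count for fibrations. This gives
\[
\sum_j(\#{\rm comp}(C_j)-1)\le \rho(X)+\delta^2-2 .
\]
The right-hand side depends on $\delta$, so I still need to bound $\delta$. Write $\delta=\sum_i \gamma_{ij}$ with $\gamma_{ij}\in\nabla$. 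The members $C_j$ are pairwise disjoint off the base points, so components from different fibres satisfy $\gamma_{ij}\cdot\gamma_{i'j'}$ equal to the sum of their intersections at base points. With $r+1\ge 3$ different decompositions of the same class $\delta$, I would use the Hodge index theorem, pairing against an ample $H$, to bound $\delta\cdot H$ in terms of $\max_{\gamma\in\nabla}\gamma\cdot H$ and $\rho$. Alternatively, I would use the Euler characteristic bound
\[
e(\hat X)=e(\PP^1)e(F)+\sum_j(e(\hat C_j)-e(F)),
\]
since each fibre with several components that meet transversally contributes positively. Either route makes the number of reducible fibres bounded by a constant $M_2(X,\nabla)$. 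I expect this bound to be the main obstacle, specifically making it independent of $\delta$ when $\nabla$ is infinite. There I would rely on the fact that, in the setting fixed above, $\nabla$ is generated by components of a fixed very ample system, so it is finite modulo numerical equivalence.

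For the second assertion, take $r$ above $M_1$. If $D$ is not composed of a pencil with class in $\nabla$, then $\delta\notin\nabla$, and the previous step bounds both $\delta$ and the number of fibres involved. The pencils with bounded class and a prescribed fibre configuration form a constructible family in the Grassmannian of lines in $|\delta|$. Stratifying it by equisingularity type gives finitely many strata, and curves within one connected stratum are isotopic. This yields the finiteness of $N(X,r,\nabla)$. The bound $M_1<10$ would come from the Euler characteristic count: a reducible fibre whose components meet transversally has at least two nodes-worth of excess, which bounds the number of such fibres by a universal constant for the range that matters.
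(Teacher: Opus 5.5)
Your reduction via Proposition \ref{pencils} to a pencil of class $\delta$ whose $r+1$ marked members $C_j$ are completely decomposed is fine. So is your finiteness step once $\delta$ is bounded (finitely many equisingular strata of lines in $\PP(H^0(X,\CO(\delta)))$, as in Remark \ref{toptypesofpencils}). The gap is the core estimate: you never obtain a bound on the number of completely decomposed members that is independent of $\delta$, and neither route you sketch gives one.

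The Picard count $\sum_j(\#{\rm comp}(C_j)-1)\le\rho(X)+\delta^2-2$ is correct but grows with $\delta$. For pencils of plane curves of degree $d$ whose members are unions of lines, it only yields $r+1\le d+1$.

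Your proposed repair, bounding $\delta\cdot H$ by the Hodge index theorem from $r+1\ge 3$ decompositions of $\delta$, is false. The Fermat pencils $\lambda(x^d-y^d)+\mu(y^d-z^d)$ have three members that are unions of lines for every $d$; the paper uses these pencils, pulled back to scrolls, in the remark after Theorem \ref{keybound}. So $\delta$ cannot be bounded for small $r$, and any bound must use $r$ quantitatively. Finiteness of $\nabla$ modulo numerical equivalence does not help, since $\delta$ is a sum of an unbounded number of classes from $\nabla$. For the same reason, a fixed excess per reducible member (your ``two nodes-worth'') only bounds their number by $(e(X)+3\delta^2+2K_X\cdot\delta)/2$. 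That is not a universal constant and does not explain $M_1<10$.

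The missing idea is the quantitative Euler characteristic comparison. The paper quotes this theorem from \cite{cogome} rather than proving it, but the method it follows is the one in the proof of Theorem \ref{keybound}, inequality (\ref{keyinequality}).

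With $\delta^2$ simple base points, $\sum_s(e(C_s)-e(C_{gen}))=e(\hat X)-2e(C_{gen})=e(X)+3\delta^2+2K_X\cdot\delta$. The sum runs over singular members, and all terms are non-negative. For a reduced member $C_j=\sum_i\gamma_{ij}$ the term is its total Milnor number, and $\mu(C_j)\ge I_j:=\sum_{i<i'}\gamma_{ij}\cdot\gamma_{i'j}=\frac12(\delta^2-\sum_i\gamma_{ij}^2)$. So the excess of each completely decomposed member grows at the same rate as the total budget.

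Hence $(r+1)\min_j I_j\le e(X)+3\delta^2+2K_X\cdot\delta$, and the ratio of the two sides stays bounded as $\delta$ varies. For lines in a degree $d$ pencil it is $6(d-1)/d$. On scrolls it is the function $K(a,b,e,k_F)$ of Theorem \ref{keybound}, which is below $8$ for $a\ge 2$.

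The uniform bound on this ratio gives $M_2$. Its behaviour as $\delta$ grows is what bounds $\delta$ once $r>M_1$ and yields $M_1<10$, which is exactly the input your finiteness step needs.
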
 

\begin{rem}\label{toptypesofpencils} Consider a complete linear system $\PP(H^0(X,\CL))$ of sections of a line bundle $\CL$. It admits an
equisingular stratification with finitely many strata formed by Zariski-open sets consisting of divisors with the same multiplicities 
and topological type of irreducible components as well as the local 
types of singularities (cf. \cite{handbook}).  Each such stratum has at most finitely many connected 
components. Two pencils of curves in this linear system, i.e. two lines $P_1,P_2$ in $\PP(H^0(X,\CL))$ are 
called equisingularly isotopic if there is a continuous deformation $P_t $ in the Grassmanian of lines in  $\PP(H^0(X,\CL))$ 
parametrized by a disk, such that for all $t$, 
the set of strata that $P_t$ intersects and the number of such intersections are constant. 
From the finiteness of the number of strata in  $\PP(H^0(X,\CL))$ follows the finiteness of isotopy classes of pencils.
Also, a consequence of the Thom isotopy theorem is that two curves $D_1,D_2$, each formed by the unions of members of two isotopic pencils
i.e. $D_1=D_1^i, D_2=D_2^i, i=1,\cdots k$ such that $D_1^i$ and $D_2^i$   
belong to the same stratum for any $i$, have homeomorphic complements $X\setminus D_j, j=1,2$. Hence there are only finitely many fundamental groups of the complements
of curves which are unions of bounded number of members of pencils in $\PP(H^0(X,\CL))$, each member having irreducible components in $\nabla$. 
This gives explicit construction and enumeration of arrangement of curves corresponding to a saturated subset $\nabla$. 
Moreover, any such arrangement is one of the arrangements $\A^N(\S_1,\cdots \S_k)$ introduced in Definition \ref{generalceva}.



An interesting problem is to find sufficient conditions for a collection of strata admitting a collinear points.  
We will see below that there are many interesting examples on scrolls and saturated sets $\nabla$ admitting arrangements 
with components in a proper subset of $\nabla'$.
\end{rem}

\begin{dfn}\label{threshold}  Let $X$ be a simply connected projective surface and let $\nabla$ be a saturated family of effective divisors.

1. Let $D$ be a curve with classes of irreducible components in $\nabla$. 
A surjection $\pi_1(X\setminus D) \rightarrow F_r$ satisfying the condition (a) (resp. (b)) in Prop. \ref{pencils}
is called {\it essential} \footnote{this terminology is consistent with the one used in \cite{handbook}  Def. 4.14 .} (resp. {\it transversal}).
In agreement with this terminology, it is convenient to refer to a pencil with reduced members $C_1,\cdots C_N$, as essential (relative 
to these members) and {\it transversal} if distinct members intersect transversally at the base points.

2. The smallest integer $M_2(X,\nabla)$ such that for all $r>M_2(X,\nabla)$, a curve satisfying the condition of the theorem  $\ref{paperwithjose}$ admitting a surjection on a free 
group $F_r$  must be a curve which is a union of $r+1$ members of a pencil all irreducible members being the curves in $\nabla$ is called a threshold. 

3. A group $G$ is called $r$-extremal for $(X,\nabla)$ if 
 $r>M_2(X,\nabla)$ 
 there exist a pencil of curves in linear system
of divisors in a class $\delta \in \nabla$, a curve $D$ which irreducible components are members 
of this pencil, and an essential and transversal surjection $\pi_1(X\setminus D)\rightarrow F_r$ 
such that 
 $$G=\pi_1(X\setminus D)$$
\end{dfn}

\begin{cor}\label{finitecollectionsgroups}Let $(X,\nabla)$ be as in Theorem \ref{paperwithjose} and Let $r>M_2(X,\nabla), t \in \NN$. There is  an integer $s(X,\nabla,r)$ and 
a finite collection of finitely generated and finitely presented groups $Q^i_r(X,\nabla), i=1,\cdots, s(X,\nabla,r)$  such that if $\pi_1(X\setminus D)$
admits an essential surjection onto a free group $F_r$ then there exist $1\le i \le s(X,\nabla, r)$ such $\pi_1(X\setminus C)=Q_r^i(X,\nabla)$.
\end{cor}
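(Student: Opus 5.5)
The plan is to combine Proposition \ref{pencils} and Theorem \ref{paperwithjose} with the finiteness argument of Remark \ref{toptypesofpencils}. Let $D$ have components with classes in $\nabla$, and let $\phi:\pi_1(X\setminus D)\rightarrow F_r$ be an essential surjection with $r>M_2(X,\nabla)$. We read the statement as also assuming transversality, as in the definition of $r$-extremal groups; the assumption is needed to apply Proposition \ref{pencils}.

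\emph{Step 1.} By Proposition \ref{pencils}, $D$ is a union of reduced members of a pencil $P$ on $X$, and the blow up of $X$ once at each base point resolves $P$. Essentiality means no component of $D$ can be discarded. So the members of $P$ contained in $D$ are exactly the $r+1$ fibers over the punctures of $\PP^1\setminus[r+1]$.

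\emph{Step 2.} Since $r>M_2(X,\nabla)$, Theorem \ref{paperwithjose} and the definition of the threshold show that the generic member of $P$ has a class $\delta\in\nabla$. Consequently $P$ is a line in $\PP(H^0(X,\CO(\delta)))$.

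\emph{Step 3.} We need $\nabla$ to contribute only finitely many linear systems. If $\nabla$ is finite, as in all cases of interest here (for example $\nabla(D_k)$, or plane curves of degree at most $k$), this is automatic. For general $\nabla$, we would argue that the number of members and their intersections are controlled by $r$, which bounds $\delta$. We would not try to prove this in general: we would either assume $\nabla$ finite, or note that $s(X,\nabla,r)$ is only asserted to exist for each fixed $\nabla$ given as in the paper's setting.

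\emph{Step 4.} Apply Remark \ref{toptypesofpencils} to each of the finitely many complete linear systems $\PP(H^0(X,\CO(\delta)))$. That space has a finite equisingular stratification, and each stratum has finitely many connected components. Hence the configurations consisting of a pencil together with a choice of $r+1$ of its members fall into finitely many isotopy classes. These are recorded by which strata the chosen members lie in, how often each occurs, and the connected component of the corresponding incidence variety in the Grassmannian of lines with marked points. By the Thom isotopy theorem, curves $D$ in the same class have homeomorphic complements. This gives finitely many groups $Q^i_r(X,\nabla)$, and $s(X,\nabla,r)$ is the number of classes that are realized.

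\emph{Step 5.} Each such group is finitely presented, since $X\setminus D$ is a smooth quasi-projective variety and therefore has the homotopy type of a finite CW complex.

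The main obstacle is Step 4, specifically making the isotopy argument honest. Equisingular isotopy of the pencils must induce an isotopy of the pairs $(X,D)$. For this we would consider the space of pairs (pencil, $(r+1)$-tuple of members) with the members in prescribed strata and with constant base-point configuration. This space is constructible, so it has finitely many connected components. On each component the family of pairs $(X,D_t)$ is topologically locally trivial, by Thom's first isotopy lemma applied to a Whitney stratification of the total space. Transversality at the base points keeps the local type at the base points constant, and this is what the local triviality step requires.
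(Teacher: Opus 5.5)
Your proposal is correct and follows the route the paper intends. The paper states this corollary without proof, relying on Proposition \ref{pencils}, Theorem \ref{paperwithjose}, and the finiteness-of-strata plus Thom-isotopy argument of Remark \ref{toptypesofpencils}; your Step 5 supplies the finite presentability that the paper leaves implicit. Your insistence on having only finitely many classes $\delta$ is well placed and genuinely necessary: for $\nabla$ the whole effective cone of $\PP^2$, three smooth members of a generic degree-$d$ pencil give $H_1\cong\ZZ^2\oplus\ZZ/d$ for every $d$, so your fallback in Step 3 of bounding $\delta$ in terms of $r$ would fail. In Step 4, local triviality may only hold after a finite refinement of the parameter space (Hardt/Verdier generic triviality), but this does not affect finiteness.
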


In the next sections we will calculate the thresholds for polarized simply connected surfaces with the degree of discriminant
in the polarization not exceeding 12. In the Section \ref{smalldegree} we will show that those surfaces are precisely those of maximal degree 
(i.e. Fujita's $\Delta$-genus being zero).

\section{Bounds on the number of reducible fibers of pencils of scrolls 
with components from prescribed classes.}\label{boundsonnumber}

In this section we sharpen the results from \cite{cogome} in the cases 
scrolls and saturated sets consisting of irreducible components of all possible hyperplane sections.
Recall (cf. \cite{hart}), Ch. 5) that a scroll 
 is a projectivization of rank 2 bundle on $\PP^1$ embedded in projective space so that
the fibers of $\PP^1$-bundle over $\PP^1$ are lines. One can 
assume that this rank 2  bundle is $\CO_{\PP^1}\oplus \CO_{\PP^1}(-e), e \ge 0$ and denote such a surface as $F_e$. One has an 
identification $Pic(F_e)=\ZZ^2$ with 
generators being the class $E$ of a section such that $(E,E)=-e$ and class $F$ of a fiber. A divisor $aE+bF$ is ample (or very ample) iff $a>0, b>ae$. A  complete
linear system takes $F_e$ to a scroll iff $a=1$ and the map into  $\PP(H^0(F_e,\CO_{F_e}(D_k)), D_k=E+kF$ has as image the join in $\PP^{2k-e+1}$ of rational normal curves of degrees $k-e$ and $k$.  The image of $E$ (the directrix cf. \cite{GH}) is the only rational normal curve on the scroll having negative self-intersection. We denote such a scroll as 
$S_{k-e,k} \subset \PP^{2k-e+1}$ (cf. \cite{harris}). The rulings are lines that are the images of the fibers of $F_e\rightarrow \PP^1$.  The canonical class of $F_e$ is 
$-2E-(2+e)F$ and the class of tautological bundle $\CO_{\PP(\CO_{\PP^1} \oplus \CO_{\PP^1}(-e)}(1)$ is $E$. 

For a fixed $k$, the set of divisors $\nabla(D_k)=\{E,F,E+lF, e\le l \le k\}$ is saturated. The next theorem gives a bound on
the number of singular members in a pencil on $S_{k-e,k}$  with classes of irreducible components restricted to set $\nabla(D_k)$.
\begin{thm}\label{keybound} 
 Let $L_{a,b}, a\ge 1, b>ae$ denote a pencil of curves in complete linear system $aE+bF$ on polarized surface $(F_e,\CO_{F_e}(E+kF), k>e,e\ge 0$ 
  i.e. a line in 
$\PP(H^0(F_e, \CO_{F_e}(aE+bF)))$.  Then for 
\begin{equation}\label{excessbound} n>max(2(k+1)-e,7)
\end{equation}
and all $a,b$ with either $a>1$ or $b>k$, there are no pencils  $L_{a,b}$ admitting $n$ members relative to which $L_{a,b}$ is essential 
and transversal (cf. Def. \ref{threshold}) and having classes of 
all irreducible components of these members in $\nabla(D_k)$.

Moreover, the same property of the pencils $L_{a,b}$ takes place in the range $n>6$ if either $a\ge 3$, or 
\begin{equation}\label{exception}
(a) \ a=1 \ \  and \ \  b > {{7k-e} \over 5}  \ \ or  \ \ (b) \ \ a=2 \  \  and  \ \  b>{{14k-e-8} \over 12}
\end{equation}
 where $k_F$ is defined just before 
the function $K(a,b,e,k_F)$ is introduced in the proof below. 
For pencils $L(2,b)$ with $b\le 7k_F-e+4$ (resp. $L(1,b)$ with $b>{{8k-e}\over 6}$)  
this property takes place for $n >7$. Hence the only pencils $L(a,b), a \ge 1, b>ae$, having $n$ members with irreducible components in $\nabla(D_k)$
with $n$ satisfying inequality (\ref{excessbound}) are the pencils $L(1,b), b\le k$.
\end{thm}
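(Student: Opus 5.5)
I would bound the number of special members through an Euler-characteristic count for the morphism defined by the pencil. Let $\hat X\to F_e$ be the blow-up at the $(aE+bF)^2=2ab-a^2e$ base points. By Prop. \ref{pencils} and transversality these are simple, so $f:\hat X\to\PP^1$ is a morphism. Its generic fibre $C$ is a smooth curve of genus
$$g=1+\tfrac12\bigl((aE+bF)^2+K\cdot(aE+bF)\bigr)=(a-1)(b-1)-\tfrac{a(a-1)e}{2}.$$
Then
$$e(\hat X)=4+2ab-a^2e=2(2-2g)+\sum_{s}\bigl(e(C_s)-(2-2g)\bigr).$$
So the total ``excess'' $\sum_s\mu_s$ over singular fibres equals $4+2ab-a^2e+4g-4$. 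This number is fixed and depends only on $a,b,e$.

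The next step is to bound from below the excess contributed by each of the $n$ special members. Such a member is reduced and has every component in $\nabla(D_k)$. Its components are therefore rulings $F$, copies of $E$, and curves in $|E+lF|$ with $e\le l\le k$, all smooth and rational. The classes must sum to $aE+bF$, and the member must be essential, so it is not a single irreducible curve of class $\delta\notin\nabla$. I would write the member as $D=\sum C_j$ with $m$ components. Since all components are smooth rational, its excess equals the number of nodes counted properly, which is $\sum_{i<j}C_i\cdot C_j-(m-1)\ge g$. In other words, the excess of a member with rational components is at least $g+(\text{number of components})-1$. I would sharpen this by minimizing over decompositions. Since $\deg$ in $F$ of an $E+lF$ component is at most $k$, one needs at least $\lceil b/k\rceil$ components beyond the $a$ horizontal ones, unless $a$ copies carry all of $b$. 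Set $k_F$ to be this minimal number of vertical/horizontal pieces and $K(a,b,e,k_F)$ the resulting lower bound for the excess, $g+(a+k_F)-1$ corrected by the forced intersections.

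Then $nK(a,b,e,k_F)\le 4g+2ab-a^2e$, which gives $n\le(4g+2ab-a^2e)/K$. The main work, and the main obstacle, is the case analysis showing that this quotient is $\le\max(2(k+1)-e,7)$ whenever $a>1$ or $b>k$. For $a\ge3$ the genus grows like $ab$, so the ratio tends to $4+O(1/\cdot)$, and a direct estimate should give $\le6$. For $a=1$ we have $g=0$, the excess is just $2b-e+2$, and each member needs at least $\lceil b/k\rceil+1$ components, hence $\ge\lceil b/k\rceil$ nodes. Optimizing $b$ against $k$ then yields the linear thresholds $b>(7k-e)/5$ and $b>(8k-e)/6$ for $n>6$ and $n>7$ respectively. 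For $a=2$ the curve is hyperelliptic with $g=b-1-e$, and the same kind of linear inequality in $b,k$ produces $(14k-e-8)/12$. I expect the difficulty to lie in pinning down the minimal decomposition for $a=2$, where two sections $E+l_1F$, $E+l_2F$ meet in $l_1+l_2-e$ points, and in treating small $b$ near $k$ by hand. The final statement then follows because the only remaining pencils are $L(1,b)$ with $b\le k$, which are pencils of curves in $\nabla$.
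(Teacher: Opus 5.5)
Your strategy is the same as the paper's. You blow up the $2ab-a^2e$ simple base points and compare $e(\hat X)=4+2ab-a^2e$ with $2(2-2g)$ plus the Milnor excess of the fibres. You then bound the excess of each special member below by $g+m-1$, where $m=a+k_{F,s}$ and $k_{F,s}$ is its number of ruling components. This reproduces exactly the paper's inequality $x\,(ab-b+k_F-\frac e2 a(a-1))\le 6ab-3a^2e+2ae-4b-4a+4$. (Your sentence about nodes is garbled: $\sum_{i<j}C_i\cdot C_j-(m-1)$ \emph{equals} $g$, so a nodal member has $g+m-1$ nodes; for non-nodal members use $\mu_p\ge\delta_p$.)

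The gap is in the combinatorial input that converts this inequality into the stated numbers. Every member has exactly $a$ horizontal components, of class $E$ or $E+lF$ with $l\le k$. Hence its rulings number $k_{F,s}=b-\sum_i l_i\ge b-ak$. Your claim of ``at least $\lceil b/k\rceil$ further components'' is unjustified and false: for $a=1$, $b=k+1$, the member $(E+kF)+F$ has one ruling, not two. In addition, for $a=1$ the total excess is $e(\hat X)-4=2b-e$, not $2b-e+2$. With the extra $2$, even the correct ruling count would only give $x\le 2k+4-e$ at $b=k+1$. So with the quantities you wrote, neither $2(k+1)-e$ nor the thresholds $(7k-e)/5$, $(8k-e)/6$ come out. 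They follow only from $x\le (2b-e)/(b-k)$, which is decreasing in $b$ and equals $2(k+1)-e$ at $b=k+1$. It is $<7$ (resp. $<8$) exactly when $b>(7k-e)/5$ (resp. $b>(8k-e)/6$).

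For $a\ge 2$ your heuristic is also wrong. The ratio tends to $6+\frac{2}{a-1}$, not $4$, so at $a=3$ there is no slack and an exact estimate is required. Let $N$ and $D$ be the numerator and denominator above, and use only $k_F\ge 0$ and $b>ae$. For $a=2$ one gets $8D-N=8k_F+4>0$, so $x\le 7$. For $a\ge 3$ one gets $7D-N\ge (a-3)b-\frac12 a^2e+\frac32 ae+4(a-1)>0$, so $x\le 6$. Together with the $a=1$ case this proves the first assertion.

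For $a=2$, inserting $k_F\ge b-2k$ gives $x\le \frac{8b-8e-4}{2b-e-2k}$, which is $<7$ iff $b>\frac{14k-e-4}{6}$. The value $\frac{14k-e-8}{12}$ in condition (b) does not follow from this inequality. The paper reaches it through the numerator $2b-8e-8$, which disagrees with its own $K(2,b,e,k_F)=\frac{8b-8e-4}{b-e+k_F}$. So your assertion that ``the same kind of linear inequality'' yields (b) is unsupported.
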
 

\begin{rem} The above theorem makes explicit  the {\it trichotomy for pencils on scrolls} (cf. \cite{cogome}, \cite{handbook}). For $M_2(\nabla(D_k))=max({2(k+1)-e,7})$ the only 
pencils with $n>M_2(\nabla(D_k))$ reducible members having classes of irreducible components in $\nabla(D_k)$ are the 
pencils in linear systems from $\nabla(D_k)$. There are at most finitely many pencils with more than $M_1(\nabla(D_k))=7$ 
members with 
such property 
(notations for the constants are from Theorem \ref{paperwithjose}).The restrictions on the pencils assuring their finiteness given by (\ref{exception}). 
The inequalities in Theorem \ref{keybound} are not sharp, i.e. many $L_{a,b}$ with $n$ reducible members 
below the bounds described in the Theorem, do not exist, 
 but verifying this becomes more and more the study of special cases. 
 
 There may be infinitely many pencils
with $n \ge 3$. 
For example, the pull back of the pencils $a(x^d-y^d)+b(y^d-z^d)=0, d>1$ from a plane 
in $\PP^{2k-e+1}$ to the scroll $S_{k-e,k} \subset \PP^{2k-e-1}$
produces a pencil with 3 members with all irreducible components in $\CO(E+kF)$.  It would be interesting to obtain precise information 
about pencils $L(a,b)$  with irreducible components in $\nabla(D_k)$ for $n \le 7$ or $n>7$ but $(a,b)$ are outside of the range \ref{exception}, when the pencils are not composed of
the elements in a linear system with class in $\nabla (D_k)$.
\end{rem}

\begin{proof} The argument follows the idea used in \cite{cogome} but, dealing with concrete family of surfaces, we can be more specific. 
We use the bound  telling that the Euler characteristic of the blow of of $F_e$ at the base points of $L_{a,b}$ is at least the Euler characteristic of 
the sum of the Euler characteristics of the completely decomposable fibers and the data of fibration with degenerate fibers 
over the complement in the parameter space of $L_{a,b}$ to the set of reducible fibers. 

Consider the blow up $\tilde F_e$ of $F_e$ at the base points of $L_{a,b}$ and the corresponding morphism
$\pi_{a,b}: \tilde F_e\rightarrow \PP^1$.  The number of base points is $2ab-a^2e$. 
Let $x$ be the number of fibers of $\pi_{a,b}$ with al irreducible components having classes in  $\nabla(D_k)$. 
Let $C_{gen}$ be generic fiber of $\pi_{a,b}$ 
The bound is obtained from inequality (\ref{keyinequality}) below (cf. \cite{barth}, Ch. III, Prop. 11.4), where $e(C_{gen})$ and $e(C_{s})$ are the Euler characteristics of {\it non-intersecting} (due to transversality condition on the pencil) generic and a singular members respectively,  the summation 
is over the set $S$ of singular members with all their irreducible components in $\nabla (D_{k})$ and $ {\rm Card} S=x$: 
\begin{equation}\label{keyinequality}
     (2-x)e(C_{gen})+\sum_{s \in S} e(C_{s}) \le e(\tilde F_e)=4+2ab-a^2e
\end{equation}
Recall that (\ref{keyinequality}) is immediate consequence of additivity of the Euler characteristics combined with semi-continuity.
Indeed, the difference between the right and left hand sides  is the sum of the Milnor number of singularities of singular members of the pencils
with irreducible components that do not have the classes of irreducible components in $\nabla(D_k)$.

Since by Bertini's theorem (cf. \cite{hart} Remark 10.9.2) $C_{gen}$ is smooth, the adjunction formula yields that 
\begin{equation}\label{degreegenfiber}
-e(C_{gen})=deg K_{C_{gen}}=2ab-a^2e+ae-2b-2a
\end{equation}
Let us estimate $e(C_s)$ in terms of $a,b,e$ and the data $k_{F_s}, k_{i,s},\delta_E$ which we now introduce. The description of the set $\nabla(D_k)$ implies that irreducible components of $C_s$ are 
all rational curves and $C_s$ may have $k_{F_s}$ components that are the fibers of the scroll, $\delta_E=0,1$ components 
that is the directrix $E$ of the scroll, and the rest are irreducible components having classes in $E+k_{i,s}F, e \le k_i <k$. This data is subject to constraint:
\begin{equation}\label{constraint}
k_{F_s}+\sum_{i=1}^{a-\delta_E} k_{i,s}=b \ \ \ k \ge k_{i,s} \ge e \ \
\end{equation} 
The Euler characteristics of the curve $C_{s}$ depends only on the Euler characteristics of the components and 
the number of intersections of irreducible components of $C_s$. 
For a fixed set of components, $e(C_s)$ for non-transversal intersections is not smaller than for a curve in which the 
rational components of $C_s$ are smooth and intersect transversally. Hence (3) implies the same inequality when $C_s$ denotes 
the replacement of singular fiber by the curve with smooth components and transversal intersections which we will assume from now on. 

In the latter case the number of intersection points of the components of $C_s$
is given by sum  $\sum_{i<j}C_iC_j$ where irreducible components $C_i$ of the singular fiber $C_s$ assumed to be ordered according a fixed  
order of their classes in the sum $$C_s=\delta_EE+k_{F,s}F+\sum_{i=1}^{a-\delta_E}(E+k_{i,s}F)$$ 
This sum of intersections of the classes of components is given by (recall that $E^2=-e, EF=1, (E+k_iF,E+k_jF)=k_i+k_j-e$):
$$I_s=k_{F,s}a+\delta_E(-e(a-\delta_E)+b-k_{F,s})-{1\over 2}(a-\delta_E)(a-\delta_E-1)e+\sum^{a-\delta_E}_{i,j, i <j} (k_{i,s}+k_{j,s})
$$
Using the constraint (\ref{constraint}) one has $\sum^{a-\delta_E}_{i<j}(k_{i,s}+k_{j,s})=(a-\delta_E-1)(b-k_{F,s})$ and the upper bound 
the number of intersections of the components is 
\begin{equation}\label{numberintersections}
I_s=k_{F,s}a+\delta_E(-e(a-\delta_E)+b-k_{F,s})-{1\over 2}(a-\delta_E)(a-\delta_E-1)e+(a-\delta_E-1)(b-k_{F,s})
\end{equation}
The lower bound for the Euler characteristic of the singular fiber is 
\begin{equation}\label{lowerbound}
2(a+k_{F_s})-I_s
\end{equation}
Substituting (\ref{lowerbound}) using value of $I_s$ given by (\ref{numberintersections}), (\ref{degreegenfiber}) into (\ref{keyinequality})
and replacing each $k_{F,s}$ by $k_F=min_s k_{F,s}$, we have:
\begin{equation}
x(2ab-a^2e+ae-2b-2a+2(a+k_{F_s})-[k_{F_s}a+\delta_E(-e(a-\delta_{E})+b-k_{F_s})-{1\over 2}(a-\delta_E)(a-\delta_E-1)e+
(a-\delta_E-1)(b-k_{F_s}))
\end{equation}
$$(a-\delta_E-1)(b-k_F))]
\le 4+2ab-a^2e+2(2ab-a^2e+ae-2b-2a)
$$
After simplification (using $\delta_E^2=\delta_E$)
\begin{equation}x(ab-b+k_F-{e\over 2}a(a-1)) \le 6ab-3a^2e+2ae-4b-4a+4
\end{equation}
Hence one has the bound $x<K(a,b,e,k_F)$ where $k_F=min_s k_{F_s}$ and 
\begin{equation}\label{functionK}
   K(a,b,e,k_F)= {{6ab-3a^2e+2ae-4b-4a+4} \over {(ab-b+k_F-{e\over 2}a(a-1))}} \ \ a>1 \ or \  b>k \ and \  k_F<k 
\end{equation}
(the last inequalities are consequences of the assumption that $L_{a,b}$ is not in a linear system in $\nabla(D_k)$). 
For fixed, $a,e,k_F$, as function of $b$, $K(a,b,e,k_F)$ is increasing with supremum of its valued being $8$. 
Hence the largest integer value of $x$ is $7$. Moreover, for $a \ge 3$, $K(a,b,e,k_F) <7$ i.e. the largest integer value of $x$ is 6. 

In the remaining case $a=1$, observe that summation in (\ref{constraint}) has only one term,   
that a curve $C_s \in \vert E+bF \vert$ having components in $\nabla(D_k)$ has at least $b-k$ irreducible components with numerical class $F$
and therefore it follows from (\ref{functionK}) that 
\begin{equation} 
x \le {{2b-e} \over {k_F}} \le {{2b-e} \over {b-k}} \le 2(k+1)-e 
\end{equation} 
with equality for $b=k+1$. Moreover, for  
$b\ge {7k-e \over 5}$ one has $x \le 6$.  
Similarly, for $a=2$ the summation index $i$ in (\ref{constraint}) takes at most two values,  
$k_F \ge b-2k$ and $x \le {{2b-8e-8} \over {b-e+k_F}} \le {{2b-8e-8} \over {2b-e-2k}} <7$ if and only if $b>{{14k-e-8} \over 12}$.
 The claims of the theorem follow. 
\end{proof}

\section{Pencils of curves with reducible fibers and arrangements of hyperplanes}

For a smooth quadric $W$ in $\PP^3$, the collection $\nabla$ of classes consisting of $\CO_{W}(1)$ and the classes of fibers of the rulings is saturated.
For the linear system $\CO_W(2)$ the bound in Theorem \ref{keybound} gives $6$ as the maximal number of elements in the pencils in this linear system 
with components in $\nabla$ (indeed, in the notation of the Theorem \ref{keybound}, we consider the case $e=0$ and so for 
the pencil $L_{2,2}$ a possibility of having 6 reducible members is not excluded.  
Here we give an explicit construction of a pencil on a smooth quadric in $\PP^3$ and 
a linear system of curves cut by quadrics with 6 members 
that are unions of plane sections i.e rulings of $W$.

Consider the net of diagonal quadrics (cf. \cite{dolg}): 
\begin{equation}
  Q_1: \sum_0^3 x_i^2=0, \ \ Q_2: \sum_0^3 a_ix_i^2=0, \ \ Q_3: \sum_0^3 b_ix_i^2=0
\end{equation} 
The discriminant of the net of quadrics spanned by $Q_i$ is the union of four lines (quadrilateral). 
Consider the arrangement of twelve planes in $\PP^3$ formed by irreducible components of 6 quadrics $V_i, i=1,\cdots 6$ of rank 2, corresponding to 
the vertices of this quadrilateral. 
Let $W$ be a smooth quadric in the net and $L$ be a generic pencil in this net. Let $l_i$ be the pencil of quadrics in $\PP^3$ containing $W$ and $V_i, i=1,\cdots 6$.
$V_i\cap W$ is the base locus of $l_i$ consisting of a union of two hyperplane sections of $W$.
Notice that the zero set of the members  $L\cap l_i, i=1,\cdots 6$ of $L$ contain the base locus of the corresponding $l_i$  i.e.  is a union of two hyperplane sections of $W$. In particular, the pencil by induced by $L$ on  
$W$ has 6 reducible members $L\cap l_i$. 
Hence we obtain: 

\begin{thm} For a saturated collection $\nabla$ on a smooth quadric $W$ in $\PP^3$, consisting of hyperplane section and the lines there exists 
a pencil in $H^0(W,\CO_W(2))$ with 6 reducible members with components in $\nabla$. 
\end{thm}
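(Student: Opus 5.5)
The plan is to make the construction preceding the statement precise. I work in the net $\mathcal N=\langle Q_1,Q_2,Q_3\rangle$ of diagonal quadrics with generic $a_i,b_i$. A member $\sum_i(\lambda+\mu a_i+\nu b_i)x_i^2$ is singular exactly when one of the four linear forms $\ell_i=\lambda+\mu a_i+\nu b_i$ vanishes. So the discriminant of the net is the quadrilateral $\ell_0\ell_1\ell_2\ell_3=0$ in $\PP^2=\PP(\mathcal N)$.

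\textbf{Step 1: the six rank two quadrics.} For each pair $i<j$ there is a vertex $\ell_i=\ell_j=0$, giving a unique member $V_{ij}=\alpha x_k^2+\beta x_m^2$ with $\{k,m\}$ the complementary indices. For generic $a,b$ we have $\alpha\beta\neq 0$. Hence $V_{ij}$ is a union of two distinct planes $x_k=\pm\sqrt{-\beta/\alpha}\,x_m$.

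\textbf{Step 2: the surface and the pencil.} Fix a smooth member $W\in\mathcal N$, i.e. a point of $\PP^2$ off the quadrilateral. Choose a line $L\subset\PP^2$ not through $W$, avoiding the six vertices (generic). The restriction $H^0(\PP^3,\CO(2))\to H^0(W,\CO_W(2))$ has kernel spanned by the equation of $W$. Therefore the net restricts to a pencil $\bar{\mathcal N}$ on $W$, in which $W$ itself becomes $0$. Concretely, projecting from the point $W$ identifies the lines through $W$ in $\PP^2$ with members of this restricted pencil. The line $L$ maps isomorphically onto it, so the members of $L$ give exactly this pencil on $W$. It is nondegenerate because $Q_1,Q_2,Q_3$ are independent modulo the equation of $W$.

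\textbf{Step 3: the six reducible members.} The six lines $l_{ij}=\overline{W V_{ij}}$ are pencils of quadrics in $\PP^3$. Since $W$ is off the quadrilateral, distinct vertices generically give distinct lines $l_{ij}$. Each meets $L$ in a point $q_{ij}$, and $q_{ij}\equiv V_{ij}$ modulo $W$. Hence $q_{ij}\cap W=V_{ij}\cap W$, which is the union of two plane sections of $W$ with classes in $\nabla$. These six members are pairwise distinct, because distinct lines through $W$ give distinct members of the restricted pencil.

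It remains to check that each member really has components in $\nabla$. Each plane $x_k=c\,x_m$ with $c^2=-\beta/\alpha$ cuts $W$ in a conic or in two lines, and both cases have classes in $\nabla$. They are also reduced. The two plane sections are distinct since the planes are. To get more, such as transversality or avoiding tangent planes, I would check genericity for the chosen $W$, which also keeps the pencil from being composed of $\CO_W(1)$-sections.

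\textbf{Main obstacle.} The delicate point is nondegeneracy: the six members must be distinct and the pencil must not be a pencil of curves already in $\nabla$. Both follow from the projection identification and the genericity of $a_i,b_i$ and $W$. If needed, I would verify them with an explicit numerical choice, say $a=(0,1,2,3)$, $b=(0,1,4,9)$.
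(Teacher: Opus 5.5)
Your proposal is correct and follows the paper's construction. The paper's ingredients, which you reproduce, are the net of diagonal quadrics, the six rank-two members $V_{ij}$ at the vertices of the discriminant quadrilateral, and the observation that the member of $L$ on the line joining $W$ to $V_{ij}$ restricts on $W$ to $V_{ij}\cap W$, a union of two plane sections. Two precisions: the net restricts to a two-dimensional space on $W$ (the $Q_i$ are not independent modulo $W$, since $W$ lies in their span), and the six members are distinct only if $W$ avoids the three diagonals of the complete quadrilateral, not just the quadrilateral.
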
 

\begin{rem} Smooth members of the net above fail to be transversal to the strata of the arrangement of 12 planes. 
Though each line of multiplicity 3 in this arrangement contains 2 base points and hence 
is transversal to smooth quadric, and  
generic smooth member intersects transversally 2-dimensional strata of the arrangement, transversality fails for 0-dimensional ones given by the 
base points of the net. In particular, Lefschetz hyperplane section theorem does not provide a relation between the fundamental group of the complement to  the arrangement of planes in $\PP^3$ and the complement in $W$ of the union of the reducible members of the  pencil induced on $W$. 
The fundamental group of the latter complement is a semidirect product of free groups $F_5$ and the fundamental group of the  
 elliptic curve with 8 points removed i.e. $F_9$, as follows from exact homotopy sequence of a fibration.   
\end{rem} 

\begin{rem} For complete classification of Lefschetz pencils of type $(2,2)$ cf. \cite{hamada}.
\end{rem} 

\section{Linear systems with small degree of discriminant and stratification of dual variety of a scroll}\label{smalldegree}

\subsection{Polarized simply connected surfaces with degree of discriminant at most 12.}
\begin{prop} 
Let $L$ be a very ample line bundle on a smooth surface $X$, let $\Delta=L^2+2-dim H^0(L)$ be the Fujita's $\Delta$-genus of $(X,L)$ and  $g(X,L)$ be the arithmetic genus of 
an element in $L$. Let $D$ denote
the degree of the discriminant hypersurface in $\PP(H^0(L))$. Then 
\begin{equation}\label{degdisc1}
D=c_2+4(g(X,L)-1)+L^2=c_2+4g(X,L)+\Delta+{\rm dim} H^0(X,L)-6
\end{equation}
\end{prop}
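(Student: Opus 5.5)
We prove the formula for the degree of the dual variety of the embedding $X\subset \PP^N=\PP(H^0(L)^*)$ by counting singular members in a generic pencil. A generic line in $\PP(H^0(L))$ meets the discriminant hypersurface transversally in $D$ points. Each intersection point is a hyperplane section with exactly one ordinary node. Here we use that $L$ is very ample, so a generic pencil is a Lefschetz pencil; if the dual variety is not a hypersurface, the count gives $0$ and the formula still has to be read with that convention. So $D$ equals the number of singular fibres of a Lefschetz pencil, and the task reduces to computing that number topologically.

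First, blow up the $L^2$ base points of the pencil. This gives $\tilde X$ with $e(\tilde X)=c_2(X)+L^2$ and a morphism $\tilde X\to \PP^1$ whose generic fibre is a smooth curve $C$ of genus $g=g(X,L)$. Next, apply additivity of the Euler characteristic, as in the proof of Theorem \ref{keybound} (cf. inequality (\ref{keyinequality})). For a Lefschetz fibration, each nodal fibre has Euler characteristic $e(C)+1$, so
\[
e(\tilde X)=2e(C)+D=2(2-2g)+D .
\]
Therefore
\[
D=c_2+L^2+4g-4=c_2+4(g-1)+L^2,
\]
which is the first equality.

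The second equality is pure bookkeeping with Fujita's invariant. Substitute $L^2=\Delta+\dim H^0(X,L)-2$ into the first expression:
\[
c_2+4g-4+\Delta+\dim H^0(L)-2=c_2+4g+\Delta+\dim H^0(X,L)-6 .
\]

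The one step that needs care is justifying that a generic pencil is Lefschetz with exactly one ordinary node per singular member. This means the dual variety is a hypersurface whose generic point corresponds to a hyperplane tangent at a single point with a nondegenerate quadratic contact, and that a generic line meets the dual variety transversally at such points. I would cite the classical theory (reflexivity in characteristic zero; cf. \cite{kleiman}) for this. I would also note that in the degenerate cases where the dual variety is not a hypersurface (e.g. scrolls), $D$ should be interpreted via this Euler characteristic count or as the degree of the class of the discriminant. This is consistent with how the Proposition will be applied later.
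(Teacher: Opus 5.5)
Your argument is correct, but it takes a different route from the paper.

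The paper does not count singular fibres. It quotes from \cite{DL} the closed formula $D=c_2(X)+2K_X\cdot L+3L^2$, which is the standard class formula ($D=\deg c_2$ of the first jet bundle of $L$). It then rewrites this using adjunction $2g-2=K_X\cdot L+L^2$ and the definition of $\Delta$.

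You instead re-derive that formula. The Lefschetz count $e(\tilde X)=2e(C)+D$ on the blow-up at the $L^2$ base points gives $D=c_2+L^2+4g-4$ directly, and this agrees with the \cite{DL} expression after adjunction. Your version is self-contained and in the same spirit as the Euler-characteristic arguments the paper uses later (Theorem \ref{keybound}, and the multiplicity count in Theorem \ref{stratification}). Its cost is that it rests on the classical genericity facts (Katz, SGA 7): a general line in $\PP(H^0(L))$ meets the discriminant transversally at points whose hyperplane sections have exactly one ordinary node, and the axis of the pencil meets $X$ in $L^2$ reduced points. The paper's citation absorbs these facts.

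One side remark of yours is wrong, though it does not affect the proof. Surface scrolls are not dual-defective: the paper identifies their duals with resultant hypersurfaces of degree $2k-e$. For a smooth surface, the contact locus of a general tangent hyperplane is a linear space of dimension equal to the defect, and the parity theorem forces any positive defect to equal $2$. So the only non-hypersurface case is the linearly embedded plane $(\PP^2,\CO(1))$, where both your count and the formula give $D=0$.
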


\begin{proof} It follows from \cite{DL} that 
\begin{equation}\label{degdiscdolg}
D=c_2(X)+2K_XL+3L^2
\end{equation}
 Adjunction formula and definition of $\Delta$ give (\ref{degdisc1}).
\end{proof} 

\begin{cor}\label{smalldiscrimlist}  (a) If a polarized pair $(X,L)$ has the degree of discriminant not exceeding 12 then $\Delta=0$ or $1$.
If $X$ is simply connected with $D \le 12$ and $\Delta(X)=1$ then $X,L$ is a del Pezzo surface and the degree $D$ of discriminant is 12. 

(b) Complex polarized surfaces having $\Delta$-genus zero are, 
\begin{itemize} 
\item (i) $\PP^2, \CO(1)$, 
\item (ii) $\PP^2,\CO(2)$,
\item  (iii)
$V_2,\CO(1)
$. 
\item (iv) $(F_e,\CO(E+kF))$ where $F_e$ is a Hirzebruch surface, $E$ is its section with negative selfintersection, 
$F$ is the class of a fiber and $k>e$.  
\end{itemize} 
The degrees of discriminants are as follows (i) discriminant is empty, in the case  (ii) the degree is 3 and in the case (iii) it is 2. The surfaces in (iv) 
have both, the degree $L^2$ and the degree of the discriminant  equal to $D=2k-e$. 
\end{cor}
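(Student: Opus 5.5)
The plan is to work entirely from formula (\ref{degdisc1}), $D=c_2+4(g-1)+L^2=c_2+4g+\Delta+\dim H^0(X,L)-6$, together with Fujita's structure theory of polarized surfaces with small $\Delta$-genus (\cite{fujita0}, \cite{fujita1}, \cite{lanteri}). I would prove part (b) first and then use it to settle part (a).

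For (b) I take Fujita's classification of $\Delta=0$ very ample pairs as given. The smooth ones are the linear spaces, the Veronese surface, the quadric, and the rational normal scrolls; cones are excluded by smoothness. Only the degrees remain to be computed, each one a single substitution into (\ref{degdisc1}).
\begin{itemize}
\item For $(\PP^2,\CO(1))$ the discriminant is empty, since no line section is singular.
\item For $(\PP^2,\CO(2))$ we have $c_2=3$, $g=0$, $L^2=4$, so $D=3-4+4=3$.
\item For the quadric $V_2$ we have $c_2=4$, $g=0$, $L^2=2$, so $D=2$.
\item For $(F_e,E+kF)$ we have $c_2=4$, $g=0$, and $L^2=(E+kF)^2=2k-e$, so $D=4-4+2k-e=2k-e=L^2$.
\end{itemize}

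For (a) I want a lower bound on $D$ that grows with $\Delta$. The inputs are the following.
\begin{itemize}
\item Simple connectivity gives $b_1=b_3=0$, so $c_2=2+b_2\ge 3$.
\item Very ampleness gives $\dim H^0(L)\ge 3$, with $\dim H^0(L)\ge 4$ unless $(X,L)=(\PP^2,\CO(1))$.
\item Fujita's inequality $g\ge\Delta$ holds when $L^2\ge 2\Delta-1$. Here $L^2=\Delta+\dim H^0(L)-2\ge\Delta+2$, which exceeds $2\Delta-1$ whenever $\Delta\le 3$. For larger $\Delta$ I would invoke the Castelnuovo-type bound on $g$ for nondegenerate surfaces, which grows at least linearly in $\Delta$.
\end{itemize}
Substituting gives $D\ge 3+4\Delta+\Delta+4-6=5\Delta+1$. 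This exceeds $12$ as soon as $\Delta\ge 3$.

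For $\Delta=1$, Fujita's results say that $g=1$ once $L^2\ge 3$, so $(X,L)$ is a del Pezzo surface anticanonically embedded. Then $L^2=d$, $c_2=12-d$ and $g=1$, so $D=(12-d)+0+d=12$, exactly as claimed.

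The main obstacle is $\Delta=2$, where the crude bound only yields $D\ge 11$. Here I would use the finer classification of $\Delta=2$ pairs (Fujita, Ionescu, Lanteri): conic bundles and scroll-like surfaces with $g=2$, the quartic surfaces with $g=3$, and so on. For each case I would check directly that $D>12$ once $X$ is simply connected, using either $g\ge 3$ or $b_2$ large (a conic bundle with singular fibres has $c_2\ge 5$). This case analysis is the delicate step; everything else reduces to routine substitution into (\ref{degdisc1}).
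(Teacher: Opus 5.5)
Your plan follows the paper's: bound the terms of (\ref{degdisc1}) from below, take Fujita's $\Delta=0$ classification for (b), and identify $\Delta=1$ with anticanonical del Pezzo surfaces, where $D=(12-d)+d=12$. Those parts are fine. The gap is $\Delta=2$, which you leave to an unexecuted case analysis over the classification of $\Delta$-genus two surfaces. No such classification is needed. The missing idea is to remove the two boundary cases of your crude inputs $c_2\ge 3$ and $h^0(L)\ge 4$, which is what the paper does.

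\begin{itemize}
\item If $c_2=3$ then $b_2=1$, so the simply connected $X$ is $\PP^2$ (Yau). For $(\PP^2,\CO(m))$ one has $\Delta=(m-1)(m-2)/2$ and $D=3(m-1)^2$, so $\Delta\ge 2$ forces $m\ge 4$ and $D\ge 27$.
\item If $h^0(L)=4$ then $X\subset\PP^3$ is a smooth surface of degree $\Delta+2\ge 4$, whose dual has degree $d(d-1)^2\ge 36$.
\end{itemize}

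With $c_2\ge 4$ and $h^0(L)\ge 5$, your own chain gives $D\ge 4+4\Delta+\Delta+5-6=5\Delta+3\ge 13$ for all $\Delta\ge 2$. The paper uses only $g\ge 2$ at this point and gets $4+8+2+5-6=13$.

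Your justification of $g\ge\Delta$ also needs repair. The hypothesis $L^2\ge 2\Delta-1$ does not imply $g\ge\Delta$ in general. The elliptic quintic scroll in $\PP^4$ has $L^2=5$, $\Delta=2$, $g=1$; it also has $D=5$, which shows that simple connectivity is already needed in the first sentence of (a). Moreover, a Castelnuovo bound is an upper bound on $g$, so it cannot supply the lower bound you want for $\Delta\ge 4$.

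What holds, and is all you need, is an unconditional inequality for regular surfaces. For a smooth $H\in\vert L\vert$, the sequence $0\to\CO_X\to L\to L|_H\to 0$ together with $q(X)=0$ gives $h^0(X,L)=1+h^0(H,L|_H)$. Riemann--Roch on $H$ then yields $\Delta(X,L)=1+L^2-h^0(H,L|_H)=g-h^1(H,L|_H)\le g$. Combined with Clifford's theorem, the same identity justifies your claim that $\Delta=1$ and $L^2\ge 3$ force $g=1$: if $L|_H$ were special, Clifford would give $\Delta\ge L^2/2$.
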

\begin{proof} To show (a), notice that  if $\Delta \ge 2$ then also $g \ge 2$ (cf. \cite{fujita0}, \cite{fujita1}, \cite{lanteri}) Sec.1.1).
A simply connected surface with $c_2=3$ i.e. ${\rm rk} H_2(X,\ZZ)=1$ must be a homotopy projective plane and hence is biholomorphic to $\PP^2$ (cf. \cite{yau}).
Therefore we can assume $c_2\ge 4$. Since surfaces in $\PP^3$ with $\Delta \ge 2$ have the degree of discriminant greater than 12, we can assume that 
$dim H^0(X,L)\ge 5$.  Since for such simply connected surface $c_2 \ge 4$ and $dim H^0(X,L)\ge 4$  the right hand side in (\ref{degdisc1}) is at least 13. 
If $\Delta=1$ then either we have a del Pezzo surface (i.e. the polarization is by the anti-canonical bundle) and the degree of discriminant is 12 or one has  $g\ge 2$ and $L^2 \ge 6, c_2\ge 4$ (cf. \cite{fujita1}, Th.1) in which case the first equality (\ref{degdisc1}) shows that the discriminant is bigger than 12. 

(b) is the summary of Fujita's classification (or much earlier classical works) with the rest following from \cite{fujita0} Thm 2.1, 2.2, 3.8 and the discussion of ruled surfaces in \cite{hart} Ch 5. 
The last claim, i.e. the equality of the degree of a scroll and the degree of its dual variety was known classically \cite{salmon} and more recently observed in \cite{ragni} 
\footnote{G.Salmon \cite{salmon} vol.1 p.121 attributes to Cayley that the degree of a ruled surface in $\PP^3$ coincides with the degree of its dual. Salmon-Cayley's surfaces are generic projections of smooth scrolls in high-dimensional projective spaces. Since the dual of generic projection can be identified with 
generic linear section of dual of the smooth surface one sees the equivalence of Cayley statement to the one in the Corollary \ref{smalldiscrimlist}. 
The latter relation was well known classically for example in the works by  O. Zariski. For a modern exposition cf. \cite{yilong}.}
\end{proof}

\begin{cor}\label{atmost6} Complex polarized surfaces with degree of discriminant at most 5 (resp. 11) are the surfaces (i),(ii),(iii) and scrolls 
$S_{a,b}, a+b \le 5$ (resp. $S_{a,b}, a+b \le 11$)
\end{cor}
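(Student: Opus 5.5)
The plan is to derive Corollary \ref{atmost6} directly from Corollary \ref{smalldiscrimlist}, working in the same setting of very ample polarizations on simply connected surfaces. The first step is to reduce to $\Delta$-genus zero. Part (a) of Corollary \ref{smalldiscrimlist} says that a polarized pair whose discriminant has degree at most $12$ has $\Delta\in\{0,1\}$. It also says that if $\Delta=1$, then $(X,L)$ is a del Pezzo surface with discriminant of degree exactly $12$. Since the bounds in the statement are $5$ and $11$, both below $12$, the case $\Delta=1$ cannot occur, and only $\Delta=0$ remains.

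Next I would go through the list in part (b) of Corollary \ref{smalldiscrimlist} and check the discriminant degree in each case. For (i), $(\PP^2,\CO(1))$, the discriminant is empty. For (ii), $(\PP^2,\CO(2))$, it has degree $3$. For (iii), $(V_2,\CO(1))$, it has degree $2$. All three fall below both bounds, so all three belong to both lists. Strictly speaking (i) is degenerate, since its dual variety is not a hypersurface, but it is included by convention, as in the statement.

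For the scrolls in (iv), Corollary \ref{smalldiscrimlist} gives discriminant degree $2k-e$. Section 3 identifies $(F_e,\CO(E+kF))$ with $S_{k-e,k}\subset\PP^{2k-e+1}$. Writing $S_{a,b}$ with $a=k-e$ and $b=k$, we get $a+b=2k-e$. The condition "degree at most $5$" (resp. $11$) therefore becomes $a+b\le 5$ (resp. $a+b\le 11$). Here $a\ge 1$ because $k>e$, and every pair $1\le a\le b$ arises from $k=b$, $e=b-a$. This gives exactly the scrolls in the statement.

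I do not expect any real obstacle, because everything rests on Corollary \ref{smalldiscrimlist}. The one point to watch is the boundary case $\Delta=1$. It is excluded only because its discriminant degree is exactly $12$, and the bound $11$ is strictly less than $12$; this is why the statement uses $11$ rather than $12$. If needed, I would double-check the value $D=2k-e$ against formula (\ref{degdiscdolg}). On $F_e$ we have $c_2=4$, $K=-2E-(2+e)F$ and $L=E+kF$, which give $L^2=2k-e$ and $KL=e-2k-2$. Then $D=4+2(e-2k-2)+3(2k-e)=2k-e$, as expected.
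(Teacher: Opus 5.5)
Your proposal is correct and follows the paper's argument. The paper's one-line proof implicitly rests on the classification in Corollary \ref{smalldiscrimlist}, which leaves only $\Delta=0$ once $D\le 11$, and then reads off $D=2k-e$ for $S_{k-e,k}$ from (\ref{degdiscdolg}), which is exactly the check you carry out at the end. Your explicit restriction to simply connected surfaces is genuinely needed and matches the paper's standing assumption: a scroll over a curve of any genus has discriminant degree equal to its degree, so for instance the elliptic quintic scroll in $\PP^4$ has $D=5$.
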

\begin{proof} It follows from (\ref{degdiscdolg}) that degree of the discriminant of the scroll $(F_e,\CO(E+kF))=S_{k-e,k}$ is $2k-e$. 
This  immediately implies the claim of the corollary. 
\end{proof} 

\begin{rem} There are 30 scrolls with degree of discriminant at most 11 and the Corollary \ref{atmost6} gives a complete list, correspond to 
the solutions $a+b \le 11, a \le b$.
\end{rem}

\begin{rem} Fujita also considers {\it prepolarized} manifolds (cf. \cite{fujita0} Prop. 1.4) $(X,L)$
where $(X,L)$ without assuming $L$ being ample. 
Some have small degree of discriminant  (e.g.  $(F_1,\CO (E+F))$) and are not considered in this paper. 
\end{rem}

\subsection{Stratification of dual varieties of scrolls.} 

\begin{thm}\label{stratification} Dual variety of a scroll $S_{k-e,k} \subset \PP^{2k-e+1}$ admits a 
stratification with smooth strata  parametrizing hyperplanes intersecting $S_{k-e,k}$ 
along the the curves of the following types: 

(a) Strata $S_{\pi_s(l)}$, where $\pi_s(l)$ is a partition of $l, 1 \le l \le k-e$  into $s$ parts,  
with points corresponding to hyperplane sections that are unions of smooth curves in the linear system $E+(k-l)F$ and $s$ distinct fibers of $F_e$ 
with multiplicities $m_i, m_{i_1}+\cdots+m_{i_s}=l$) (i.e. $s$ is the length of partition of $l$). 

(b) Strata $T_{\pi_s(k)}$, where as in (a) $\pi_s(k)$ is a partition of $k, \sum_{i=1}^s m_i=k$ having length $s$, with points corresponding to hyperplane sections that are reducible curves which are a union of the exceptional curve of $F_e$ (the directrix) and $s$ distinct fibers (rulings) with multiplicities $m_i$.

These strata correspond to the strata of the discriminant of binary forms of degree $k$ in the case of the strata $T_{\pi_s(k)}$ and to the strata of 
the resultant hypersurface of polynomials of degree $k-e$ and $k$, in the case of $S_{\pi_s(l)}$.

2. The dimensions of these strata are given as follows
\begin{equation}
   dim S_{\pi_s(l)}= 2(k-l)-e+1+s, \ \ \ \  dim T_{\pi_s(k)}=s. 
\end{equation}
The adjacency is given by the refinement order of partitions (cf. \cite{stanley} Sect. 7.2)  i.e. a stratum corresponding to a partition $\lambda(k)$
is in the closure of a stratum $\mu(l), 1 \le l \le k-e$ iff $\mu(l)$, extended by $s(\lambda)-s(\mu)$ zeros and $\lambda(k) \ge \mu(l)$.

3. The multiplicity of a smooth point of a stratum  $S_{\pi_s(l)}$ (respectively stratum $T_{\pi_s(k)}$) is 
equal the sum of multiplicities $l$ (respectively $k$) in the partition. 

\begin{rem}
Claim (2) of the Theorem implies that $S_{1^2}$ is the only codimension 1 stratum of the discriminant of $\vert E+kF \vert$ (having as the normal slice an ordinary plane curve singularity of multiplicity 2)  and that there are two codimension 2 
strata: $S_{1^3}$ (normal slice is the normal crossing surface singularity of multiplicity 3) and $S_{2^1,1^1}$ (normal slice is the pinch point). 

Moreover, it can be seen that the canonical desingularization of the dual variety $S_{k-e,k}^{\vee}$ given by the projection of the incidence correspondence 
$I \subset S_{k-e,k} \times (\PP^{2k-e-1})^{\vee}$ on the second factor \footnote{the fibers of projection onto the first factor are the projective spaces parametrizing the hyperplanes containing the tangent plane at a point. In particular $I$ is smooth}
\begin{equation}
\begin{matrix} && I & & \cr 
                                                                     & \swarrow & & \searrow&  \cr
                                                                     S_{k-e,k} & & & & S_{k-e,k}^{\vee} \subset (\PP^{2k-e+1})^{\vee} \cr
\end{matrix}                                                                     
\end{equation}
has the explicit structure of the fibers 
depending on the strata of the above stratification. More precisely, the fibers over the a stratum $S_{\pi_s(l)}$ or $T_{\pi_s(k)}$
have dimension zero if and only if all parts are $1$ and the dimension $1$ otherwise.

 This resolution is a projective version of the morphism studied in (\cite{joun} (1.1) for  more general resultants.
 Note also that an explicit calculation of the stratum of codimension 2 (pinch point) for affine resultant $R_{2,2}$ was done in  \cite{choudary}.
\end{rem}

\end{thm}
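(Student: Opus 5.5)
The plan is to work on $F_e$, with the scroll embedded by $|E+kF|$, so that a hyperplane section is a divisor $C\in|E+kF|$. In the coordinates of $F_e=\PP(\CO\oplus\CO(-e))$, with fiber coordinates $(u:v)$ and base coordinates $(s:t)$, such a section is
\[ f=p(s,t)\,u+q(s,t)\,v ,\]
where $\deg p=k-e$ and $\deg q=k$. This identifies $H^0(\CO(E+kF))$ with pairs $(p,q)$, giving dimension $2k-e+2$, as required. The singular members are then classified by how $C$ decomposes.

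\textbf{Case $p\ne 0$.} If $p$ and $q$ share a common factor $g$ of degree $l$, then $C$ splits as the fibers $\{g=0\}$, counted with multiplicities, together with the residual curve $(p/g)u+(q/g)v$. The residual curve lies in $|E+(k-l)F|$ and is smooth: it is the graph of a section, because $p/g$ and $q/g$ have no common zero. Since $g$ divides $p$, we have $l\le k-e$. We also need $l\ge 1$ with some multiplicity at least $2$, or at least two fibers, for the curve to be singular. Indeed $C$ is singular exactly at the points where a fiber component meets the residual curve, or along multiple fibers. A single simple fiber meets the section once, at a node, so every reducible such $C$ is singular. This gives the strata $S_{\pi_s(l)}$.

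\textbf{Case $p=0$.} Then $C=E+\{q=0\}$, and $q$ factors into $s$ distinct roots with multiplicities $m_i$ summing to $k$. This gives the strata $T_{\pi_s(k)}$.

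These strata are visibly the strata of the resultant $\mathrm{Res}(p,q)$ (common roots with multiplicities) and of the discriminant of the binary form $q$. Smoothness of each stratum follows because it is the image of an injective immersion of a product of a configuration space of $s$ distinct points of $\PP^1$ and an open set of pairs $(p',q')$, with no common zero, modulo scalars.

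For the dimensions, we count parameters.
\begin{itemize}
\item For $S_{\pi_s(l)}$: the $s$ distinct points contribute $s$, and the pairs $(p',q')$ of degrees $(k-l-e,\,k-l)$ contribute $2(k-l)-e+2$ affine parameters. Projectivizing removes $1$, giving $2(k-l)-e+1+s$.
\item For $T_{\pi_s(k)}$: only the roots and a scalar appear, giving $s$.
\end{itemize}

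For adjacency, we degenerate. Colliding points merges parts of a partition. Letting $(p',q')$ acquire further common roots adds new parts or increases multiplicities. Sending $p'\to0$ moves us into the $T$-strata. Conversely, the closure relation is read off from the way root multiplicities can only increase or coincide in limits. These two directions together give the refinement order.

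For the multiplicities, the discriminant here is the dual variety, and I would compute its multiplicity at a general point of a stratum via the resultant. Take a transverse line $(p,q)+\lambda(p_1,q_1)$ through the point. The number of intersections absorbed at $\lambda=0$ is the order of vanishing of the discriminant of the family, which equals the sum of local contributions at the singular points of $C$. A fiber of multiplicity $m$ meeting the residual section contributes $m$, since $\mathrm{Res}$ vanishes to order $m$ at a common root of multiplicity $m$. The total is therefore $l$, respectively $k$ for the $T$-strata, where $E$ together with $q$ gives $\mathrm{Res}(0,q)$-type vanishing of order $\deg q$. As a consistency check, the degree of the dual variety is $2k-e$ by Corollary~\ref{smalldiscrimlist}.

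The main obstacle is making this multiplicity computation rigorous, in particular identifying the dual variety's equation with a suitable resultant or discriminant locally, and handling the nonreduced fibers. I would first try to express the defining polynomial of the dual variety as $\mathrm{Res}_{k-e,k}(p,q)$ times a correction factor, and then check the degrees against $2k-e$.
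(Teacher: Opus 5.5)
Your proposal is correct in outline. For (a), (b) and the dimension count it matches the paper; for claim 3 it takes a different route, and the obstacle you flag can be closed the way you propose.

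\textbf{Where you agree with the paper.} The paper also writes a hyperplane section as $sF_1(u,v)+tF_2(u,v)$ with $\deg F_1=k-e$ and $\deg F_2=k$; this is your $pu+qv$ with the variables renamed. It obtains the splitting off of rulings from the fact that a ruling through a singular point of $C$ would meet $C$ with multiplicity $\ge 2>F\cdot C=1$. You read the same splitting off from $\gcd(p,q)$. The dimension count, $2(k-l)-e+1$ parameters for the residual curve plus $s$ for the fibers, is the same. Adjacency is equally informal in both.

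\textbf{Claim 3: the real difference.} The paper substitutes $[C]-[C_{red}]=(l-s)F$ and $\mu(C_{red})=s$ into Aluffi's formula $m_C=([C]-[C_{red}])\cdot(K_X+2[C]+[C_{red}])+\mu$. That is a one-line computation, but it imports a nontrivial external result. Your resultant route is elementary and scroll-specific, and it goes through as follows.
\begin{itemize}
\item $\mathrm{Res}_{k-e,k}(p,q)$ has degree $k$ in the coefficients of $p$ and $k-e$ in those of $q$. It vanishes exactly on the dual variety, including the locus $p\equiv 0$. The dual is an irreducible hypersurface of degree $2k-e$, so $\mathrm{Res}$ is its reduced equation and there is no correction factor. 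If you use irreducibility of the resultant instead, you even reprove $\deg S^\vee_{k-e,k}=2k-e$.
\item At $(0,q_0)$ the lowest-order term of $\mathrm{Res}$ is $\mathrm{Res}(\delta p,q_0)\neq 0$, which has degree $k$. This gives multiplicity $k$ on the $T$-strata.
\item Near $(p_0,q_0)$ with $p_0\neq 0$, factor $p$ and $q$ into root clusters. Then $\mathrm{Res}$ is a unit times a product, over the common roots $\alpha_i$, of resultants of the local clusters, and these use disjoint sets of local coordinates.
\item The resultant of monic factors near $x^a$ and $x^b$ with $a\le b$ has multiplicity exactly $a=\min(a,b)$. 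To see this, reduce the degree-$b$ factor modulo the degree-$a$ one. The remainder $r$ has coefficients vanishing at the base point, and $\mathrm{Res}$ is the product of the $a$ values of $r$ at the roots of the degree-$a$ factor. Its leading form is the $a$-th power of the linear part of the constant coefficient of $r$, which is nonzero.
\item Since $\min(\mathrm{ord}_{\alpha_i}p_0,\mathrm{ord}_{\alpha_i}q_0)=m_i$, summing gives multiplicity $l$ at \emph{every} point of $S_{\pi_s(l)}$, not only at general points.
\end{itemize}

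\textbf{Three small repairs.}
\begin{itemize}
\item Localize on the base $\PP^1$, at the common roots of $p$ and $q$, not ``at the singular points of $C$''. For non-reduced $C$ the singular locus contains whole fibers, so Milnor-number localization does not apply.
\item Drop the sentence requiring ``some multiplicity at least $2$, or at least two fibers''. It contradicts the sentence after it and would exclude the open stratum $S_1$.
\item An injective immersion need not have smooth image: normalize a nodal cubic and delete one preimage of the node. For smoothness of the strata, add that on $\{\deg\gcd(p,q)=l\}$ the gcd is, up to scalar, the $l$-th subresultant, so it depends algebraically on $(p,q)$. With the multiplicity pattern fixed, its roots then vary continuously. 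Hence your parametrization is a homeomorphism onto its image, and being an immersion, it is an embedding.
\end{itemize}
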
 

\begin{proof} A hyperplane section of $S_{k-e,k}$ is singular if and only if it is a union of rulings and the residual curve that is transversal to the ruling. 
Indeed, if either a ruling is not a component of a hyperplane section or it is a component but the residual curve is not transversal to this rulings, then the intersection index of this ruling and the hyperplane section at the tangency point (which is singular point of the intersection) must be at least 2, while $(F,E+lF)=1$.
The first part of claim (a) follows since $H^0(F_e,\CO(E+(k-l)F))=2(k-l)-e+1$ for $k-l \ge e$ and hence each of $S_{\pi_s(l)}$ is non empty. 
Specifically, curves in this stratum can be constructed as follows. The dual of the directrix, that is a rational normal curve spanning a $\PP^{k-e}$, is a hypersurface in $(\PP^{k-e})^{\vee}$, can be identified with the discriminant of the spaces of binary homogeneous forms 
having degree $k-e$. A hyperplane, in $\PP^{k-e}$ intersecting the directrix at $s$ points having order of tangency $m_1,\cdots m_s, \sum m_i=l$, 
and a hyperplane in $\PP^k \subset \PP^{2k-e+1}$, containing rational normal curve of degree $k$ belonging to $S_{k-e,k}$, and having 
the same orders of tangencies at $s$ points corresponding to $s$ points on the directrix, span the linear subspace $\PP^{2k-e-1}$ of
 $\PP^{2k-e+1}$.Then the intersection of generic hyperplane in $\PP^{2k-e+1}$ containing this $\PP^{2k-e-1}$ with the scroll is a curve in the 
stratum $S_{\pi_s(l)}$ as described in the (a). In particular, assigning to a hyperplane corresponding to a point in $S_{\pi_s(l)}$ $s$ distinct points on 
the directrix gives a locally trivial fibration of this stratum over the (smooth) configuration space of $s$ points in $\PP^1$ with fiber that can be 
identified with the fibration over the space of hyperplanes in $\PP^k$ containing a subspace $\PP^{k-e-1}$ with fiber $\PP^1$ corresponding 
to hyperplanes in $\PP^{2k-e+1}$ containing $\PP^{2k-e-1}$ (spanned as described by hyperplanes in $\PP^{k-e}$ and $\PP^k$). 

The strata $T_{\pi_s(k)}$ consist of the points of dual variety 
corresponding to hyperplanes in $\PP^{2k-e+1}$ containing the span $\PP^{k-e}$ of the directrix of $S_{k-e,k}$  
and a hyperplane  $\PP^{k-1}$ in  $\PP^k \subset \PP^{2k-e+1}$ containing a rational normal curve of degree $k$ which belong to a  stratum of the dual
variety of the rational normal curve of degree $k$. As above, in the case of directrix, this dual hypersurface can identified with the discriminant hypersurface of the space of binary homogeneous forms of degree $k$ which in turn corresponds to a partition of $k$ \footnote{the argument here is close to the one in Prop. 1.5 in \cite{ciliberto} but the 
question addressed there is slightly different: for which $m$ any $m$ generic points on are the tangency points of a hyperplane}.
and the assertions in (b) follow. 

 For  convenience recall the relation between the dual variety of a scroll and 
 the resultant of binary forms of degree $k-e$ and $k$ (see \cite{kapranov} for a much general discussion of resultants). Consider scroll in $\PP^{2k-e+1}$ given by equations:
\begin{equation}\label{scrolleq}
 x_0=su^{k-e}, \cdots x_i=su^{k-e-i}v^i, \cdots x_{k-e}=sv^{k-e}, 
 \end{equation}
$$x_{k-e+1}=tu^k, \cdots x_{k-e+1+j}=tu^{k-j}v^j, \cdots x_{2k-e+1}=tv^k$$
Checking in local charts the smoothness of the intersection of a hyperplane $H_{{\bf a},{\bf b}}$  
given by $\sum_{i=0}^{k-e} a_ix_i+\sum_{j=0}^kb_jx_{k-e+1+j}=0$ with the scroll (\ref{scrolleq}) 
having the form 
$F(s,t,u,v)=sF_1(u,v)+tF_2(u,v)$ where
\begin{equation} 
F_1(u,v)=\sum_{i=0}^{k-e} a_iu^{k-e-i}v^i,  \ \ \ F_2(u,v)=\sum_{j=0}^kb_ju^{k-e+1-j}v^j=0
\end{equation}
one sees that for $t=u=1$ this intersection is singular iff $F_1(u,1)=F_2(u,1)=0$ and $s(F_1)'_u+(F_2)'_u=s(F_1)_v'+(F_2)'_v$. In particular  $(\cdots a_i,\cdots b_j\cdots)$ correspond to a point in the dual of the scroll iff it is in the resultant of $F_1(u),F_2(u)$. 
It is immediate that the line $(s,t)$-line $L_{u_0}$ corresponding to the common root $u_0$ of $F_1,F_2$ is an irreducible component of the intersection and 
that residual curve is transversal to this line. Moreover, the second differential is $sF_1+F_2$ at the intersection point of residual curve and $L_0$ has rank 1 iff $L_u$ has multiplicity 2. Continuing with the same calculations applied to ${1\over {u-u_0}}(sF_1(u)+F_u)$ i.e. arguing on the scroll $S_{k-e-1,k-1}$ one 
obtains an alternative argument for (a),(b). 

The formula for the dimension of the strata follows from the identity $dim \PP(H^0(F_e,\CO(E+(k-l)F)))=2(k-l)-e+1$ for $l \le k-e$ (cf. (\cite{coskun} Lemma 2.5) and the observation above showing that the data consisting of an element of this linear system,  the partition $\pi(l)$  
having length $s(\pi)$, and $s(\pi)$ intersection points of the multiple fibers with the directrix of the scroll specify uniquely the element of the stratum $S_{\pi_s(l)}$.  The argument for the strata $T_{\pi_s(k)}$ is the same.

The adjacency claim follows clear for pairs of strata of types $S,S$ and $S,T$. For pairs of  strata of type $S$ and $T$ since a union of an irreducible curve in linear system $E+(k-l)F$ and fibers of $F_e$ according to $\lambda(l)$, after generic degeneration of 
the former will be a curve in $T_{\mu_s(k)}$ corresponding to partition which the extension of $\lambda(l)$ given by $\lambda+1^{k-l}$. The adjacency of 
strata $T$ is given by classic order of partitions and the claim 2 follow. 

The claim 3 follows from \cite{aluffi} Proposition 2.1 expressing the multiplicity of a point on the discriminant of a very ample 
linear system on a surface $X$ corresponding to a singular, possibly non reduced, curve $C$ in this linear system
(and obtained, similarly to the calculations in section 3, as the degree of discriminant minus the number of singular members
in a generic pencil containing $C$ different from $C$). Let $C=\sum_i^s m_iC_i$ be a curve with irreducible components $C_i$ having respective multiplicities $m_i$, $C_{red}$ be the curve having $C_i$ as its irreducible components with multiplicity one and $\mu$ be the sum of Milnor numbers of all singularities of $C_{red}$. For a curve $G$, its class in $NS(X)$ we denote as $[G]$. Then  (cf. \cite{aluffi})
$$m_C=([C]-[C_{red}])(K_X+2[C]+[C_{red}])+\mu
$$ 
For a curve $C_{\pi_s(l)}$  in $E+kF$ corresponding to a smooth member of $S_{\pi_s(l)}$  $E+kF$, the class, one has 
$[C_{\pi_s(l),red}]=E+(k-l+s)F$ and $\mu=s$. In the case of the curve in the $T_{\pi_s(l)}$, one has the same class for the 
reduced curve as for a curve in $S_{\pi_s(l)}$ but $\sum \mu_i=s+k-l$. This shows the claim 3.

\end{proof}

\section{Extremal fundamental groups for surfaces with $\Delta=0$:  Quadrics and Veronese surface of degree 4.}. 

This section contains a calculation of presentations for the extremal groups (cf. Definition \ref{threshold}) for surfaces (i),(ii),(iii) in the list of the Corollary 
\ref{smalldiscrimlist}. The case (iv) is considered in the following section. 

\subsection{Extremal groups of $(V_2,\CO_{V_2}(1))$.}

It follows from Theorem \ref{keybound} that for saturated set $\nabla=(H,L_1,L_2)$ consisting of hyperplane sections and two rulings of a quadric ,  
any pencil containing more than 7 members, each being a union of divisors from $\nabla$, is a pencil of hyperplane sections. 
The dual variety of a smooth quadric $V_2\subset \PP^3$ is a smooth quadric. In particular there is only one equisingular type of pencils 
with reduced members having two singular members i.e. the line in dual space transversal to the dual quadric.  In particular,  for each $r > 6$, we have three $r$-extremal groups $P^i_r, i=0,1,2$
(cf. Definition \ref{threshold})  that are the fundamental groups of the complements to $r+1$ members of 
the generic pencil of hyperplane sections having $i$ singular hyperplane sections 
among $r+1$ deleted members.  

\begin{prop}\label{quadricextremal} One has the following isomorphisms: 
 \begin{equation} 
 P^2_r=\ZZ\times F_{r} \ \ P^1_r=F_{r}    \ \ P^0_r=F_{r}
 \end{equation}
\end{prop}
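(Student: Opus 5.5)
The plan is to identify each group $P^i_r$ with the fundamental group of a $\CC^*$-fibration over a punctured $\PP^1$, after blowing up the base locus of the pencil. Here $V_2\cong \PP^1\times\PP^1$ and hyperplane sections have class $(1,1)$ with $H^2=2$. A generic pencil of hyperplane sections therefore has two base points $p_1,p_2$. Its two singular members correspond to the two points where the pencil line meets the dual quadric; each is a union $L_1\cup L_2$ of two rulings, with one base point on each ruling, away from the node. By Proposition \ref{pencils}, after blowing up $p_1,p_2$ we get a morphism $\pi:\tilde V\to \PP^1$. Its fibres are smooth rational curves except for the two fibres $L_1\cup L_2$, and the exceptional curves $E_1,E_2$ are sections.

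Since every base point lies on $D$, we have $V_2\setminus D\cong \tilde V\setminus(\tilde D\cup E_1\cup E_2)$, where $\tilde D$ is the strict transform of $D$. Thus $V_2\setminus D$ maps onto $B=\PP^1\setminus\{r+1 \text{ points}\}$ and $\pi_1(B)=F_r$.
\begin{itemize}
\item Over a point of $B$ with a smooth fibre, the fibre of $V_2\setminus D\to B$ is $\PP^1$ minus two points, i.e. $\CC^*$, with generator $f$.
\item Over a point of $B$ with a singular fibre, the fibre is $L_1\cup L_2$ minus one point on each line. This is two copies of $\CC$ glued at a point, hence contractible.
\end{itemize}

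\emph{Case $i=2$.} Both singular members are deleted, so the map $V_2\setminus D\to B$ is a locally trivial $\CC^*$-bundle. The homotopy exact sequence, together with $\pi_2(B)=0$, gives
\[1\to \ZZ\to P^2_r\to F_r\to 1 .\]
It splits because $F_r$ is free. The monodromy acts trivially on $H_1(\CC^*)$: the fibration is holomorphic and the two punctures are cut out by the sections $E_1,E_2$, so they are not interchanged. Hence $P^2_r=\ZZ\times F_r$.

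\emph{Cases $i=1,0$.} First remove all singular fibres as well. The previous argument then gives a group $\ZZ\times F_{r+2-i}$, generated by $f$ and by loops $\gamma_j$ around the punctures of the base. Now glue back the $2-i$ singular fibres that are not deleted, using van Kampen. For each such fibre, take a tubular neighbourhood $N$ of the fibre minus the sections $E_1,E_2$. It deformation retracts onto the singular fibre minus two points, which is contractible. The intersection of $N$ with the rest of the complement is a $\CC^*$-bundle over a circle, with group $\ZZ^2=\langle f,\gamma_j\rangle$. Gluing in the contractible $N$ kills both $f$ and $\gamma_j$. Consequently
\[P^1_r=(\ZZ\times F_{r+1})/\langle\langle f,\gamma\rangle\rangle=F_r,\qquad P^0_r=(\ZZ\times F_{r+2})/\langle\langle f,\gamma_1,\gamma_2\rangle\rangle=F_r .\]

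The step I expect to need the most care is the local analysis at a non-deleted singular fibre. One must check that the base points lie on distinct components, away from the node, so that the fibre minus the sections really is contractible. One must also check that the boundary group of $N$ is exactly $\langle f,\gamma_j\rangle$, which requires the fibration to be a product near $\partial N$. As a consistency check, compute $H_1$ from the formula in Section 2: for $i=2$ the deleted members have $r+3$ irreducible components, and the image of $H_2$ has rank $2$, giving $H_1=\ZZ^{r+1}$, which matches $\ZZ\times F_r$.
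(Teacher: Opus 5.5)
Your proof is correct, but for the main step it takes a different route from the paper.

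The paper also blows up the two base points. It then contracts one line from each singular member, chosen so that the two lines meet different exceptional curves, and uses self-intersection numbers to recognise the result as $\PP^1\times\PP^1$. For $i=2$ the complement then becomes literally $(\PP^1\setminus\{2\ \text{points}\})\times(\PP^1\setminus\{r+1\ \text{points}\})$. For $i=1,0$ the paper refills singular members by killing the meridians of both components. To do so it identifies, through the contraction, the meridian of the contracted line with the meridian of the image of an exceptional curve. Refilling therefore kills the $\ZZ$ factor and one free generator.

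You instead stay on the blow-up and argue as follows:
\begin{itemize}
\item $V_2\setminus D$ fibres over $B=\PP^1\setminus\{r+1\ \text{points}\}$ with fibre $\CC^*$ over smooth members.
\item The punctures of the fibre are traced by the sections $E_1,E_2$, so the extension $1\to\ZZ\to P^2_r\to F_r\to 1$ splits with trivial action.
\item Refilling is done by van Kampen, using that a neighbourhood of a singular fibre minus the sections is simply connected.
\end{itemize}

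The paper's route yields more, namely a biregular product model for $i=2$ and hence the full homotopy type. It is also the same blow-up/blow-down template reused for scrolls in Theorem \ref{extermalgroupsgenericcase} and Proposition \ref{groupsTstratapencils}.

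Your route avoids the intersection-number bookkeeping and the choice of contraction. It also makes the refilling step more transparent than the paper's meridian/parallel identification. Its only geometric inputs are:
\begin{itemize}
\item each line of a singular member carries exactly one base point, away from the node, which you checked;
\item the Dehn twist about the vanishing cycle acts trivially on $\pi_1(\CC^*)$, so the boundary group is $\ZZ^2$;
\item the loops around the refilled critical values can be taken as part of a free basis of $\pi_1(B)$, which holds since at least one smooth member remains deleted.
\end{itemize}
For van Kampen you only need $\pi_1(N)=1$, not full contractibility of $N$.
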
 
\begin{proof} The base locus of a generic pencil of hyperplane sections of a quadric $V_2$ consists of 2 base points. Let $\tilde V_2$ denotes the blow up at these  points 
and $\tilde V_2\rightarrow \PP^1$ the map induced by the pencil. Each of 4 lines on $V_2$ forming a pair of singular members of the pencil contains 
exactly one base point of the pencil and hence on $\tilde V_2$ the proper preimage of each of these lines will have self-intersection index $-1$ while proper preimage of singular member remains to be a pair of intersecting rational curves. Proper transform of a smooth member of the pencil will have self-intersection $0$. 
Contracting two proper transforms of irreducible components singular members, one from each member, such that they intersect distinct exceptional curves (there are two choice for such contraction), we obtain again $\PP^1\times \PP^1$ (with generators linearly equivalent to images of exceptional curves 
and the proper transforms on $\tilde V_2$ of  smooth members of the pencil). The remaining irreducible components of proper transforms of singular members
after contraction become linearly equivalent to images of smooth members. Therefore the complement to the union of such $n$ members of generic pencil 
is biregular to $\PP^1\times \PP^1 \setminus (a \times \PP^1\cup b\times \PP^1 \bigcup_1^{r+1} (\PP^1\times p_i)$ which implies the first isomorphism. 

In the case when among missing $r+1$ members of the pencil of there is only one singular member then the complement can be viewed as the complement 
of the same pencil with missing $r+1$ members among which there are two singular members and one is filled in. The group of the complement before 
filling in was just  calculated. Filling in a singular fiber changes the  fundamental group by adding relations $m_i=1, i=1,2$ (cf. (\ref{killingmeridians}) and
Appendix, Theorem  \ref{ZvK})
where $m_i$ are the meridians of two
components of the singular member that is added. In birational transformation described above, the meridian of contracted component is isotopic to 
the parallel of remaining component \footnote{Locally, for the complement to the union of transversally intersecting smooth curves in their 
regular neighborhood, in the fundamental group of the complement, that is $\ZZ^2$ the standard generators are 
meridians of components but also can be viewed as push outs of circles in each of components i.e. the parallels.} that become the meridian of the  image of exceptional curve which was intersected by the contracted component. Hence, in this case, the complement has the fundamental group of a product of $\CC$
and $\PP^1$ minus $r+1$ points which shows the second isomorphism. The third one follows immediately by the same argument (but identification is 
with the fundamental group of product of $\PP^1$ and complement in  $\PP^1$ to $r+1$ points.  
\end{proof}

\subsection{Number of extremal groups on Veronese surface $(\PP^2,\CO(2))$}. 

In this section we consider  Veronese surface of degree 4 with saturated system 
$\nabla=([1,],[2]) \in Pic(\PP^2)$ i.e. the conic-line arrangements. 
We calculate the number of $r$-extremal groups for $r>6$ i.e. the 
fundamental groups of complements to arrangements of lines and quadrics in $\PP^2$, satisfying conditions (a),(b) in Prop. \ref{pencils}, admitting a free group of rank greater than $6$ as its quotient. This case of Veronese surface of degree 4 with saturated system 
$\nabla=([1],[2]) \in Pic(\PP^2)$ is the first after  $(\PP^2,[1])$ and is continuation of discussion started in section 4.2 in \cite{cogome} (where the threshold $6$ was obtained) and Sect. 5 in \cite{handbook}. Recall that in the of arrangement of lines the only extremal groups are the free groups (cf. \cite{libyuz}).
\begin{prop} 
There are 4 families of extremal groups $Q^i_{r}, i=0,1,2,3, r+1 \ge 6$  that are the fundamental groups of the complement to a 
union of $r+1>6$ quadrics in a pencil, among which $i$ quadrics are 
singular.  
\end{prop}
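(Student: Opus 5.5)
The plan is to reduce the statement to the classification of pencils of conics and then to the finiteness mechanism of Remark \ref{toptypesofpencils}.

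\emph{Step 1: reduction to a pencil of conics.} Let $D$ be a conic-line arrangement in $\PP^2$ whose group $\pi_1(\PP^2\setminus D)$ admits an essential and transversal surjection onto $F_r$ with $r>6$. By Proposition \ref{pencils}, $D$ is the union of $r+1$ reduced members of a pencil whose lift to the blow up at the base points is base point free. The threshold for $(\PP^2,\nabla=\{[1],[2]\})$ is $6$ (section 4.2 of \cite{cogome}). Hence by Theorem \ref{paperwithjose} this pencil lies in $\vert\CO(1)\vert$ or in $\vert\CO(2)\vert$.
\begin{itemize}
\item If the pencil is a pencil of lines, the group is free, as recalled from \cite{libyuz}. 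This case is degenerate and not counted among the quadric families.
\item Otherwise we have a pencil of conics.
\end{itemize}

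\emph{Step 2: the conic pencil is the generic one.} Transversality at the base points forces the pencil to have $4$ distinct base points in general position. Tangency at a base point would violate condition (b) of Prop. \ref{pencils}. Three collinear base points would give a fixed line component, contradicting reducedness and essentiality.

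So the pencil is the generic pencil of conics. Its discriminant is the plane cubic curve of degree $3$ from Corollary \ref{smalldiscrimlist}(ii), and the line meets it transversally in $3$ points of the stratum $S$ of reduced line pairs, giving the three line pairs of the Ceva configuration. Such pencils form a single equisingular isotopy class, since $PGL_3$ acts transitively on $4$-tuples of points in general position.

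\emph{Step 3: the invariant $i$ determines the group.} A curve $D$ is therefore specified, up to the isotopy of Remark \ref{toptypesofpencils}, by the number $i\in\{0,1,2,3\}$ of singular members among the $r+1$ chosen ones. The remaining $r+1-i$ chosen members are smooth conics.

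Next I show the isotopy type depends only on $i$ and $r$, using the Thom isotopy argument of Remark \ref{toptypesofpencils}.
\begin{itemize}
\item The smooth members can be moved in the connected space of configurations of distinct points of $\PP^1$ minus the three singular values.
\item The monodromy of the blown-up fibration $\widetilde{\PP^2}\to\PP^1$ permutes the three singular fibers as $S_3$, since $S_4$ acts on the base points, the three line pairs, and the choices of which fiber is singular.
\end{itemize}
This yields exactly four families $Q^i_r$, $i=0,1,2,3$. Finally I check that each family really gives an essential and transversal surjection onto $F_r$, namely the one induced by the map to $\PP^1$ minus $r+1$ points. Its meridians go to the generators, as in condition (a), because all members are reduced.

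\emph{Main obstacle.} The delicate point is Step 2: excluding degenerate conic pencils, such as those with base points of multiplicity or with a double line as a member, using only essentiality and transversality. The case of a non-reduced member (a double line) must be ruled out because it would violate condition (a). A member that is a pair of lines through a base point must be checked against transversality with the other members at that point. I would handle this by listing the classical types of conic pencils (Segre symbols) and checking condition (b) directly for each.
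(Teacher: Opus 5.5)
Your proof is correct and has the same outline as the paper's: reduce to a pencil of conics via the threshold $6$, show the pencil is the generic one, then count the number $i$ of singular members among those removed. The key middle step, however, is justified differently.

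The paper gets genericity from the discriminant. It is a cubic of degree $3$ in $\PP^5$, and line pairs form its only stratum of reduced reducible members, so the paper concludes that a pencil with no multiple fibers must be generic. You instead use condition (b) at the base points together with Bezout. A non-reduced point of the length-$4$ base scheme would make any two smooth members tangent there, and among $r+1>6$ members there are always at least two smooth ones. Three collinear base points would force a fixed line. $PGL_3$-transitivity on projective frames then finishes the step.

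Your route is actually the more robust one. Pencils of Segre type $[2,1]$ and $[3]$ (for example, conics tangent at a point) have only reduced line pairs as singular members, so ``no multiple fibers'' alone does not exclude them. What excludes them is exactly the transversality at base points that you invoke.

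You also supply something the paper leaves implicit; it only cites explicit normal forms. You show that each $i$ gives a single isotopy class, using connectedness of the configuration spaces and Thom isotopy.

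Two small corrections. First, the permutation of the three singular members does not come from the monodromy of a single fibration $\tilde X\to\PP^1$, which fixes each critical value. It comes from loops in the space of pencils: $PGL_3$ is connected, and the stabilizer $S_4$ of the base locus surjects onto $S_3$. Second, the discriminant is a cubic hypersurface in $\PP^5$, not a plane cubic curve. Your closing Segre-symbol check is unnecessary, since Step 2 already disposes of all degenerate pencils.
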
 
\begin{proof} It follows from the results of section 4.2 in \cite{cogome} or Theorem \ref{keybound} and Prop. \ref{pencils} that a curve $D$ having
an extremal fundamental group of the complement is a union of $r+1 > 6$ smooth or reduced reducible quadrics. Since the degree of the only stratum of the discriminant of 
linear system $\PP(H^0(\PP^2,\CO(2))$ with only reducible and  reduced components 
is $3$, it follows that the only pencil of quadrics having no multiple fibers is the generic one intersecting this stratum at smooth points transversally, and a possible number of singular members of the pencil is $i, 0 \le i \le 3$ which shows the claim. Explicit forms of all possible pencils are given in \cite{dolgclass} 
Lemma 9.3.8. 
\end{proof}																															
\subsection{Presentation of the groups $Q^i_r$}

\begin{thm}\label{presentationquadrics} Let $Q^3_r, r \ge 2$ be the fundamental group of the complement to a reducible plane curve that is the union of 3 singular members  and $r-2$ smooth members  
in  a generic pencil of plane quadrics \footnote{Since $4,5$ and $6$ are below the threshold of the
saturated subset $\{[1],[2]\} \subset Pic(\PP^2)$, $Q^3_r, 2 \le r \le 5$, the groups $Q^3_r$ are not extremal in the sense of Definition \ref{threshold}.}

$(a)$ The group 
$Q^3_{3}$  admits a presentation with (defined below geometrically) generators: 
$$a_1, a_2,a_3, x_1,x_2,x_3$$
and 9 relations, 3 of which are the commutativity relations and 6 are obtained by equating cyclic permutations of factors:
\begin{equation}\label{presentationQ34}
[x_3a_3x_3^{-1},x_2]=[a_2.x_1]=[a_1x_3]=1
\end{equation}
$$(a_1x_1x_2) \ \ \ (a_2x_2x_3) \ \  (a_3x_1x_3)
$$

$(b)$ For $r \ge 3$, a group $Q^3_{r}$ admits a presentation:
\begin{equation}\label{presentationQ3} 
\{a_i,x_j,i=1,\cdots r, j=1,2,3 \vert [x_3\alpha_3x_3^{-1},x_2] =[a_2,x_1]=[a_1x_3]=[a_i,x_j]=1 \ 4\le i\le r
\end{equation}
$$ (a_1x_1x_2) \ \ \ (a_2x_2x_3) \ \  (a_3x_1x_3) \}
$$
Alternatively, $Q_r^3, r \ge 3$ is the quotient of amalgamated product $Q_3^3*F_{r-3}$ by normal subgroup generated by 
commutators $[x_j,a_i], j=1,2,3$ and $a_i, i=4,\cdots r$ are generators of free group $F_{r-3}$.   

$(c)$ The remaining groups $Q_r^i, 0\le i<3$ are the following quotients of $Q_{r+1}^3$: 
\begin{equation}\label{presentationQ2}Q^2_{r+1}=\ZZ^2 \oplus F_{r} \ \  Q^1_{r+1}=\ZZ \oplus F_{r}, \ \ Q^0_{r+1}=\ZZ_2\oplus F_{r}
\end{equation}
\end{thm}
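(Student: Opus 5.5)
We will compute the groups through the geometry of a generic pencil of conics. Take the base points to be four general points, so that the three singular members are the pairs of opposite sides of the complete quadrilateral. This is the Ceva arrangement $\A(S,S,S)$ of six lines, with four triple points at the base points and three double points at the diagonal points. The curve for $Q^3_3$ is this six-line arrangement together with one smooth conic through the four base points. For $Q^3_r$ we add further smooth conics of the pencil.

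\textbf{Step 1: the group $Q^3_3$.} We apply the Zariski--van Kampen method (Theorem \ref{ZvK} in the Appendix) to a generic linear projection. A more economical choice, which we will try first, is to blow up the four base points, giving the del Pezzo surface of degree 5. The pencil then becomes the conic bundle $\tilde X\to\PP^1$, whose three singular fibers are pairs of $(-1)$-curves and whose four sections are the exceptional curves.
\begin{itemize}
\item Removing all fibers in the curve turns the complement into a fibration over $\PP^1$ minus points.
\item The fiber of this fibration is $\PP^1$ minus the four base points.
\item The fundamental group of the complement in $\PP^2$ is recovered by adding back the relations that kill the meridians of the exceptional curves.
\end{itemize}
We choose the generators $a_i$ to be meridians of the components of the singular members and the $x_j$ to be meridians of the lines near the diagonal points. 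The local relations then come in two kinds.
\begin{itemize}
\item At the three double points of the Ceva arrangement (the intersection points of the two components of a singular member) we get commutators. Conjugation is needed to transport base paths, which is why one of them reads $[x_3a_3x_3^{-1},x_2]$.
\item At the four triple points, where three lines and the conic meet transversally, we get cyclic relations $(a\,x\,y)$. These say that the cyclic permutations of the product coincide.
\end{itemize}
The main obstacle is choosing consistent base paths in the braid monodromy, so that exactly the nine relations (\ref{presentationQ34}) appear and all others are consequences. We will check this by computing the braid monodromy of the real picture of the Ceva arrangement plus one conic. The real picture is available because all the singular points are real for a suitable choice of base points.

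\textbf{Step 2: the groups $Q^3_r$ for $r>3$.} Each additional smooth conic of the pencil meets the rest of the curve only at the four base points, and there transversally. In the conic-bundle picture, adding a further fiber is a fiber-removal operation. Its new meridian $a_i$ commutes with the fiber generators, because the monodromy around a smooth fiber is trivial on the local fiber group. The boundary relation at the base points is already accounted for by the cyclic relations. This gives the relations $[a_i,x_j]=1$ for $i\ge 4$, and hence the presentation (\ref{presentationQ3}) and its description as the quotient of $Q^3_3*F_{r-3}$.

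\textbf{Step 3: the groups $Q^i_r$ for $i<3$.} Removing a singular member from the curve means filling it in. By the filling-in argument (\ref{killingmeridians}) used in Proposition \ref{quadricextremal}, this adds relations killing the meridians of its two components. Applying this to the presentation of $Q^3_{r+1}$ and simplifying with the cyclic relations collapses the $Q^3_3$ part.
\begin{itemize}
\item Filling in one singular member leaves an abelian $\ZZ^2$ commuting with $F_r$.
\item Filling in two leaves $\ZZ$.
\item Filling in all three leaves the $\ZZ_2$ coming from $H_1$ of the complement of a conic.
\end{itemize}
As a cross-check for $Q^0$, we will use the fibration of $\PP^2$ minus smooth conics. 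Its fiber is a conic minus four points, and its base is $\PP^1$ minus $r+1$ points. The splitting together with the homology computation in (1) confirms the direct product forms in (\ref{presentationQ2}).
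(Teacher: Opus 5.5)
\textbf{Verdict.} Part (a) is not established. Step 1 carries all of its content, and it contains a false step and leaves the actual computation undone. Steps 2 and 3 use the paper's arguments, but they presuppose the fiber generators that Step 1 would have killed.

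\textbf{The false step.} All four base points lie on $D$, so $\PP^2\setminus D$ is $\tilde X$ minus the \emph{total} transform of $D$. That total transform contains $E_1,\dots,E_4$. So nothing is to be ``recovered'' by killing meridians of exceptional curves, and doing so computes a different group.

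In fact the generators $x_1,x_2,x_3$ of (\ref{presentationQ34}) are exactly meridians of $E_1,E_2,E_3$ inside a generic conic. That conic is the fiber, a sphere minus four points, and $E_4$ serves as the section at infinity. These $x_j$ are nonzero even in $H_1$: each maps to the sum of the classes of the four branches of $D$ through the corresponding base point. Killing them leaves only the free group on $a_1,a_2,a_3$.

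\textbf{The fallback does not close the gap.} Your alternative is to read the nine relations off the local singularities of $D$ under a generic projection, but the counts do not match.
\begin{itemize}
\item The base points are ordinary \emph{quadruple} points (three lines and the conic). There are four of them, each giving a four-term cyclic relation.
\item There are also three nodes.
\item A generic projection of this degree-$8$ curve starts from eight generators and has the two branch points of the conic as extra critical fibers.
\end{itemize}
None of this corresponds to three three-term cyclic relations plus three commutators. Saying you ``will check this by computing the braid monodromy of the real picture'' defers exactly what has to be proved.

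\textbf{The missing idea: stay in the conic bundle you already set up.} Remove $E_1,\dots,E_4$, the three singular fibers and the smooth conic. Take that conic as the fiber at infinity and a push-off of $E_4$ as a section. This gives a locally trivial fibration over $\CC$ minus three points, with fiber a disc minus three points and with a section. Hence $Q^3_3\cong F_3\rtimes F_3$.

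The monodromy takes values in the mapping class group of a disc with three marked points, fixing the boundary, i.e.\ in $P_3$. Each singular fiber contributes a Dehn twist about a curve enclosing two of the three punctures: $s_1^2$, $s_1^{-1}s_2^2s_1$ and $s_2^2$, whose product is $\Delta^2$.

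The relations $a_i^{-1}x_ja_i=\beta(a_i)(x_j)$, computed with Artin's action, give the nine relations three at a time. The puncture outside the twisted pair gives a commutator, and the two inside give a cyclic relation. For example, $s_1^2$ gives $[a_1,x_3]=1$ and $a_1x_1x_2=x_2a_1x_1=x_1x_2a_1$. This last relation is the local relation at the triple point created by contracting the component of that fiber meeting $E_1$ and $E_2$, not at a base point of $\PP^2$.

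\textbf{Steps 2 and 3.} These are the paper's arguments: smooth fibers have trivial monodromy, and singular fibers are filled in via (\ref{killingmeridians}). However, they rely on the fiber generators above, and the simplification in (c) is only asserted. Carried out, together with your homology check, it gives $Q^2_r=\ZZ^2\oplus F_r$, $Q^1_r=\ZZ\oplus F_r$ and $Q^0_r=\ZZ_2\oplus F_r$, as the paper's proof concludes.

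The indices $Q^i_{r+1}$ in the printed statement are inconsistent with $H_1$. For instance, the complement of $r+2$ smooth conics has $H_1=\ZZ^{r+1}\oplus\ZZ_2$, whereas $\ZZ_2\oplus F_r$ has abelianization $\ZZ_2\oplus\ZZ^r$. So your claim that homology ``confirms'' the printed forms is wrong.

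Finally, for $Q^0$, once the singular fibers are filled in the map is no longer a fibration. Identifying the fiber contribution as $\ZZ_2$ therefore again requires killing the three vanishing cycles, which is the same monodromy data.
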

\begin{proof} Group $Q^3_3$ is the fundamental group of the complement in $\PP^2$ to a union $D^3_3$ of 3 singular members of a pencil of 
quadrics without multiple fibers and a smooth quadric in the same pencil. Equivalently, the conic-line arrangement $D^3_3$ is a union 
 Ceva arrangement of 6 lines having 4 generic points as its points of multiplicity 3 and a smooth quadric in the pencil having these 4 points as the base points. Blowing up $X=\PP^2$ at these 4 points and denoting the exceptional curves $E_i, i=1,\cdots ,4$ on resulting surface $\tilde X$, we consider regular map $\pi: \tilde X \setminus (\pi^{-1}(Crit \cup P) \bigcup_1^4 E_i)) \rightarrow \PP^1$, where $Crit \subset \PP^1$ is the set of critical values of $\pi: \tilde X\rightarrow X$, $P \in \PP^1\setminus Crit $. We apply the Zariski-van Kampen method 
 to obtain the presentation (a) as described in Appendix, Theorem \ref{ZvK},(\ref{redsection222}), (in the case when $G$ is empty). 
 
 Let us designate one of the curves $E_i$ as the section at infinity $E$ in (\ref{redsection222}),Theorem \ref{ZvK} and assume that $C^{\infty}$-section $\tilde E$ of the normal bundle of $E$, intersects $E$ transversally at $E\cap \pi^{-1}(\bar B')$ ($\bar B' \in \PP^1\setminus Crit$, cf. notations of Theorem \ref{ZvK}).  Consider geometric monodromy (cf.(\ref{geommonodromyformula})): $$\beta: \pi_1(\PP^1\setminus (Crit \cup \bar B'), d)\rightarrow Mod (S^2,[4], \partial(T(E)) \cap \pi^{-1}(d_0), \bigcup_1^3 E_i \cap \pi^{-1}(d))$$ where $d_0 \in \PP^1\setminus Crit \cup \bar B'$ is the base point.  The target of 
 $\beta$ is the mapping class group of a surface with one boundary component and 3 punctures in interior points, that is 2-sphere with removed open disk: $\pi^{-1}(d) \setminus T(E) \cap \pi^{-1}(d)$ centered at intersection point $E\cap \pi^{-1}(d_0)$ with marked points being the intersections of $\pi^{-1}(d_0)$  with remaining 
 3 exceptional curves $E_i, i=1,2,3$.

 \begin{figure}
\centering












     
    
     
\begin{tikzpicture}[
  dot/.style={circle,draw,fill=white,minimum size=1.0 mm,inner sep=0pt}
]

\node[dot] (a1) at (0,0) {};
\node[dot] (a2) at (.7,0) {};
\node[dot] (a3) at (1.4,0) {};

\draw[thick,red] ($(a1)!0.5!(a2)$)
  ellipse[x radius=.53,y radius=.18];

 \node at (0.35,-0.5) (b) {$s_1^2$};

\node[dot] (b1) at (2.8,0) {};
\node[dot] (b2) at (3.5,0) {};
\node[dot] (b3) at (4.2,0) {};
   
 \draw[thick, red, smooth cycle] plot coordinates {
    (2.7, 0.1)   
    (3.5, 0.3)   
    (4.3, 0.1)   
    (4.4, -0.05)   
    (4.2, -0.2)
     (3.85,-0.1)
     (3.5,0.13)
    (2.8, -0.2)   
};

      \node at (3.5,-0.5) (b) {$s_1^{-1}s_2^2s_1$};
    

\node[dot] (c1) at (5.6,0) {};
\node[dot] (c2) at (6.3,0) {};
\node[dot] (c3) at (7.0,0) {};

\draw[thick,red] ($(c2)!0.5!(c3)$)
  ellipse[x radius=.53,y radius=.18];

  \node at (6.6,-0.5) (b) {$s_2^2$};

\end{tikzpicture}
\caption{Vanishing cycles}
\label{fig:vancycle} 
\end{figure}

  The latter mapping class group can be identified with the pure braid group $P_3$ (cf. \cite{FM}, Sect. 9.1.4). The monodromy of a generic pencil of quadrics appeared in the literature on several occasions (cf. e.g. \cite{auroux}, Sect. 3.1) and is given by 3 Dehn twists around cycles surrounding 3 pairs among 
 mentioned 3 point. These 3 Dehn twists lead to lantern relations and their presentation using arrangements of 3 lines in $\CC^2$ is contained in  \cite{eko}.
 It follows from \cite{eko} that in standard geometric description of Artin's braid group (cf. \cite{FM}), these vanishing cycles and the corresponding braids 
 are those on Figure \ref{fig:vancycle}.   
Algebraic description of these Dehn twist as the elements of $P_3$ in terms of standard generators of the Artin's braid group are (cf. \cite{FM})   is as follows (for appropriate choice of paths $a_i  \in \pi_1(\PP^1\setminus Crit \cup \bar B',d_0)$):
\begin{equation}\label{braidmonodromyquadrics}
 \beta(a_1)=s_1^2, \ \ \beta(a_2)=s_1^{-1}s_2^2s_1, \ \  \beta(a_3)=s_2^2 
\end{equation}		 
The product is the generator of the center $\Delta^2$ of $P_3$ (this identity is the lantern relation \cite{eko}) 
and the classical formulas for the action of 
 the action of standard generators $s_i, i=1,\cdots n-1$ of the braid group $B_n$ on generators $x_i, i=1,\cdots n$ of the free group i.e.: 
 \begin{equation}\label{artinaction}
  s_i(x_i)=x_ix_{i+1}x^{-1}_i, s_i(x_{i+1})=x_i, \ s_i(x_j)=x_j, \ j\ne i,i+1
  \end{equation}
  give the following presentation of the group $Q^3_3$.  It has 6 generators $x_1,x_2,x_3 \in \pi_1(\pi^{-1}(d_0)\setminus E_i\cap \pi^{-1}(d_0)), 
  a_1,a_2,a_3 \in \pi_1(\PP^1\setminus Crit \cup \bar B',d_0)$ and 9 braid monodromy relations:
\begin{equation}
r(i,j): \ \ \  \beta(a_i)x_j=a_i^{-1}x_ja_i
\end{equation}
or explicitly:  
$$
r(1,j): a_1^{-1}x_1a_1=s_1^2(x_1)=x_1x_2x_1x_2^{-1}x_1^{-1}; \ \ a_1^{-1}x_2a_1=s_1^2(x_2)=x_1x_2x_1^{-1}; \ \ \
a_1^{-1}x_3a_1=x_3
$$
Using the second relation to simplify the first, one obtains:
\begin{equation}\label{alpha1relations}
r(1,j):  a_1x_1x_2=x_2a_1x_1=x_1x_2a_1 \ \ \ [a_1,x_3]=1
\end{equation}
Similarly, 
$$
r(2,j): [a_2,x_1]=1; \ \ a_2^{-1}x_2a_2=s_2^2(x_2)=x_2x_3x_2x_3^{-1}x_2^{-1}; \ \  a_2^{-1}x_3a_2=s_2^2(x_3)=x_2x_3x_2^{-1}
$$
and hence
\begin{equation}\label{alpha2relations}																		
r(2,j):  a_2x_2x_3=x_3a_2x_2 =x_2x_3a_2; \ \  [a_3x_1x_3x_1^{-1}x_3^{-1},x_2]=1
\end{equation}
The last triple of relations corresponding to $a_3=s_2^{-1}s_1^2s_1$ is 
\begin{equation}\label{alpha3relations}
r(3,j): a_3x_1x_3=x_3a_3x_1=x_1x_3a_3 \ \ [a_3,x_2]=1
\end{equation}
This shows (a). 

Part (b) follows from Appendix, Theorem \ref{ZvK},(\ref{redsection222}), since the braid monodromy corresponding to 
smooth fibers of the pencil is trivial. The description as an amalgamated product is an immediate consequence of presentation (\ref{presentationQ3}). 


To obtain presentation for $Q^2_r$ we start with presentation for $Q^3_{r+1}$ and add relations corresponding to filling in one 
reducible fiber of the pencil applying (\ref{killingmeridians})).
In the case of generic pencil of quadrics, each singular fiber is an intersection of two lines and meridians of the components are the loops
$a, a'$ such that $a^{-1} a'$ is a vanishing cycles (cf. (\ref{killingmeridians})). Filling in the fiber of the pencil corresponding to generator $a_1$ and vanishing 
cycle $x_1x_2$ yields presentation for $Q^2_r$
\begin{equation} \{a_2,a_3,\cdots a_r, x_2,x_3 \vert a_2x_2x_3, a_3x_2^{-1}x_3, [x_2,x_3a_3x_3^{-1}]=[a_2,x_2^{-1}]=1, 
\end{equation}
$$ [a_i,x_2]=[a_i,x_3]=1, i \ge 4\}$$
which can be identified with $\ZZ^2\oplus F_r$.
Similarly, for  $Q^1_4$ one obtains:
\begin{equation}
\{a_1,a_2,a_3, a_4,\cdots, a_r, x_2,x_2,x_3 \vert a_1=a_2=1, x_1x_2=x_2x_3=1, a_i,x_2]=[a_i,x_3]=1, i\ge 4
 \}
\end{equation}
which shows that $Q^1_r=\ZZ\oplus F_{r}$ 
and similarly $Q^0_r=\ZZ_2\oplus F_r$.
\end{proof}

\begin{cor} The group $Q^3_2$ is the quotient of $Q^3_3$ by the relations in Appendix, Theorem \ref{ZvK} (\ref{redsection}) and is isomorphic to the quotient of pure braid group $P_4$ by its center.
\end{cor}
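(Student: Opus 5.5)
Two things have to be shown: that $Q^3_2$ is the quotient of $Q^3_3$ described in Theorem \ref{ZvK} (\ref{redsection}), and that this quotient is $P_4/Z(P_4)$.

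For the first part, $Q^3_2$ is the fundamental group of the complement of the Ceva arrangement alone, i.e.\ the three singular members of the generic pencil of conics through four points. Equivalently, it is the complement of $D^3_3$ with the smooth conic filled back in. I would start from the presentation of $Q^3_3$ in Theorem \ref{presentationquadrics}(a). In the fibration picture over $\PP^1$, filling in the smooth fiber $\pi^{-1}(\bar B')$ removes the puncture at $\bar B'$ from the base. By (\ref{redsection}) this adds the relation that the loop in the base around $\bar B'$ becomes trivial. Since the monodromy around $\bar B'$ is trivial, that loop is expressed through the product $a_1a_2a_3$, corrected by the boundary (section at infinity) contribution in the fiber. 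I expect the resulting extra relation to be of the form $a_1a_2a_3=(x_1x_2x_3)^{\epsilon}$ for the appropriate sign $\epsilon$ coming from the twisting of $\tilde E$. I would read off the precise form from (\ref{redsection}) rather than guess it.

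For the second part I would use the classical fact that the Ceva arrangement $xyz(x-y)(y-z)(x-z)=0$ is the projectivized braid arrangement $A_3$. Its affine complement is the configuration space of four ordered points in $\CC$ modulo translation, whose fundamental group is $P_4$. Projectivizing means removing the line at infinity and quotienting by the $\CC^*$-action. This kills exactly the central loop, the full twist $\Delta^2$, which generates $Z(P_4)\cong \ZZ$. Concretely, the projective complement is the quotient of the affine central complement by $\CC^*$ acting freely. The long exact homotopy sequence of the $\CC^*$-bundle then gives
\begin{equation*}
\ZZ\rightarrow P_4 \rightarrow \pi_1(\PP^2\setminus \A)\rightarrow 1,
\end{equation*}
and the image of the fiber class is $\Delta^2$. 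So $Q^3_2\cong P_4/\langle\Delta^2\rangle$.

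The remaining step, which I expect to be the main obstacle, is checking that the two descriptions agree. This matters because the abstract identification bypasses the presentation, so I would still match generators. I would use the splitting $P_4=F_3\rtimes P_3$, where $F_3=\langle x_1,x_2,x_3\rangle$ is the fiber group and $P_3$ acts by the Artin action (\ref{artinaction}). The element $\Delta^2_3$ of $P_3$ factors as $s_1^2\cdot s_1^{-1}s_2^2s_1\cdot s_2^2$, which is the lantern relation already used in the proof. The centre of $P_4$ is generated by $\Delta_4^2=\Delta_3^2\cdot(x_1x_2x_3)$ up to conventions. Sending $a_i$ to the braids $\beta(a_i)$ lifted into $P_4$ as in (\ref{braidmonodromyquadrics}), the relations $r(i,j)$ become exactly the semidirect product relations. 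The extra relation from filling in becomes $\Delta_4^2=1$. The delicate point is the sign and basepoint convention in identifying the filled-in relation with the full twist. I would verify it through abelianization: $H_1$ of the Ceva complement is $\ZZ^5$, while $P_4$ has abelianization $\ZZ^6$, and the quotient by $\Delta^2$, whose image in $H_1(P_4)$ is the primitive vector $(1,\dots,1)$, has abelianization $\ZZ^5$. This fixes the exponent $\epsilon$.
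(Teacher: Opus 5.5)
Your argument is correct, but it proves the isomorphism with $P_4/Z(P_4)$ by a different route from the paper. The first claim is handled exactly as in the paper. Filling in the smooth conic, taken as the fiber at infinity, adds via (\ref{redsection}) with $l=E_4^2=-1$ the relation $r_\infty\colon a_1a_2a_3=(x_1x_2x_3)^{-1}$, so $\epsilon=-1$ can simply be read off.

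\textbf{Comparison of routes.} The paper argues algebraically. Killing $x_1,x_2,x_3$ gives an extension $1\to F_3\to Q^3_2\to \langle a_1,a_2,a_3\mid a_1a_2a_3\rangle\to 1$. The paper compares this with the Fadell--Neuwirth extension $1\to F_3\to P_4/Z(P_4)\to P_3/Z(P_3)\to 1$ and concludes from the equality of the outer actions $F_2\to \mathrm{Out}(F_3)$, both given by the Artin action. This implicitly needs two facts: $F_3$ injects into $Q^3_2$ (which comes from the fibration), and $Z(F_3)=1$, so that an extension is determined by its outer action.

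You instead identify the Ceva complement with $M(A_3)/\CC^*$ and use the $\CC^*$-bundle whose fiber loop is the full twist $\Delta_4^2$. This is shorter and needs no comparison of monodromies, because it is an identification of spaces. The paper's route, in exchange, ties the isomorphism to the explicit presentation: it exhibits the pencil map as the forgetful map $M_{0,5}\to M_{0,4}$ and the $a_i,x_j$ as pure braid generators.

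\textbf{The generator-matching step.} This step is not needed, since both claims concern the same invariant $\pi_1(\PP^2\setminus\A)$. As written it would also not do what you want. $H_1(Q^3_3)\cong\ZZ^6$ on $a_1,a_2,a_3,x_1,x_2,x_3$, and imposing $a_1+a_2+a_3=\epsilon(x_1+x_2+x_3)$ gives $\ZZ^5$ for both signs, so the abelianization does not detect $\epsilon$.

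The right test is group-theoretic. $\beta(a_1a_2a_3)=\Delta_3^2$ acts on $\langle x_1,x_2,x_3\rangle$ by conjugation by $(x_1x_2x_3)^{\pm1}$. Only the sign for which $a_1a_2a_3(x_1x_2x_3)^{-\epsilon}$ centralizes the $x_j$ is compatible with $F_3$ injecting into $Q^3_2$. With the other sign, $(x_1x_2x_3)^2$ would centralize the image of $F_3$.

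Relatedly, the loop around $\bar B'$ does not become trivial after filling in. Its framed monodromy is the boundary twist, and it becomes a power of $\tilde x$.
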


\begin{proof} It follows from Theorem (\ref{ZvK}) (\ref{redsection}) that filling in the smooth fiber of the pencil results in the relation $r_{\infty}: a_1a_2a_3=(x_1x_2x_3)^{-1}$ i.e.
$Q^3_2=Q^3_3/r_{\infty}$.  
Consider the quotient  $Q_3^2$ by the normal subgroup generated by $x_1,x_2,x_3$ which yields exact sequence:
\begin{equation}\label{sequenceQ^3}
   1 \rightarrow F_3 \rightarrow Q^3_2/r_{\infty} \rightarrow F_3/a_1a_2a_3 \rightarrow 1
\end{equation}
On the other hand the quotient of $P_4/Z(P_4)$ of pure braid group by its center fits the exact  sequence:
\begin{equation}\label{sequenceP_4}
   1 \rightarrow F_3 \rightarrow P_4/Z(P_4) \rightarrow P_3/Z(P_3) \rightarrow 1
\end{equation}
The right group in (\ref{sequenceP_4}) is isomorphic to $Ker(B_3/(\Delta^2) \rightarrow S_3)$ hence can be 
identified with the index 6 subgroup in $PSL_2(\ZZ)$ that is free on two generators.
The natural map between groups at the ends of the extensions (\ref{sequenceQ^3}) and (\ref{sequenceP_4}) yields the isomorphism between these extension.
since both have monodromy maps $F_2\rightarrow B_3/Z(B_3) \rightarrow Out(F_3)$ corresponding to the Artin's
action (\ref{artinaction})  of $B_3$ on $F_3$. 
\end{proof}
\begin{rem} Groups $Q^i_{i+1}, i=1,2$ are formed by one conic and arrangement of lines with no cycles and their structure was obtained in 
\cite{freedmangarber}.
\end{rem}

\section{Surfaces with $\Delta=0$: Scrolls $(F_e,\CO(E+kF))$}\label{scrollssection}

In this section we study the extremal subgroups in linear system $E+kF$ on the scroll $S_{k-e,k}=(F_e,\CO(E+kF))$ corresponding to 
saturated system $\nabla_k$. We give the upper bound on the number of equivariant isotopy classes pencils with reduced members
and calculate the corresponding extremal groups for several types of pencil, including all the types that appear in linear systems with the degree of 
discriminant less than 6.

\subsection{Pencils of hyperplane sections of scrolls.}

\begin{prop}\label{pencils classification} A pencil of hyperplane sections on the polarized surface $(F_e,\CO(E+kF)), k \ge e+1$ 
(i.e. a scroll $S_{k-e,k}$)
has $2k-e$ singular members counted with multiplicities. 
 A pencil satisfying the conditions (a),(b) of Prop. \ref{pencils} have the  singular members
 of the following types:

($\dagger$) A reducible curve that is a union of a smooth curve in linear system $\PP(H^0(E+(k-l)F)), 1 \le l \le k-e$ and  $l$ distinct rulings of the scroll,
i.e. a curve in the stratum $S_{1^l}$ of the dual variety of the scroll (cf. Theorem \ref{stratification}(a)).

($\dagger \dagger$) A reducible curve that is the union of the exceptional curve $E$ of $F_e$ (the directrix of the scroll) and a set of $k$ distinct lines of the ruling i.e.
a curve in the stratum $T_{1^k}$ (cf. Theorem \ref{stratification} (b)).

If a pencil has a fiber of type ($\dagger \dagger$) and $s$ fibers of type (a) having $l_i, i=1, \cdots s$ rulings in respective elements of the pencil then $\sum l_i=k-e$.
If a pencil has only fibers of type ($\dagger$), then the numbers of lines of ruling $l_i$ satisfy $\sum l_i=2k-e$. 

\end{prop}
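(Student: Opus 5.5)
The approach is to combine the stratification of the dual variety from Theorem \ref{stratification} with an Euler characteristic count on the blow-up at the base points, in the spirit of inequality (\ref{keyinequality}), except that here equality holds.

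\textbf{Step 1: total count.} By Corollary \ref{smalldiscrimlist}, the dual variety of $S_{k-e,k}$ has degree $2k-e$. A pencil is a line in $\PP(H^0(\CO(E+kF)))$, and it is not contained in the dual variety because its generic member is smooth by Bertini. So it meets the discriminant in $2k-e$ points counted with intersection multiplicity. This gives the first assertion.

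\textbf{Step 2: the possible singular members.} I use the first observation in the proof of Theorem \ref{stratification}: a singular hyperplane section is a union of rulings, counted with multiplicities, together with a residual curve in $|E+(k-l)F|$ that is transversal to them. Condition (a) of Prop. \ref{pencils} forces every member of the pencil to be reduced. Hence all rulings occur with multiplicity one, so the partitions in Theorem \ref{stratification} are $1^l$ or $1^k$. The residual curve is either smooth and irreducible, which gives type ($\dagger$) and the stratum $S_{1^l}$, or it splits further. In the latter case, iterating the same argument on $S_{k-e-l,k-l}$ shows that it breaks into rulings plus a residual curve. This residual is either smooth, which is again ($\dagger$) with a larger $l$, or is the directrix $E$ once all $k$ rulings have been split off, which is type ($\dagger\dagger$) and the stratum $T_{1^k}$.

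\textbf{Step 3: the weighted count.} Rather than arguing about the intersection multiplicity of the line with the dual variety at these strata, which would require comparing with the tangent cone and claim 3 of Theorem \ref{stratification}, I would count Euler characteristics. The base locus consists of $(E+kF)^2=2k-e$ points. By (b) these points are distinct, and they are smooth points of every member. Blowing them up gives $\tilde F_e\to\PP^1$ with $e(\tilde F_e)=4+2k-e$. The fibers of this map are isomorphic to the members of the pencil.

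A generic member is a smooth rational curve, since it is a rational normal curve, so its Euler characteristic is $2$. Additivity then gives
$$4+2k-e=2\cdot 2+\sum_s\bigl(e(C_s)-2\bigr).$$

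For a member of type ($\dagger$), the $l$ disjoint rulings each meet the smooth rational residual curve transversally once, because $F\cdot(E+(k-l)F)=1$. Hence $e(C_s)=2(l+1)-l=2+l$. For type ($\dagger\dagger$) the same computation with $E\cdot F=1$ gives $e(C_s)=2+k$.

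Therefore $\sum l_i=2k-e$ when only ($\dagger$) members occur, and $k+\sum l_i=2k-e$, that is $\sum l_i=k-e$, when a ($\dagger\dagger$) member is present. The same identity also shows that there is at most one member of type ($\dagger\dagger$), since two of them would require $2k\le 2k-e$, which is impossible because $k>e\ge 0$. It also recovers the count of $2k-e$ with multiplicities, the weight $l$ matching claim 3 of Theorem \ref{stratification}.

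\textbf{Main obstacle.} I expect the main obstacle to be Step 2: making rigorous that reducedness together with membership in the pencil leaves only the strata $S_{1^l}$ and $T_{1^k}$. In particular one must exclude a residual curve that is reducible in some other way, for example containing $E$ together with fewer than $k$ rulings. I would handle this with the iterated resultant description of the dual variety from the proof of Theorem \ref{stratification}.
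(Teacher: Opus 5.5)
Your proof is correct, and for the relations on the $l_i$ it takes a different route from the paper.

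\textbf{Comparison with the paper.} The paper identifies the admissible singular members with points of the strata $S_{1^l}$ and $T_{1^k}$ of Theorem \ref{stratification}. It then obtains $\sum l_i=2k-e$, resp.\ $k+\sum l_i=2k-e$, from Bezout for the pencil line against the dual hypersurface of degree $2k-e$. Each intersection point is weighted by the multiplicity of its stratum (claim 3, which rests on Aluffi's formula). This is quick once the stratification is available. However, it tacitly uses that the line meets the dual variety at such a point with intersection multiplicity equal to the multiplicity of the point, i.e.\ that the line avoids the tangent cone there.

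Your Euler characteristic count on the blow-up, $4+2k-e=4+\sum_s(e(C_s)-2)$ with $e(C_s)-2$ equal to $l$ or $k$, is the equality case of (\ref{keyinequality}). It needs neither claim 3 nor any local intersection multiplicities. It only needs the base points to be simple and the members reduced, which the Bezout route needs as well.

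\textbf{Three small remarks.}

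First, your Step 2 is easier than you expect. The only irreducible curves with class of the form $E+mF$ are $E$ itself ($m=0$) and sections with $m\ge e$. So a member of $|E+kF|$ is such a curve plus fibers. In particular, $E$ together with fewer than $k$ rulings cannot occur, and no iteration is needed.

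Second, strictly speaking, (a) only controls the members lying in $D$. Reducedness of every member follows instead from (b). Transversality of two members at a base point forces all members to be smooth there. Each component of a member meets any other member, and since there are no fixed components it must pass through a base point.

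Third, your side claim that at most one member is of type $(\dagger\dagger)$ fails for $e=0$, where $2k\le 2k-e$ is not a contradiction. There $T_{1^k}=S_{1^k}$, and the paper's table indeed lists two members of type $E+F+F$ on $(F_0,\CO(E+2F))$. For $e\ge 1$ your count is fine, as is the paper's fixed-component argument.
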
 

\begin{proof} Pencils on $S_{k-e,k} \subset \PP^{2k-e+1}$ with singular members satisfying the conditions of Prop.  \ref{pencils}  correspond to 
the lines in the dual projective space ${\PP^{2k-e+1}}^{\vee}$ and intersecting the dual variety of the scroll only in the points of the strata $S_{1^l}$ and at most one point in the strata $T_{1^k}$ (the latter is since out pencils have no fixed components). Indeed,  the curves in the 
remaining strata of the dual variety of $S_{k-e,k}$  as members of a pencil will have a component of multiplicity greater than 1 violating (a) in Prop. (\ref{pencils}). 
The descriptions ($\dagger$),($\dagger \dagger$) are just 
descriptions of the curves corresponding to the points in these strata. The relations for $l_i$ restricting the combinations of the strata that 
can appear in actual pencils follows from Theorem \ref{stratification} (c) and Bezout theorem. 
\end{proof} 

\begin{cor} Let $D$ be a curve of $F_e$ with classes of components having classes in the saturated system (cf. Section \ref{boundsonnumber}) $$\nabla(D_k) =\{E,F, E+iF, e \le i \le k\}.$$
 If there is surjection $\pi_1(F_e\setminus D)\rightarrow F_r$ where $r \ge max(2(k+1)-e,7)$.
 then either
 
 (A) $D$ is a union of $r+1$ curves each in the linear system $H^0(F_e,O(E+kF)$  one of which is 
  a curve as in ($\dagger \dagger$), $s$ curves of types $(\dagger)$ with $l_i, i=1, \cdots s$ being such that $\sum_1^s l_i \le k-e$
and the remaining $r-s$ curves being smooth. 

(B) $D$ is a union of $s$ curves of type ($\dagger$) with $l_i, 1\le k-e, i=1, \cdots s$ being such that $\sum_1^s l_i \le 2k-e$
and $r+1-s$ smooth curves in the linear system $H^0(F_e,\CO(E+kF))$.
\end{cor}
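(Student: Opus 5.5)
The plan is to chain the three preceding results. Proposition \ref{pencils} turns the surjection into a pencil, Theorem \ref{keybound} pins that pencil to the hyperplane-section system, and Proposition \ref{pencils classification} together with Theorem \ref{stratification} gives the possible singular members.

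First, I take the surjection $\pi_1(F_e\setminus D)\rightarrow F_r$ to be essential and transversal, as throughout the paper. Proposition \ref{pencils} then gives a pencil $L_{a,b}$ in some complete linear system $|aE+bF|$ on $F_e$, and $D$ is a union of reduced members of it. Since the surjection is essential, each of the $r+1$ generators $x_1,\dots,x_r,(x_1\cdots x_r)^{-1}$ is hit by meridians of components. By the construction in the proof of Proposition \ref{pencils}, the components whose meridians map to a fixed generator make up one fiber over one of $r+1$ punctures of $\PP^1$. So $D$ is exactly a union of $r+1$ reduced members, each with all components in $\nabla(D_k)$, and the pencil is essential and transversal relative to them.

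Next, since $r+1>\max(2(k+1)-e,7)$, Theorem \ref{keybound} excludes every $L_{a,b}$ with $a>1$ or $b>k$. Hence $a=1$ and $b\le k$. I expect the main obstacle to be ruling out $b<k$, i.e.\ pencils in $|E+bF|$ for $e<b<k$; these are pencils of hyperplane sections of the smaller scroll $S_{b-e,b}$. I would handle them by replacing $k$ by $b$ and rerunning the argument: their singular members are again of types ($\dagger$), ($\dagger\dagger$) for $S_{b-e,b}$. The statement should then be read as referring to the smallest linear system in $\nabla(D_k)$ containing the pencil; otherwise I would either include them as the same cases with $k$ replaced by $b$, or argue that a pencil in $|E+bF|$ with $b<k$ is not in the class "$H^0(E+kF)$" stated. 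I would commit to reading the corollary with $E+kF$ standing for the class of the generic member, so that $b=k$.

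Finally, with the pencil in $|E+kF|$, Proposition \ref{pencils classification} says its singular reduced members are of type ($\dagger$), in strata $S_{1^l}$, or of type ($\dagger\dagger$), in $T_{1^k}$. Condition (a) excludes the other strata because they produce multiple components, and at most one member lies in $T_{1^k}$. Each singular member contributes its multiplicity $l_i$, resp.\ $k$, by Theorem \ref{stratification}(3). These contributions add up to the degree $2k-e$ of the dual variety along the line. Only some of the singular members are among the $r+1$ chosen ones, so the $\sum l_i$ equalities become inequalities. If a ($\dagger\dagger$) member is present and chosen, the remaining chosen ($\dagger$) members satisfy $\sum l_i\le 2k-e-k=k-e$, which is case (A); if none is present we get $\sum l_i\le 2k-e$, which is case (B). All other chosen members are smooth by Bertini.

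One more subcase should be recorded: a ($\dagger\dagger$) member exists but is not among the chosen members. I would fold this into (B), since the inequality $\sum l_i\le 2k-e$ still holds.
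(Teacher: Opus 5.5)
Your proposal is correct and follows the paper's route: the pencil comes from Proposition \ref{pencils}, Theorem \ref{keybound} confines it to $|E+bF|$ with $b\le k$, and Proposition \ref{pencils classification} together with the multiplicities in Theorem \ref{stratification}(3) and the Bezout count against the degree $2k-e$ of the dual variety yields (A) and (B). You are more careful than the paper's two-line proof, which silently takes $b=k$; note also that Theorem \ref{keybound} only covers $a\ge 1$, $b>ae$, so your step ``hence $a=1$'' also relies on your reading that the generic member lies in $|E+kF|$, not on that theorem, to discard the ruling pencil $|F|$ (where $D$ is $r+1$ fibers and the group is $F_r$) and pencils in $|E+eF|$.
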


\begin{proof} Existence of the pencil follows from the calculation of threshold in Theorem \ref{keybound}. Cases (A) and (B) follow immediately from 
Prop. \ref{pencils classification} since they describe the singular elements of the pencil which may appear as components of $C$ with 
inequalities on $l_i$ coming from Bezout theorem and calculation of the multiplicities of the strata of dual variety of the scroll in Theorem \ref{stratification}.
\end{proof}

\begin{rem} We denote the groups $\pi_1(F_e\setminus D)$  of the complement to a curve in class (A) as $R_r^{l_1,\cdots l_s}$ ($\sum_1^s l_i \le k-e$) and 
for a curve in class (B) as $Q_r^{l_1,\cdots l_s}$. For a simplified notation $Q_r^i$ in the case when $k=e+1$, see Example \ref{casee+1}. Such 
notation reflects the number of singular curves of the type described by the strata containing the singular curves and the number of smooth 
members  among total $r+1$ deleted members of a pencil.
\end{rem}

\subsection{The number of extremal subgroups}

\begin{dfn} An $r+1$-marked pencil is a pencil with fixed set of $r+1$ of its members. 

(a) The combinatorial type of an $r+1$-marked pencil containing as marked members $s$ members in the strata $S_{1^{l_i}}, i=1,\cdots s, 1\le s \le r+1$
and remaining marked members being smooth, 
is the array $(l_i)$ viewed as a partition $\lambda$  of the integer $a= \sum_{i=1}^s l_i$ where  $0 \le a \le 2k-e, 1 \le l_i \le k-e$.  
Combinatorial type of an $r+1$-pencil having one of its singular elements in the stratum $T_{1^k}$ is the partition of $a$ (with $0 \le a \le k-e$) corresponding to the collection
of strata $S_{1^{l_i}}$ containing marked singular elements of this pencil and the remaining marked members being smooth. 

(b) Zariski weight of a combinatorial type given by a partition $\lambda$, is the number of isotopy classes of $r+1$-marked pencils preserving the combinatorial type of the pencil.
We denote by $Z(\lambda)\ge 0$ the Zariski weight of a combinatorial type given by partition $\lambda$.
$Z(\lambda)=0$ if and only if the set of strata $S_{1^{l_i}}, \lambda=(l_i)$ does not admit a multisecant intersecting them at a smooth points.
\end{dfn}

\begin{prop}\label{upperboundpartitions} 
(a) The upper bound on the number of combinatorial types of $r+1$-marked pencils on a scroll $(F_e,\CO(E+kF)$ for $r+1 \ge 2k-e$ is given by 
$$\sum_{j=0}^{2k-e} \lbrace p(2k-e-j,k-e)+p(k-e-j) \rbrace$$ 
where $p(n,k)$ is the cardinality of set of partitions $Par(n,k)$ of $n$ into parts of size at most $k$, $p(n)=p(n,n)$ is the number of partitions of $n$ and 
$p(0)=1$. 

(b) The number of $r$-extremal groups is not larger then the number classes of equivariant isotopy of $r+1$-marked pencil 
given by 
$$\sum_{j=0}^{2k-e}\sum_{\lambda \in Par(2k-e-j,k-e)} Z(\lambda)+\sum_{j=0}^{2k-e}\sum_{\lambda \in Par(k-j)} Z(\lambda)
$$
where the second sum corresponds to pencils containing stratum $T_{1^k}$ as one if singular members.
\end{prop}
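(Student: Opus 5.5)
The plan is to reduce everything to Proposition \ref{pencils classification}, which fixes which singular members an $r+1$-marked pencil may have, and then count the resulting data.

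For part (a), I would split the marked pencils into two families.

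In the first family all marked singular members are of type ($\dagger$). The $j$-th marked singular member lies in a stratum $S_{1^{l_j}}$, where $1\le l_j\le k-e$ by Theorem \ref{stratification}(a).

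The combinatorial type is therefore the multiset $(l_j)$, i.e. a partition $\lambda$ of $a=\sum l_j$ with parts at most $k-e$. Proposition \ref{pencils classification} and the multiplicity count of Theorem \ref{stratification}(3) force $\sum l_j\le 2k-e$: the whole pencil has $2k-e$ singular members counted with multiplicity, and a member in $S_{1^l}$ has multiplicity $l$.

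Writing $a=2k-e-j$ with $0\le j\le 2k-e$, the number of such types is $\sum_j p(2k-e-j,k-e)$. The case $j=2k-e$ is the empty partition, contributing $p(0,\cdot)=1$.

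In the second family one singular member lies in $T_{1^k}$. That stratum has multiplicity $k$, so the remaining singular members satisfy $\sum l_i\le (2k-e)-k=k-e$. Their types are partitions of $k-e-j$, and each part is automatically $\le k-e$, so the count is $\sum_j p(k-e-j)$, with $p(n)=0$ for $n<0$.

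Marked pencils having two members in $T_{1^k}$ are excluded, since such members share the directrix as a fixed component. Pencils in which the $T_{1^k}$ member is present but unmarked are already counted in the first family.

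The hypothesis $r+1\ge 2k-e$ ensures that every singular member of the pencil can actually be marked, so no admissible type is cut off by the number of marked members. Once the number of smooth marked members is forced to be $r+1$ minus the number of singular ones, the smooth members carry no extra combinatorial data. Summing the two families gives the bound in (a), and it is only an upper bound because some types may have $Z(\lambda)=0$.

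For part (b), I would use Remark \ref{toptypesofpencils}: equisingularly isotopic marked pencils have homeomorphic complements to the union of their marked members, by Thom isotopy, and hence isomorphic fundamental groups.

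By Theorem \ref{keybound} and Proposition \ref{pencils}, each $r$-extremal group with $r\ge\max(2(k+1)-e,7)$ arises from an essential, transversal $(r+1)$-marked pencil of hyperplane sections, and that marked pencil has one of the combinatorial types listed in (a).

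The number of isotopy classes within a fixed type $\lambda$ is by definition $Z(\lambda)$. Summing over the types in the two families then bounds the number of groups.

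The main obstacle is justifying the precise constraint on $\sum l_i$, in particular that a $T_{1^k}$ member consumes exactly multiplicity $k$. Here I would invoke the multiplicity statement of Theorem \ref{stratification}(3). I would then check that a generic line through points of these strata meets the dual variety transversally elsewhere, so that intersection multiplicities equal the stratum multiplicities and the Bezout count $\deg S^\vee_{k-e,k}=2k-e$ applies. The remaining steps are bookkeeping.
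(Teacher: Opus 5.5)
Your proposal is correct and follows essentially the same route as the paper. Both use Bezout's relation for the degree-$(2k-e)$ dual variety, with multiplicities $l$ on $S_{1^l}$ and $k$ on $T_{1^k}$ from Theorem \ref{stratification}(3), split into pencils without and with a marked $T_{1^k}$ member, count partitions, and get (b) from isotopy invariance of the complements together with the definition of $Z(\lambda)$. One simplification: the transversality check you single out as the main obstacle is not needed for an upper bound, since a line always meets a hypersurface at a point with intersection multiplicity at least the multiplicity of the hypersurface there, and that alone gives $\sum l_i\le 2k-e$ (resp. $\le k-e$).
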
 

\begin{proof} 
We first count the partitions given by a combinatorial types of $r+1$-marked pencils with members only from $S$-strata. 
By Bezout relation for a pencil intersecting only strata $S_{1^l}$ in the discriminant having degree $2k-e$, one has $\sum l_i=2k-e, l_i \le k-e$ where the summation is over all 
singular members of the pencil. For the singular members which are marked, one has $0 \le \sum l_i=a \le 2k-e$ i.e. 
combinatorial type is given by a partition of $a=2k-e-j$.
For example, for $a=0$ all marked members are smooth and for $a=2k-e$, $a$ members have 2 irreducible components one of which is $F$ and
another is an irreducible curve in $E+(k-1)F$ and remaining $r+1-a$ marked members are smooth. 
The second summand represents the pencils with one singular member containing the directrix for which the Bezout relation is $\sum l_i=k-e$ 
since the multiplicity of smooth points in $T_{1^k}$ is $k$. The part (b) follows by the same argument as in (a) and the definition of Zariski weight. 
\end{proof} 

\begin{example}\label{casee+1} 
Let $k=e+1$. Then the above estimate becomes combinatorial $e+2$-marked pencils: $\sum_{j=0}^{e+2}p(e+2-j,1)+p(1-j)=e+5$. 
Indeed, the first sum counts the combinatorial types of pencils containing $a$ singular members in strata $S_{1^1}$ where $0\le a \le e+2$ and 
$e+2-a$ and each  summand is equal to 1. The second sum is 2 which accounts for two pencils: the one having one marked member from $T_{1^k}$, $k-e$ 
members from stratum $S_1$ and the rest are smooth and the second contains as marked members a member from $T_{1^k}$ and the rest are smooth.

The  reducible curves fundamental groups of the complements to which are extremal have as irreducible components $i=0, \cdots e+2$ curves in $E+eF$ and $i$ rulings and $r+1-i$ smooth curves in $E+(e+1)F$
For this $k$, a pencil satisfying condition (a) of Prop. 
\ref{pencils} 
cannot contain a curve in stratum $T$. Indeed, if this would be 
the case, then the base locus of the pencil of hyperplanes in $\PP^{e+3} \supset S_{1,e+1}$ must intersect the directrix at a base point of the pencil on 
$S_{1,e+1}$ and since none of the $E+eF$ components of 
the curves of the pencil intersects the directrix the ruling at this intersection point must be the fixed component i.e. we consider pencil which 
is not in $E+(e+1)F$ (after removing fixed component).  Hence Zariski weight of the pencils counted by the second sum in Proposition has $Z(\lambda)=0$ and hence  there are at most $e+3$ subgroups in corresponding to this linear system. One can show that this bound is sharp. 
\ref{upperboundpartitions} 

The pencils with members having components in the stratum $T$ do appear in linear systems $E+kF, k>e+1$ in which case the base point of the pencil on the directrix contains smooth irreducible components of singular members. 

We  denote the $r$-extremal group of the union of the $r+1$ members of a pencil on $S_{1,e+1}$ $i$ of which are reducible 
curves with two irreducible components in $E+eF$ and $F$ as $Q^i_{r}, e=0,\cdots e+2$.
\end{example} 

\begin{rem} 
Precise range of $Z(\lambda)$ i.e. cardinality of "Zariski multiplets" of reducible curves formed by members of 
pencils in linear systems on scrolls intersecting the same strata of the dual variety is not clear. 
However, the proof of Theorem  
\ref{extermalgroupsgenericcase}
  and Prop. 
  \ref{groupsTstratapencils} 
  below shows that the fundamental groups of such 
curves are isomorphic to the fundamental groups of the complements in a quadric to a union of rulings and the numbers of each of two ruling being determined by the strata the pencil intersects
\end{rem}

\subsection{Presentations of extremal groups of scrolls}

\begin{figure}\label{realpicture}
\centering
\hspace*{-1.0 cm}
\begin{tikzpicture}[scale=1.2,
    line cap=round,
    line join=round,
    every node/.style={font=\large}]

\path[use as bounding box] (-3,-2.5) rectangle (13,2.5);

\foreach \x/\lab in {0/{F_1},3/{F_2},6/{F_3},9/{F_4}}
{
  \draw[thick] (\x,-2.2) -- (\x,2.2);
  \node[below] at (\x,-2.2) {$\lab$};
}

\node[left] at (-1.8,1.5) {$R_1$};
\node[left] at (-1.8,0.6) {$R_2$};
\node[left] at (-1.8,-0.4) {$R_3$};
\node[left] at (-1.8,-1.5) {$R_4$};

\draw[thick]
(-1.6,1.5)
.. controls (-0.8,1.4) and (-0.3,1.4) ..
(0,1.4)
.. controls (1.2,1.3) and (2.0,1.2) ..
(3,1.0)
.. controls (4.0,1.0) and (5.0,1.5) ..
(6,1.4)
.. controls (7.2,1.3) and (8.0,1.0) ..
(9,1.0)
.. controls (10.0,2.1) and (10.8,2.0) ..
(11,2.0);

\node[left] at (2,1.5) {$R_1$};
\node[left] at (2,0.7) {$R_4$};
\node[left] at (2,-0.6) {$R_3$};
\node[left] at (2,-1.9) {$R_2$};


\draw[thick]
(-1.6,0.6)
.. controls (-0.8,0.5) and (-0.3,-0.1) ..
(0,-0.8)
.. controls (1.0,-1.2) and (2.0,0) ..
(3,1.0)
.. controls (4.0,0.6) and (5.0,1.0) ..
(6,-0.9)
.. controls (7.0,-0.7) and (8.0,0.8) ..
(9,1.0)
.. controls (10.0,1.2) and (10.8,1.2) ..
(11,1.2);

\node[left] at (5,1.5) {$R_3$};
\node[left] at (5,0.7) {$R_4$};
\node[left] at (5,-0.6) {$R_1$};
\node[left] at (5,-1.9) {$R_2$};


\draw[thick]
(-1.6,-0.4)
.. controls (-0.8,-0.3) and (-0.2,-0.7) ..
(0,-0.8)
.. controls (1.1,-0.1) and (2.0,1.0) ..
(3,1.0)
.. controls (4.0,0.2) and (5.0,-0.8) ..
(6,-0.9)
.. controls (7.0,-0.8) and (8.0,-0.5) ..
(9,1.0)
.. controls (10.0,1.0) and (10.8,0.5) ..
(11,0.5);

\node[left] at (8,1.5) {$R_3$};
\node[left] at (8,0.6) {$R_2$};
\node[left] at (8,-0.6) {$R_1$};
\node[left] at (8,-1.7) {$R_4$};


\draw[thick]
(-1.6,-1.5)
.. controls (-0.8,-1.3) and (-0.3,-1.0) ..
(0,-0.8)
.. controls (1.0,-1.2) and (2.0,-2) ..
(3,-1.7)
.. controls (4.0,-1.3) and (5.0,-1.4) ..
(6,-0.9)
.. controls (7.0,-1.2) and (8.0,-1.5) ..
(9,-1.1)
.. controls (10.0,-0.8) and (10.8,-1.0) ..
(11,-1.0);

\node[left] at (11,2.1) {$R_1$};
\node[left] at (11,1.4) {$R_2$};
\node[left] at (11,0.3) {$R_3$};
\node[left] at (11,-0.6) {$R_4$};

\draw[thick] (0,-0.8) circle (0.12);

\draw[thick] (3,1.0) circle (0.12);

\draw[thick] (6,-0.9) circle (0.12);

\draw[thick] (9,1.0) circle (0.12);

\end{tikzpicture}
\caption{Singular members $R_i+F_i, i=1,2,3,4$ of $E+3F$ on $F_2$}
\end{figure}

The main observation about the complements to  unions of members of pencils of hyperplane sections of a scroll with reduced members 
(cf. Figure \ref{realpicture}) 
is that they are biregular to the complements to the unions of fibers to two fibrations of $\PP^1\times \PP^1$ and in particular all extremal groups are products of free groups. 
In this section we will describe a case by case analysis of the complements to such pencils. 

\begin{thm}\label{extermalgroupsgenericcase} a) The fundamental group of the complement to the union of $r+1$ members of a generic pencil of hyperplane sections on a polarized 
surface $(F_e,\CO(E+kF)), k\ge e+1$ including all $2k-e$ singular members of this pencil is a product of two free groups 
\begin{equation}\label{productfreegroups} F_{2k-e-1}\times F_{r}
\end{equation}

b) On a polarized surface as in a), the fundamental group of the complement to a curve that is a union of $r+1$ members of a non-generic pencil intersecting at most two strata $S_{1^i}$ with $i>1$ \footnote{we restrict ourself only to such combinations of strata since only these cases appear in classification of 
pencils in linear systems with degree of discriminant t most 6. The method we use works for more general pencils} and such that irreducible components of all singular members of this pencil are among the components of the curve 
 is also given by (\ref{productfreegroups}). 
\end{thm}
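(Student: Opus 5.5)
The plan is to blow up the base points of the pencil and then contract suitable components of the singular members. This should identify the complement with the complement in $\PP^1\times\PP^1$ to a union of fibers of the two rulings. That complement is a product of two punctured projective lines, so its fundamental group is a product of free groups.

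\emph{Step 1: blow up the base locus.} A pencil of hyperplane sections of $S_{k-e,k}$ has $(E+kF)^2=2k-e$ base points. Blowing them up gives $\tilde F_e$ and a morphism $\pi:\tilde F_e\to\PP^1$ whose general fiber is a smooth rational curve of self-intersection $0$. By Proposition \ref{pencils classification}, in the generic case each singular member lies in $S_{1^1}$. Such a member is $R_i+F_i$, with $R_i\in|E+(k-1)F|$ and $F_i$ a ruling, and there are $2k-e$ of them. On $\tilde F_e$ the two components are rational curves meeting transversally in one point, and $\tilde R_i^2+\tilde F_i^2=-2$.

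We count which base points lie on which component. The ruling $F_i$ meets a smooth member of the pencil in $(E+kF)\cdot F=1$ point, so $F_i$ contains exactly one base point. Hence $\tilde F_i^2=-1$, and consequently $\tilde R_i^2=-1$.

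\emph{Step 2: contract and identify the complement.} Contract all $\tilde F_i$. The result is a smooth surface $Y$ with a $\PP^1$-fibration in which every fiber is smooth, so $Y=F_m$ for some $m$.

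\begin{itemize}
\item \textbf{Part a).} Each of the $2k-e$ base points lies on exactly one $F_i$, since the base points are distinct and the total count is $2k-e$. Therefore the exceptional curves $\tilde E_1,\dots,\tilde E_{2k-e}$ are sections of $\pi$ whose self-intersection rises from $-1$ to $0$ after the contraction. Disjoint sections of self-intersection $0$ force $m=0$ and $Y=\PP^1\times\PP^1$, with the $\tilde E_j$ becoming fibers of the second ruling.
\item \textbf{The complement.} $F_e\setminus D$ differs from $\tilde F_e\setminus(\tilde D\cup\bigcup\tilde E_j)$ only by removed points. Since all $\tilde F_i\subset\tilde D$ are contracted, this complement is biregular to $\PP^1\times\PP^1$ minus $r+1$ fibers of the first ruling and $2k-e$ fibers of the second. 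Its fundamental group is $\pi_1(\PP^1\setminus\{r+1\})\times\pi_1(\PP^1\setminus\{2k-e\})=F_r\times F_{2k-e-1}$. This is the same mechanism as in Proposition \ref{quadricextremal}, and it is visible in Figure \ref{realpicture}.
\end{itemize}

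\emph{Step 3: part b).} Here the pencil meets up to two strata $S_{1^{l}}$ with $l>1$. Such a member is $R+F_{1}+\dots+F_{l}$ with $R\in|E+(k-l)F|$. Each ruling $F_j$ again carries exactly one base point, so $\tilde F_j^2=-1$ and the rulings are pairwise disjoint. The curve $R$ meets each $F_j$ once, so $\tilde R^2=-l$.

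Contract all rulings in all singular members. As before, this produces a $\PP^1$-bundle whose sections are the images of the $\tilde E$'s. By the Bezout relation $\sum l_i=2k-e$ from Proposition \ref{pencils classification}, there are exactly $2k-e$ rulings in total, one through each base point. Every base point therefore lies on exactly one contracted curve, and again $Y=\PP^1\times\PP^1$. The hypothesis that all components of all singular members are deleted guarantees that the contracted curves lie inside $D$. The complement is then $\PP^1\times\PP^1$ minus $r+1$ fibers of one ruling and $2k-e$ fibers of the other, which gives (\ref{productfreegroups}).

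\emph{Main obstacle.} The delicate step is the base-point bookkeeping in b). We must verify that each base point lies on exactly one ruling among the singular members, so that no exceptional curve keeps self-intersection $-1$ or drops further. This follows from the facts that a ruling meets the generic member once and that the multiplicity count of strata (Theorem \ref{stratification}, claim 3) totals $2k-e$. The transversality condition (b) of Proposition \ref{pencils} ensures that base points are simple. We must also check that the blow-up and contraction identify complements and not merely surfaces; this holds because the removed exceptional curves and contracted rulings are all contained in the deleted set.
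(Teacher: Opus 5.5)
Your proposal is correct and follows essentially the same route as the paper: blow up the $2k-e$ base points, contract the proper transforms of the rulings (each a $(-1)$-curve through exactly one base point), and identify the complement with $\PP^1\times\PP^1$ minus $r+1$ fibers of one ruling and $2k-e$ fibers of the other. You also make explicit two points the paper only asserts: the count $\sum l_i=2k-e$ in part b), and the use of a self-intersection-zero section to force $Y=\PP^1\times\PP^1$ (the paper relies only on the intersection form).
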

\begin{figure}[htbp]
\centering
\begin{tikzpicture}[scale=0.6, >=stealth, every node/.style={transform shape}]

\foreach \i [evaluate=\i as \xshift using (\i-1)*3.2] in {1,2,3,4}{
  \begin{scope}[xshift=\xshift cm]
    
    \draw[thick] (0,0) -- (0,5) node[above] {$E_{\i}$};

    \draw[thick] (-1.1,4) -- (1.1,4) node[right] {\ifnum\i=1 $F_1$\else $R_1$\fi};
    
    \draw[thick] (-1.1,3) -- (1.1,3) node[right] {\ifnum\i=2 $F_2$\else $R_2$\fi};
    
    \draw[thick] (-1.1,2) -- (1.1,2) node[right] {\ifnum\i=3 $F_3$\else $R_3$\fi};
    
    \draw[thick] (-1.1,1) -- (1.1,1) node[right] {\ifnum\i=4 $F_4$\else $R_4$\fi};

    \pgfmathtruncatemacro{\ypos}{5-\i}
    \draw[thick] (0.55,\ypos-0.4) -- (0.55,\ypos+0.4) node[above] {$R_{\i}$};

  \end{scope}
}

\end{tikzpicture}
\caption{Blow up of $F_2$ at base points of generic pencil in $E+3F$}
\label{fig:four_graphs}
\end{figure}

\begin{proof} First consider the case a). Recall that in a generic pencil each singular curve is reducible with two components one of which is a ruling. $2k-e$ base points of this pencil with one lying on each of $2k-e$ rulings (that are irreducible components of singular 
members of this pencil) and through each of such point all $2k-e-1$ remaining residual irreducible components of singular members are passing 
as well as $r+1-2k-e$ smooth deleted members (cf. Fig. \ref{realpicture} for the case $e=2,k=3$; circles mark $4$ base points of the pencil). Blowing up each of base points produces $2(2k-e)+(r+1-2k+e)+(2k-e)=2(2k-e)+r+1$ irreducible curves. 
Self-intersection of proper preimage of each of smooth component in $E+kF$ is zero. On the other hand self-intersection of a proper preimage of 
residual component of a singular member, having on the scroll self-intersection $2k-e-2$ and containing $2k-e-1$ base points will be $-1$ 
as will be the self intersections of irreducible components that are rulings (and of course the exceptional curves). Consider 
blow down of proper preimages of the fibers. The images of proper transforms of residual curves, of the exceptional curves will have self-intersection zero since each intersects proper transform of single fiber and so will be the self-intersections of proper pre-images of deleted $r+1-2k+e$ members of 
smooth members of the pencil. Each smooth member and residual component of singular member will intersect image of exceptional curve at single point. 
This implies that the intersection form on the blow down is  $\begin{pmatrix} 0 & 1 \\ 1 & 0 \\ \end{pmatrix}$ the surface itself is $\PP^1 \times \PP^1$. Hence the complement to $r+1$ member of the pencil we consider is biregular to 
$\PP^1\times \PP^1$ with deleted $r+1$ and $2k-e$ in corresponding rulings of the surface.   

In the case b) configurations and the numbers of irreducible components will be different than in a). 
Let us start with considering in more detail the case when one has only one member in 
the stratum $S_{1^i}$ (so that remaining deleted members are $2k-e-i$ members from the main stratum $S_1$ and remaining $r+1-(2k-e-i+1)=r-2k+e+i$ members being smooth). We have $2(2k-e-i)+(i+1)=2(2k-e)-i+1$ irreducible components given by $2k-e$ rulings, $2k-e-i$ residual components of the members with 
1 singular points, 1 residual component of the member with $i$ singularities and the rest being smooth members. 
As before, all $2k-e$ base points positioned by one on each ruling, but multiplicity of the union of all components at each base point is $2k-e-i+1$. 
Self-intersection of the residual component  $Res_0$ of the member from $S_{1^i}$ is $2(k-i)-e$ while for the members in $S_1$ are the same as in previous case. After blow up of all $2k-e$ base points, the self intersection of the proper transform of $Res_0$ is $-i$ and since $Res_0$ intersects $i$ rulings its image after blow down of the rulings will be zero. So will be the case for the residual curves of the members $S_1$  and the exceptional 
curves of the blow ups of the scroll. As a result we obtain, that the the complement to original arrangement of irreducible curves is biregular to 
$\PP^1 \times \PP^1$ with again removing respectively $2k-e$ and $r+1$ rulings from different rulings.   

If a pencil contains $2k-e$ members from the strata $S_{1^i},S_{1^j},S_1$  then the number of irreducible components will be $3(2k-e)-i-j+2$ with the number of base points unchanged.
Keeping track of the multiplicities of base points and changes in blow ups and blow down shows the claim of the theorem.
\end{proof}

\begin{cor}\label{addingmembersgenericcase} The fundamental group of the complement to the union of $r+1$ members of the generic pencil on polarized surface
$(F_e,\CO(E+kF)), k\ge e+1$ among which there are  $2k-e-i$ singular members and $r+1-2k+e+i$ smooth members ($0 \le i \le 2k-e$) is a product:
$$F_{r} \times F_{2k-e-1-i}$$
\end{cor}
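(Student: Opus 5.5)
The plan is to reuse the birational model from the proof of Theorem \ref{extermalgroupsgenericcase} a) and then fill in the missing singular members, as in the proof of Proposition \ref{quadricextremal}.

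First I construct the model. Blow up the $2k-e$ base points of the generic pencil. Then blow down the proper transforms of the $2k-e$ rulings, which are the components of type $F$ of the singular members. As shown in the proof of Theorem \ref{extermalgroupsgenericcase}, the result is $\PP^1\times\PP^1$, and the two projections behave as follows. One projection is the pencil map. Its fibers are the images of the members of the pencil, and in particular the images of the residual components $R_j$ of the singular members. The other ruling contains the images of the $2k-e$ exceptional curves $E_j$, each of which became a $0$-curve. Each contracted ruling $F_j$ is collapsed to the point where $E_j$ meets $R_j$.

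Next I identify the complement with a product. Take the union $D$ of $r+1$ members of which only $2k-e-i$ are singular. Removing this union from $F_e$ is the same as removing it from the blow up together with all exceptional curves $E_j$, since the base points lie on $D$. Now look at a singular member that is \emph{not} deleted. Its components $F_j$ and $R_j$ are present in the complement, so the point to which $F_j$ is contracted is a point of the complement. Under the blow down the open set $F_j\setminus(F_j\cap E_j)$ fills in part of $E_j$. So for each undeleted singular member, the corresponding exceptional curve $E_j$ is no longer entirely removed. I would verify the following: after blowing down, the complement is
\[
\PP^1\times\PP^1\setminus\Big(\bigcup_{\text{deleted } j}E_j\times\PP^1\ \cup\ \bigcup_{1}^{r+1}\PP^1\times p_m\Big),
\]
up to removing the curves $E_j$ of undeleted members minus a point, or rather up to modifications of real codimension $\ge 4$ or of the kind handled in Proposition \ref{quadricextremal}. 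The precise handling of these curves is the delicate point. The cleanest route is algebraic: by (\ref{killingmeridians}) and Theorem \ref{ZvK}, filling in the member $F_j\cup R_j$ adds the relations killing the meridians of $F_j$ and of $R_j$. Exactly as in the proof of Proposition \ref{quadricextremal}, the meridian of the contracted component $F_j$ is identified with the meridian of the image of $E_j$. Killing it therefore removes one generator of the free factor $\pi_1(\PP^1\setminus\{2k-e \text{ points}\})=F_{2k-e-1}$. Killing the meridian of $R_j$ is then already implied, because $R_j$ is a fiber of the pencil projection and $R_j$ is not among the $r+1$ removed fibers after the filling.

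I carry this out by induction on $i$. Start from Theorem \ref{extermalgroupsgenericcase} a), which gives $F_{2k-e-1}\times F_r$. Filling in one singular member while keeping the count $r+1$ (replace it by a smooth member, which keeps the $F_r$ factor, since the image fibers are still $r+1$ fibers) kills one free generator of the first factor. This gives $F_{2k-e-1-i}\times F_r$.

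The main obstacle is the identification of meridians. I must justify that the meridian of $F_j$ maps to a single free generator of the first factor, and that the relation for $R_j$ adds nothing further. In particular the product structure must survive, so that no relation mixes the two factors. I would check this by the local computation around $F_j\cap E_j$ described in the footnote of Proposition \ref{quadricextremal}: there meridians and parallels of two transversal curves are interchanged. I would then verify the boundary cases $i=0$ and $i=2k-e$, which give $F_r\times F_{2k-e-1}$ and $F_r\times F_{-1}$ respectively. The second needs reinterpretation, presumably as trivial, with the whole first factor $\CC$.
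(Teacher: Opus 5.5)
Your overall strategy is the paper's: start from the product in Theorem \ref{extermalgroupsgenericcase} a), glue the singular members back in by killing meridians as in (\ref{killingmeridians}), and use the local blow-up picture to turn the ruling's meridian into an exceptional-curve meridian. But the meridian bookkeeping, which you yourself single out as the key step, is wrong as stated, and the claim you propose to verify would fail.

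In a group where $F_j\cup R_j$ is deleted, the blow-down sends $F_j$ to the point where the images of $E_j$ and $R_j$ meet transversally. The local computation there gives $m_{F_j}=m_{E_j}\,m_{R_j}$, with a common conjugator; this is the identity $m_F=m_E\cdot m_R$ used in the paper's proof. So $m_{F_j}$ is a conjugate of a \emph{mixed} element $(\alpha_j,\beta_j)$. Here $\alpha_j$ is a generator of the factor coming from the exceptional curves and $\beta_j$ is a generator of the pencil factor. Killing $m_{F_j}$ alone gives $(A\times B)/\langle\langle(\alpha_j,\beta_j)\rangle\rangle$, which is not a product of free groups. Nor is $m_{R_j}=1$ ``already implied'': it is precisely the relation that removes $\beta_j$.

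Your induction step hides this. The complement of $r+1$ members containing all singular ones and the complement after swapping one singular member for a smooth one are not nested. So no filling-in takes $F_{2k-e-1}\times F_r$ to the next group while ``keeping $F_r$''. You must first delete the extra smooth member, which raises the pencil factor to $F_{r+1}$.

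The repair is what the paper does. Apply Theorem \ref{extermalgroupsgenericcase} a) to $r+1+i$ members including all $2k-e$ singular ones, obtaining $F_{2k-e-1}\times F_{r+i}$. Then impose $m_{F_j}=m_{R_j}=1$ for the $i$ glued-in members. By the identity above this is equivalent to $m_{E_j}=m_{R_j}=1$, that is, to killing conjugates of $(\alpha_j,1)$ and $(1,\beta_j)$. Normal closures of such elements in a direct product split factorwise, giving $F_{2k-e-1-i}\times F_r$. The first factor is trivial when $i\ge 2k-e-1$, consistent with your reading of $F_{-1}$.

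Your remark about $R_j$ can instead be made correct in a different space: the complement of the target curve together with the rulings $F_j$ of the undeleted singular members. There every contracted curve is deleted, so the blow-down is an isomorphism onto $(\PP^1\setminus\{2k-e\ \mathrm{pts}\})\times(\PP^1\setminus\{r+1\ \mathrm{pts}\})$. The image of $R_j$ is an undeleted fiber, so $m_{F_j}=m_{E_j}$ there, and gluing back $F_j\setminus(F_j\cap E_j)$ kills exactly $\alpha_j$. This route requires running the blow-down argument on that space directly, not citing Theorem \ref{extermalgroupsgenericcase} a).

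Your displayed geometric model is also inaccurate: every $E_j$ stays deleted, since all base points lie on $D$.
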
 

\begin{proof} To calculate the fundamental group of the complement to the union of curves as in the corollary, apply the Theorem  \ref{extermalgroupsgenericcase} to the case of generic pencil with $r+1+i$ members removed which include {\it all} singular members of this pencil. 
The fundamental group of the complement in $F_e$ to this union of $r+i+1$ curves, or equivalently, the complement to their total 
preimages in the blow up of the base points, will be $F_{r+i} \times F_{2k-e-1}$. Now let us glue in $i$ singular members. The result will be the quotient 
of the latter groups by the normal subgroup generated by meridians of residual curves and the rulings of glued in members. Meridians of $i$ residual 
curves correspond to $i$ generators of the first factor and the meridians of the ruling of glued in singular members will 
correspond after blow down of these $i$ rulings to $i$ meridians of exceptional curves on resolution of the $i$ (out of total $2k-e$) base points of the original pencil corresponding to $i$ glued in members. These meridians of exceptional curves correspond to  
the  generators in the free group $F_{2k-e-1}$ of the second factor. Clearly, the quotient by the normal subgroup generated by such relations 
\footnote{i.e. one can see the relations $m_E=1$, where $m_E$ is the meridian of exceptional curve $E$ obtained by blowing up of the ruling of 
glued in singular member of the pencil, as follows. After blowing up a base point, the ruling $F$  containing with satisfy $F^2=-1$ and small $C^{\infty}$ 
deformation of it will be $S^2$ intersecting the proper preimage $F$, the exceptional curve $E$ and preimage of the residual curve $R$ of the singular fiber. Their meridians
will satisfy the relation $m_F=m_E\cdot m_R$. After blowing down the proper preimage of the ruling and adding relations $m_F=m_R=1$ we also will have $m_E=1$}
is isomorphic to the product given in the statement of the corollary.  
\end{proof}. 

\begin{prop}\label{groupsTstratapencils} The fundamental groups of the complement to arrangements $\A^{r+1}_S(T, \S_1\cdots \S_{k-1})$ of $r+1$ curves on the scrolls $S=(F_1,\CO(E+2F)), (F_1,\CO(E+3F)),(F_2,\CO(E+3F))$ that is a union of $r+1$ members of a pencil of hyperplane sections, all of which are smooth except for $k-1$ curves 
from the strata $\S_i, i=1,\cdots k-1$ and one is from a stratum  of type $T$ are as follows:

a) for strata $(T_2,S_1)$ on $(F_1,\CO(E+2F))$, one has $$\pi_1(F_1\setminus \A^{r+1}_{F_1,\CO(E+2F)}
(T_2,S_1))=F_{r}\times F_{2}$$

b) for $(T_{1^3}, S_1^2)$ on  $(F_1,\CO(E+3F))$ one has 
$$\pi_1(F_1\setminus \A^{r+1}_{F_1,\CO(E+3F)}(T_{1^3},S^2_1))=F_{r}\times F_4$$

c) for  $(T_{1^3}, S_{1^2})$ on  $(F_1,\CO(E+3F))$ one has $$\pi_1(F_1\setminus \A^{r+1}_{F_1,\CO(E+3F)}(T_{1^3},S_{1^2}))=F_{r} \times F_4$$

d) for  $(T_{1^3}, S_{1})$ on  $(F_2,\CO(E+3F))$ one has $$\pi_1(F_2\setminus \A_{F_2,\CO(E+3F)} (T_{1^3},S_{1}))=F_{r}\times F_3$$
\end{prop}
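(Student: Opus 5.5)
Following the proof of Theorem \ref{extermalgroupsgenericcase}, I will show that in each case (a)--(d) the complement $F_e\setminus \A^{r+1}_S(T,\S_1,\cdots)$ is biregular to the complement in $\PP^1\times\PP^1$ of a union of fibers of the two rulings. Then the group is $F_{r}\times F_m$, where $r+1$ is the number of removed fibers of one ruling and $m+1$ the number of the other.

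\emph{Step 1: base locus.} The pencil is a line in the dual space meeting the dual variety in one point of $T_{1^k}$ (weight $k$ by Theorem \ref{stratification}(3)) and in points of the listed $S$-strata. The weights add up to $2k-e$, which gives the Bezout relations of Prop. \ref{pencils classification}. The member in $T_{1^k}$ is $E+F_1+\cdots+F_k$. Since $(E+kF)^2=2k-e$ and the pencil is transversal, there are $2k-e$ simple base points. Exactly $(E+kF)\cdot E=k-e$ of them lie on $E$. Each ruling $F_j$ of the $T$-member, and each ruling in an $S$-member, carries exactly one base point.

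I then check the base points are distributed as follows:
\begin{itemize}
\item every smooth member and every residual component in $E+(k-l)F$ passes through all base points lying off its own rulings;
\item the residual component of an $S_{1^l}$-member does not meet $E$ at the base points, because $(E+(k-l)F)\cdot E=k-l-e$ and the intersection count has to balance.
\end{itemize}
This bookkeeping is the step I expect to be the main obstacle. The direct analogue of the generic case fails here: the directrix $E$ has negative self-intersection and contains $k-e$ base points. So the order of blowing down must be chosen carefully so that no curve acquires self-intersection other than $0$ or $-1$ at the wrong moment. In practice I would simply compute, case by case, the self-intersections of the proper transforms on the blow-up $\tilde F_e$ at the $2k-e$ base points:
\begin{itemize}
\item $E$ becomes $-e-(k-e)=-k$;
\item each ruling becomes $-1$;
\item each residual of $S_{1^l}$ becomes $-l$;
\item each smooth member becomes $0$.
\end{itemize}

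\emph{Step 2: blow down.} I contract the proper transforms of all rulings that belong to the deleted members. They are disjoint $(-1)$-curves, so the contraction is legitimate. After contraction:
\begin{itemize}
\item $E$ meets $k$ of these rulings, so its self-intersection becomes $-k+k=0$;
\item a residual of $S_{1^l}$ meets $l$ of them, so it becomes $0$;
\item an exceptional curve meets one of them, so it becomes $0$.
\end{itemize}
In each listed case I then check that the intersection form on the resulting surface is $\begin{pmatrix}0&1\\1&0\end{pmatrix}$, so the surface is $\PP^1\times\PP^1$. The removed curves are then fibers of the two rulings:
\begin{itemize}
\item the images of the $r+1$ pencil members (smooth members, residuals, and $E$) form one ruling;
\item the images of the $2k-e$ exceptional curves form the other.
\end{itemize}

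\emph{Step 3: correction for missing members.} As in Cor. \ref{addingmembersgenericcase}, the exceptional curves coming from base points whose ruling is not deleted are not removed. In (a) and (d) the deleted configuration is not the full set of singular members: there the $S$-strata contribute only $1$ resp. $2k-e-k=1$ members. I handle this by taking all singular members and then gluing back, exactly as in the proof of Corollary \ref{addingmembersgenericcase}. The count of removed fibers in the second ruling then gives:
\begin{itemize}
\item (a) $3$ fibers, giving $F_2$;
\item (b) and (c) $5$ fibers (the $k=3$ rulings of $T$ plus the $2$ rulings of the $S$-part), giving $F_4$;
\item (d) $4$ fibers, giving $F_3$.
\end{itemize}
Together with $r+1$ fibers in the first ruling, this gives $F_r\times F_m$ with the stated values of $m$.
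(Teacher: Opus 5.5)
Your Steps 1 and 2 are the paper's argument, done uniformly rather than case by case. You blow up the $2k-e$ base points and contract the proper transforms of all deleted rulings. You then recognize $\PP^1\times\PP^1$: the images of $E$, the residuals and the smooth members give $r+1$ fibres of one ruling, and the $2k-e$ exceptional curves give fibres of the other. Your self-intersection numbers ($-k$ for $\bar E$, $-l$ for a residual, $-1$, $0$) are correct. So are your counts of contracted rulings met by $\bar E$ (the $k$ rulings of the $T$-member) and by $\bar R$ (its own $l$ rulings). However, two of your claims are wrong and need correcting.

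\emph{First error: the second bullet of Step 1.} As written, this bullet is false in case (b). There each residual $R_i\in|E+2F|$ of an $S_1$-member has $R_i\cdot E=1$. Components of distinct members meet only at base points. Hence $R_i$ meets $E$ exactly at the base point $E\cap G_{i'}$, which lies on the ruling of the \emph{other} $S_1$-member; the paper's proof describes the base points in (b) this way. In general, $R_i\cap E$ consists of the $k-l_i-e$ base points on the rulings of the other $S$-members, which is what your (correct) first bullet already implies. This is not cosmetic. It is what makes $\bar E$ and $\bar R_i$ disjoint on the blow-up. Combined with the fact that they meet disjoint sets of contracted rulings, this is what lets their images be disjoint fibres of the same ruling. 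If $R_i$ met $E$ off the base locus, the images would intersect and your deferred intersection-form check would fail in (b).

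\emph{Second error: Step 3.} Step 3 rests on a misreading of the constraint. By Proposition \ref{pencils classification}, a pencil with a $T_{1^k}$-member satisfies $\sum l_i=k-e$. In all four cases (a)--(d) the listed $S$-strata already exhaust this, since $k+(k-e)=2k-e$ is the degree of the discriminant. So in (a) and (d) the arrangement also contains all singular members, and nothing has to be glued back. The $2k-e$ removed fibres of the second ruling, and hence $F_r\times F_{2k-e-1}$, come directly out of Step 2. With these two corrections your argument is complete and coincides with the paper's.
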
 

\begin{proof} The arguments for these 4 cases are the same and consist of showing that the complement to $r+1$ members
in all cases are biholomorphic to products of complements in $\PP^1$ to finite sets of points. We spell out the details 
only in two cases.
  
In the case a) we have a pencil with 2 its singular members deleted, one having as irreducible components the directrix $E$ and rulings 
$F_1,F_2$, and another being a union of a residual curve $R$ with class $E+F$ and ruling $F_3$, and also deleted $r-1$ smooth members $C_j, j=1, \cdots r-1$. 
This pencil has $3$ base points, $R\cap F_1,R\cap F_2, E\cap F_3$. Blowing them up gives a rational surface with middle Betti number $b_2=5$ with exceptional curve 
$E_i, i=1,2,3$  on which 
one has self-intersections of the proper preimages $\bar R, \bar E, \bar F_i, \bar C_j, i=1,2,3, j=1,\cdots r-1$ as follows  $\bar R^2=-1, \bar E^2=-2, 
\bar F_i^2=-1, \bar C_j^2=0$. After blowing down 3 smooth rational curves $\bar F_i$, one obtains the surface with the images 
of $\bar R, \bar E, \bar C_j, E_i$  having the self-intersection zero, while images of $E_i$ will intersect the images $C_i, R, E$ transversally.
This implies that after blow down the surface we obtain is biregular to  $\PP^1\times \PP^1$ and the sets of curves $E_i, i=1,\cdots 3$ and $C_i,R,E$ are distinct rulings of this product. 
Hence the complement to the above $r+1$ members of this pencil is biholomorphic to $\PP^1\setminus [3] \times \PP^1\setminus [r+1]$ and the claim follows.

Now consider the case c) i.e. when among deleted members of the pencil, there are two singular members and one 
is the union of the directrix $E$ and 3 distinct rulings $F_1,F_2, F_3$, while another is a section $R$ from $E+F$ and two distinct rulings $F_4,F_5$
also different from the rulings in the first singular fiber. The 5 base points are given by the intersection points $RF_1,RF_2,RF_3$ and $EF_4,EF_5$.
Blowing them up gives rational surface with $b_2=7$, containing proper preimages  $\bar E, \bar F_i, \bar R, i=1, \cdots 5$ of the corresponding curves on the scroll, and additional 5 exceptional curves $E_1,\cdots E_5$. One has $\bar F_i^2=-1, \bar R^2=-2, \bar E^2=-3$.
Blowing down proper preimages of the rulings $\bar F_i$ gives us a rational surface with $b_2=2$, on which the intersection indices of images are $\bar R^2=\bar E^2= \bar E_i^2=0$ 
which $\bar E \bar E_i=1, RE_i=1, i=1,\cdots 5 $. Hence the resulting surface is $V_2=\PP^1\times \PP^1$ with $E,R$  and $E_i,i=1,\cdots 5$ 
are different rulings of this direct product.  Moreover, proper preimage  $\bar C_i, i=1, \cdots r-1$ of a smooth member $C_i$ on blow up at five base point will have 
selfintersection 0, and contraction of proper preimages of ruling does not change it as well as transversal intersections with $E_i$. Hence the images 
of $\bar C_i$  they become 
rulings of $V_2$ linearly equivalent to the images of $\bar E$ or $\bar R$. This implies c). 

In remaining cases, the same argument shows that complement in the case b) (when the pencil has 3 singular members and the five base points
located at the 3 intersections of residual curve of 2 members from stratum $S_1$ and the ruling of the member from $T_{1^3}$ and two intersections
of the directrix with one of residual  curves and the ruling of another for each of two curves from the stratum $S_1$) is biholomorphic to 
$\PP^1\setminus [5] \times \PP^1\setminus [r+1]$ (5 rulings in the first factor are the images of 5 exceptional curves of blow up of $F_1$ at 
5 base points of $E+3F$.

In the case d)  the complement is biholomorphic to $\PP^1\setminus [4] \times \PP^1\setminus [r+1]$ with 4 rulings removed in the first factor
corresponding to 4 base points of $E+3F$ in the scroll $F_2$. 
\end{proof}

Next corollary describes fundamental groups of the complement to union of members of pencils which 
contain a proper (possibly empty) subset of the full collection of the singular members of the pencils 
 considered in Prop.\ref{groupsTstratapencils}. If $\S_1,\cdots, \S_k$ are the strata containing 
 elements of the pencil, one for each singular member of the pencil, we decorate the notation in Prop.\ref{groupsTstratapencils}
  denoting by $\A^{r+1}(\S_1^{\pm},\cdots \S_k^{\pm})$ the type of the reducible curves, where the superscript $+$ (resp. $-$) indicates
 that the curve in corresponding stratum is a deleted member (resp. not a deleted member) of the pencil. 

\begin{cor} The fundamental group of the complement:
$$\pi_1(F_e \setminus \A^{r+1}(\S_1^{\pm},\cdots \S_k^{\pm})=F_{r}\times F_{2k-e-i}
$$
where $i$ is the number of rulings appearing in the curves of the strata $\S_i$ with superscript $-$.
\end{cor}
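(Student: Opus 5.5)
The plan is to reduce to Proposition \ref{groupsTstratapencils} and then fill in members, following the pattern of the proof of Corollary \ref{addingmembersgenericcase}.

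\emph{Step 1: realize the curve as a filling-in.} Let $P$ be the pencil defining $\A^{r+1}(\S_1^{\pm},\cdots \S_k^{\pm})$. Let $m$ be the number of strata carrying the superscript $-$. Consider the larger curve $D'$: the union of the $r+1$ marked members together with the $m$ singular members in the $-$ strata. Then $D'$ consists of $r+1+m$ members of $P$ and contains all singular members. By Theorem \ref{extermalgroupsgenericcase} (generic and non-generic $S$-strata cases) and Proposition \ref{groupsTstratapencils} ($T$-stratum cases), the complement $F_e\setminus D'$ is biregular to
\[
(\PP^1\setminus[b])\times(\PP^1\setminus[r+m+1]),
\]
where $b$ is the number of base points of $P$. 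The mechanism is the one used in those proofs: blow up the base points, blow down the proper transforms of all rulings lying in singular members, and obtain $\PP^1\times\PP^1$. In the result, the exceptional curves become $b$ rulings of one family. The residual components, the directrix and the smooth members become $r+m+1$ rulings of the other family. Hence
\[
\pi_1(F_e\setminus D')=F_{b-1}\times F_{r+m}.
\]

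\emph{Step 2: glue the $m$ singular members back in.} By Theorem \ref{ZvK} and (\ref{killingmeridians}), filling in a member kills the meridians of all its components. For each re-glued member, the meridian of its residual component (or of the directrix together with its residual part) is one generator of the $F_{r+m}$ factor. Killing these $m$ generators reduces that factor to $F_r$.

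The meridians of its rulings need separate treatment. By the local computation in the footnote to Corollary \ref{addingmembersgenericcase}, each such ruling meridian satisfies $m_F=m_E m_R$ with respect to the exceptional curve $E$ over the base point on that ruling. So killing $m_F$ and $m_R$ kills the meridian $m_E$, which is one of the generators of the $F_{b-1}$ factor. Distinct rulings pass through distinct base points, so these are distinct generators. If $i$ rulings are re-glued in total, the second factor becomes $F_{b-1-i}$. When all $b$ of those generators would be killed, the relation among the $b$ ruling meridians accounts for the one dependency.

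\emph{Step 3: match the count.} The remaining work is to check that $b-1-i$ equals the exponent $2k-e-i$ in the statement, in the normalization of the notation used there. For the generic case, Theorem \ref{extermalgroupsgenericcase} gives $b-1=2k-e-1$. In the $T$-stratum cases, $b$ is read off from Proposition \ref{groupsTstratapencils}: $3,5,5,4$, giving $F_2,F_4,F_4,F_3$.

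\emph{Main obstacle.} I expect the bookkeeping here to be the main difficulty. One has to ensure that every ruling of a re-glued member passes through exactly one base point not shared with another re-glued ruling. One also has to handle the directrix component of the $T$-member, whose meridian must be treated like a residual component rather than a ruling. I would first verify this on the $T_{1^3}$ case of Proposition \ref{groupsTstratapencils}(c), where the base points $RF_1,RF_2,RF_3,EF_4,EF_5$ are explicit. I would then argue that the same incidence structure (one base point per ruling) holds in general, by Bezout, from Proposition \ref{pencils classification}. The index shift between $b-1$ and $2k-e$ would be reconciled there, adjusting the statement's convention if needed.
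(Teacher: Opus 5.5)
Your argument is the paper's argument, written out in more detail. You start from the curve $D'$ in which every singular member is deleted. You identify its complement with a product of two punctured lines by blowing up the base points and contracting the rulings. Then you glue the $-$ members back in, using $m_F=m_{E_j}m_R$: each re-glued ruling kills the meridian of one exceptional curve, and the non-ruling component kills one generator of the other factor. The paper's proof is exactly this, compressed into one sentence.

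The step that fails is Step 3, and no choice of convention rescues it. For a transversal pencil in $|E+kF|$ the number of base points is $b=(E+kF)^2=2k-e$ in every case. Your values $3,5,5,4$ are precisely $2k-e$ for the four scrolls of Proposition \ref{groupsTstratapencils}. So what your argument proves is
\[
\pi_1\bigl(F_e\setminus \A^{r+1}(\S_1^{\pm},\cdots,\S_k^{\pm})\bigr)\cong F_r\times F_{\max(2k-e-1-i,\,0)},
\]
not $F_r\times F_{2k-e-i}$. The truncation is needed because the product of all $b$ exceptional meridians is trivial, so killing $b-1$ or $b$ of them gives the same trivial factor. This agrees with Corollary \ref{addingmembersgenericcase}, with Proposition \ref{quadricextremal} and with the Table.

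The displayed formula already fails at $i=0$. There it gives $F_3$ for $(F_1,\CO(E+2F))$, instead of the $F_2$ of Proposition \ref{groupsTstratapencils}(a). The paper's proof contains the same slip when it writes $F_{N-1}\times F_{2k-e}$. You should therefore state the corrected exponent rather than try to reconcile $b-1$ with $2k-e$.

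Your ``main obstacle'' is not one:
\begin{enumerate}
\item A ruling $F$ has $F\cdot(E+kF)=1$, so it contains exactly one base point.
\item Distinct rulings are disjoint.
\item By Proposition \ref{pencils classification}, the singular members contain $2k-e=b$ rulings in total.
\end{enumerate}
Hence rulings and base points are in bijection. The same count shows that contracting the $b$ rulings on the blow-up always yields a $\PP^1$-bundle with $b$ disjoint sections of self-intersection $0$, hence $\PP^1\times\PP^1$. So Step 1 holds uniformly, not only in the cases covered by Theorem \ref{extermalgroupsgenericcase} and Proposition \ref{groupsTstratapencils}.
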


\begin{proof} The argument is the same as in the proof of the Corollary \ref{addingmembersgenericcase} combined with the observation 
that filling in member from stratum $\S_i$ which member contain $r$ rulings results in taking quotient by the normal 
closure of meridians of $r$ rulings which after birational transformation described in Prop. \ref{groupsTstratapencils} results in taking normal closure in $F_{N-1}\times F_{2k-e}$ of meridians of $r$ exceptional curves of blow down in this birational transformation. 
\end{proof} 

\section{Scrolls with the degree of discriminant less than 6}\label{finalsection}

In this section we describe the classification of essential and transversal pencils on polarized simply connected surfaces  
with the degree of discriminant less than 6. We also find explicitly the corresponding 
 $r$-extremal fundamental groups.

\begin{thm}\label{finalsummary} If the fundamental group of the complement to a curve on a simply connected surface 
admits a surjection on a free group of rank $r$ greater than 2 that is essential and transversal then the curve is a union of members of a pencil 
and if the pencil is contained in a very ample linear system with the degree of discriminant less than 6 and if $r > 6$ then 
the surface, the pencil and the corresponding the fundamental group of the complement to the curve are listed in the table below. 
\end{thm}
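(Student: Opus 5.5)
The plan is to assemble the statement from results already established, proceeding in three steps.

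\emph{Step 1: reduce to pencils.} Start from an essential and transversal surjection $\pi_1(X\setminus D)\rightarrow F_r$ with $r>2$. Proposition \ref{pencils} applies directly, since conditions (a) and (b) are precisely essentiality and transversality (Definition \ref{threshold}). It gives a pencil on $X$ such that $D$ is a union of reduced members, and such that blowing up the base points once makes the pencil base point free. This proves the first assertion.

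\emph{Step 2: restrict the surfaces and pencils.} Now assume the pencil lies in a very ample linear system whose discriminant has degree less than $6$. By Corollary \ref{atmost6}, the polarized surface is one of the following:
\begin{itemize}
\item $(\PP^2,\CO(1))$,
\item $(\PP^2,\CO(2))$,
\item $(V_2,\CO(1))$,
\item a scroll $S_{k-e,k}$ with $2k-e\le 5$, namely $S_{1,1}$, $S_{1,2}$, $S_{1,3}$, $S_{2,2}$ or $S_{2,3}$. Note that $S_{1,1}$ is $V_2$ again.
\end{itemize}
We take $\nabla$ to be the saturated set of hyperplane sections and their components. For each surface we check that $r>6$ exceeds the threshold $M_2(X,\nabla)$:
\begin{itemize}
\item for scrolls and the quadric, Theorem \ref{keybound} gives $\max(2(k+1)-e,7)$, which is at most $7$ in these cases, so $r+1>7$ suffices;
\item for $(\PP^2,\CO(2))$, the conic-line threshold $6$ of \cite{cogome} applies;
\item for lines in $\PP^2$, the bound of \cite{libyuz} and \cite{stipins} applies.
\end{itemize}
Hence the pencil is a pencil of hyperplane sections, that is, a line in the dual space. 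Condition (a) forbids multiple components, so this line meets the dual variety only in the strata allowed by Proposition \ref{pencils classification}, namely $S_{1^l}$ and at most one point of $T_{1^k}$. The combinatorial constraints are $\sum l_i=2k-e$, or $\sum l_i=k-e$ when a $T$-stratum is met. For the non-scroll cases, the dual variety is a point set, a smooth quadric, or the cubic of singular conics, and the pencil must meet it transversally at smooth points of the reduced stratum.

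\emph{Step 3: enumerate and read off the groups.} For each small $(k,e)$, list the partitions of $2k-e$ (or $k-e$ with a $T$-member) into parts $l_i\le k-e$. Using Example \ref{casee+1}, discard the configurations with Zariski weight zero. In particular, no $T$-member occurs when $k=e+1$. For each remaining type, and each choice of which singular members are among the $r+1$ deleted ones, the group is read off as follows:
\begin{itemize}
\item for $(\PP^2,\CO(1))$, the group is $F_r$;
\item for $(V_2,\CO(1))$, it is given by Proposition \ref{quadricextremal};
\item for conic-line arrangements, it is given by Theorem \ref{presentationquadrics};
\item for scrolls, it is given by Theorem \ref{extermalgroupsgenericcase}, Corollary \ref{addingmembersgenericcase}, Proposition \ref{groupsTstratapencils} and the subsequent corollary, which all give products $F_r\times F_m$.
\end{itemize}
Counting the resulting equisingular families should produce the $17$ rows of the Table.

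The main obstacle is the existence and uniqueness bookkeeping in Step 3. For each admissible partition we must show two things: that a line in the dual space meeting exactly those strata transversally at smooth points actually exists, and that all such lines form a single isotopy class, so that Remark \ref{toptypesofpencils} yields one group per type. I would first try the explicit resultant description (\ref{scrolleq}). This involves choosing $F_1$ and $F_2$ with prescribed common roots along the pencil, and then appealing to irreducibility of the corresponding parameter spaces, which are fibrations over configuration spaces as in the proof of Theorem \ref{stratification}. The case $S_{2,3}$ with a $T_{1^3}$ member is the most delicate.
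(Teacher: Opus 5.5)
Your overall route (Proposition \ref{pencils}, then Corollary \ref{atmost6} and the thresholds, then Proposition \ref{pencils classification}, then the group computations) is how the paper assembles the Table. However, the case analysis in Steps 2--3 is wrong in two places, and carried out as written it produces $13$ families, not the $17$ you expect. The four missing families are exactly the cases your argument never handles.

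\emph{Missing surface.} Your list of scrolls with $2k-e\le 5$ omits $S_{1,4}=(F_3,\CO(E+4F))$ ($k=4$, $e=3$, degree $5$). This surface carries the last two rows of the Table, and pencils on it satisfy the hypotheses of the theorem.

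\emph{False exclusion of $T$-members.} The rule ``no $T$-member occurs when $k=e+1$'', taken from Example \ref{casee+1}, is false, and the Table itself contradicts it. The rows $(T_2,S_1)$ on $(F_1,\CO(E+2F))$, $(T_{1^3},S_1)$ on $(F_2,\CO(E+3F))$ and $(T_{1^4},S_1)$ on $(F_3,\CO(E+4F))$ all have $k=e+1$. Their groups are parts (a) and (d) of Proposition \ref{groupsTstratapencils}, which your procedure would never invoke. Such pencils do exist:
\begin{itemize}
\item[(i)] For $k=e+1$ take $C_1=E+F_1+\cdots+F_k$ and $C_2=R+F_0$, with $R\in|E+eF|$ smooth and $F_0\neq F_i$.
\item[(ii)] Since $R\cdot E=0$, $C_1\cap C_2$ consists of $E\cap F_0$ and the $k$ points $F_i\cap R$. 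These are $k+1=2k-e$ transversal intersections of smooth branches.
\item[(iii)] The base point on $E$ lies on $F_0$, which is a component of only one member, so there is no fixed component.
\item[(iv)] The multiplicities $k$ (for $T_{1^k}$) and $1$ (for $S_1$) exhaust the degree $2k-e$ of the dual variety, so there are no further singular members.
\end{itemize}
The argument in Example \ref{casee+1} tacitly needs two members containing the ruling through the base point on $E$. With a $T$-member, the constraint $\sum l_i=k-e=1$ allows only one $S_1$-member. Declaring these configurations to have Zariski weight zero therefore asserts a non-existence that fails, and leaves those cases without a computed group.

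The main obstacle you identify is not actually needed: you do not have to show that each combinatorial type forms a single isotopy class. The paper's computations (Theorem \ref{extermalgroupsgenericcase}, Corollary \ref{addingmembersgenericcase}, Proposition \ref{groupsTstratapencils}) apply to each individual pencil. One blows up the base points and contracts the proper transforms of the ruling components of the singular members. This gives $\PP^1\times\PP^1$ with the deleted curves becoming rulings of the two families, and filling in members is handled by passing to quotients. So the group $F_m\times F_r$ depends only on which strata are met and which singular members are deleted, not on the Zariski weight. Existence of a given type only matters for the Table's rows being nonempty, not for the implication the theorem asserts.
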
 

{\bf Notation}
\begin{itemize}

\item 

Column 1: the polarized surface:

\item Column 2: The degree of the surface, the degree of the discriminant  and threshold $t$ 
such that for $r>t$ a surjection onto a free group induced by a pencil implies the 
isomorphism type of the group (under conditions of the Theorem). 

\item Column 3: Strata of the discriminant intersected by the  pencil  i.e. the structure of the singular members of the pencil.

\item Column 4: Extremal groups and a reference to  the proof.
\end{itemize}

Theorem \ref{finalsummary} allows to conclude that the fundamental group to a reducible curve with 
classes of components in a saturated set $\nabla$ and admitting a surjection onto a free group of
a rank $r$ above the threshold, then this group admits a surjection onto one of the extremal groups corresponding to $\nabla$.
Moreover, additional conditions on the preimages of conjugacy classes of generators of the free group allow specify 
the extremal group through which the surjection factors. Here we give a sample of exact statements for several extremal groups.

\begin{cor} Let $\A$ be a conic line arrangement in $\PP^2$ such that there exist a surjection 
$\phi: \pi_1(\PP^2\setminus \A) \rightarrow F_r$. Assume that the image of a conjugacy 
class of each of $N$ irreducible components of $\A$ is either trivial or is the conjugacy class of one of the generators $x_i,i=1,\cdots r$
of $F_r$ or is the conjugacy class of $x_{r+1}=(x_1\dots x_r)^{-1}$. Assume also that there are $r+4$ conjugacy classes of components of $\A$ that have
as images $r+1$ allowed conjugacy classes in $F_r$ and that among these $r+4$ classes, 6 are pairwise mapped to 
3 conjugacy classes $\{x_{i_1}\},\{x_{i_2}\},\{x_{i_3}\}$ in $F_r$ and the remaining $r-2$ are mapped one-to-one to distinct conjugacy classes in $F_r$
$x_i, i \ne i_1,i_2,i_3$.
Then $\phi$ factors as \footnote{groups $Q^i_r$ discussed in Theorem \ref{presentationquadrics}} 
\begin{equation}\label{factorization} \pi_1(\PP^2\setminus \A) \rightarrow Q^3_r \rightarrow F_r
\end{equation}

If instead of $r+4$ conjugacy classes, there are $r+2$ conjugacy classes of meridians of components of $\A$, among which $4$ pairwise 
are mapped to $2$ conjugacy classes $\{x_{i_1}\},\{x_{i_2}\}$ of generators and remaining $r-2$ are mapped to $x_i, i\ne i_1,i_2$ one to one, 
then $\phi$ factors through $Q^2_r$. Similar factorizations hold for remaining extremal groups $Q^i_r$.
\end{cor}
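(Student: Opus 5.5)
The plan is to reduce the factorization statement to Proposition \ref{pencils} together with the threshold for conic-line arrangements, and then to identify the relevant subcurve of $\A$ with a union of members of a generic pencil of conics.

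\emph{Step 1: the pencil.} Discard the components whose meridians have trivial image under $\phi$. Let $\A'$ be the union of the remaining $r+4$ components. Since $\pi_1(\PP^2\setminus\A)\to\pi_1(\PP^2\setminus\A')$ is surjective, and the kernel is normally generated by the meridians of the discarded components, $\phi$ factors through $\pi_1(\PP^2\setminus\A')$. The hypotheses give condition (a) of Prop. \ref{pencils} for $\A'$. Under the transversality assumption implicit in the paper's setting, condition (b) also holds. Prop. \ref{pencils} then yields a pencil whose reduced members are the unions of the components sharing the same image class $x_i$, $i=1,\dots,r+1$.

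\emph{Step 2: identifying the members.} Since $r>6$ exceeds the threshold for $\nabla=\{[1],[2]\}$ (Theorem \ref{keybound}, or \cite{cogome} Sect.~4.2), the pencil is a pencil of conics or of lines. There are $r+4$ components distributed among $r+1$ members, and three members have two components each. Such a member of degree $2$ is a pair of lines, so the pencil is a pencil of conics. Essentiality excludes multiple members, so the pencil is the generic one, whose three singular members lie in the stratum of reduced reducible conics. The three members with two components are then exactly the three line pairs, and the other $r-2$ members are smooth conics. Hence $\A'$ is the curve whose group is $Q^3_r$ in Theorem \ref{presentationquadrics}.

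\emph{Step 3: the factorization.} The map $\PP^2\setminus\A'\to\PP^1\setminus[r+1]$ induces $\phi$ and is defined on $\PP^2\setminus\A'$. Therefore $\phi$ factors as $\pi_1(\PP^2\setminus\A)\to\pi_1(\PP^2\setminus\A')=Q^3_r\to F_r$, which is (\ref{factorization}). The second case is identical. There, $r+2$ components give two line-pair members and $r-1$ conic members; the third singular member of the pencil is filled in, which produces $Q^2_r$.

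The main obstacle is Step 2. One must check that the grouping by image classes coincides with the fibers of the pencil, and that no line pair is a degenerate member sitting in another stratum. This requires the essentiality remark following Prop. \ref{pencils}, and a careful count of the components mapped to the same generator.
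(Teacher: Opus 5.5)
Your proposal is correct and follows the paper's own argument: pass to the subarrangement $\A'$ of components with nontrivial image, use Prop.~\ref{pencils} and the classification of conic pencils to identify $\A'$ with the three line pairs and $r-2$ smooth conics of a generic pencil, so that $\pi_1(\PP^2\setminus\A')=Q^3_r$, and factor through the inclusion. Your remark that the kernel of $\pi_1(\PP^2\setminus\A)\to\pi_1(\PP^2\setminus\A')$ is normally generated by meridians killed by $\phi$ is a detail the paper leaves implicit.

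There are two minor slips, both inherited from the paper. First, in the second case one needs $4+(r-1)=r+3$ components to get $r+1$ members; the statement's $r+2$ and $r-2$ are off by one, and your $r-1$ silently corrects this, leaving your own count inconsistent. Second, in that case it is transversality (b), not merely the absence of multiple members, that excludes the pencil of conics tangent at one point and passing through two further points. That pencil also has exactly two singular members, both reduced line pairs.
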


\begin{proof} Let $\A'$ be the subarrangement of $\A$ formed by $r+4$ components having non-trivial image in $F_r$. $\pi_1(\PP^2\setminus \A')$ has 
surjection onto $F_r$ and the conditions on the meridians
of these components and classification of the pencils of quadrics above  imply that this is arrangement formed by three singular quadric and $r-2$ smooth quadrics in a generic pencil of quadrics. Hence $\pi_1(\PP^2\setminus \A')=Q^3_r$ and factorization (\ref{factorization}) is induced by inclusion $\PP^2\setminus A \subset \PP^2\setminus \A'$.
The proof of the last claim is identical to the case of $Q^3_r$
\end{proof}

\begin{center}
\begin{table}\label{resultstable}   
\centering
\caption{Extremal groups for surfaces with degree of the dual variety less than 6}
\begin{tabular}{|l | c | r | r | } 
  \hline
  $(X,L)$  & $L^2$/ deg Disc   & Pencils/Singular members  & Extremal Groups \\ \hline
  $(\PP^2,\CO(1))$ & $d=1, D=0, r>3 $  & None & $F_r$ \\ \hline 
  $ (\PP^2,\CO(2))$ & $d=4, D=3, r>5$ & $\{ L+L,L+L,L+L\}$ & {\it Theorem \ref{presentationquadrics}}: $Q^i_r, i=0,1,2,3$
  \\ \hline
   $ (F_0,\CO(E+F))$ & $d=2, D=2, r>6$ & $E+F,E+F $ & {\it Prop. \ref{quadricextremal}} $\ZZ\times F_{r}, F_{r} $
        \\ \hline
     $ (F_0, \CO(E+2F)$ & $d=4, D=4, r>6$ & $S_1^4: \{\begin{matrix}((E+F)+F, \cr (E+F)+F, \cr (E+F)+F, \cr (E+F)+F \end{matrix} \}  $ &
  $ \begin{matrix} {\it Cor} \ \ref{addingmembersgenericcase}: F_3\times F_r, F_2\times F_r, \cr  \ZZ \times F_r,  F_r  \cr \end{matrix}$ 
    \\ \hline
     $ (F_0, \CO(E+2F)$ & $d=4, D=4, r>6$ & $S_{1^2}= T_{1^2}, S_1^2: \{\begin{matrix}((E+F+F, \cr (E+F)+F, \cr (E+F)+F, \cr \end{matrix} \}  $ &
  $ \begin{matrix} {\it Cor} \ \ref{addingmembersgenericcase}: F_3\times F_r, F_2\times F_r, \cr  \ZZ \times F_r,  F_r  \cr \end{matrix}$ 
       \\ \hline
     $ (F_0, \CO(E+2F)$ & $d=4, D=4, r>6$ & $S_{1^2}^2: \{\begin{matrix}((E+F+F, \cr (E+F+F \cr \end{matrix} \}  $ &
  $ \begin{matrix} {\it Cor} \ \ref{addingmembersgenericcase}: F_3\times F_r, F_2\times F_r, \cr  \ZZ \times F_r,  F_r  \cr \end{matrix}$ 
    \\ \hline
    $ (F_1,\CO(E+2F))$ & $d=3, D=3, r>6$ & $S_1^3: \{\begin{matrix}(E+F)+F, \cr (E+F)+F, \cr (E+F)+F \end{matrix}\}   $ & 
    {\it Cor.}  \ref{addingmembersgenericcase}: 
    $F_2\times F_r, \ZZ\times F_r, F_r$ \\ \hline
    $ (F_1,\CO(E+2F))$ & $d=3, D=3, r>6$ & $T_2,S_1: \{\begin{matrix} (E)+F+F \cr 
                                                                                           (E+F)+F\end{matrix} $ & {\it Prop.} \ref{groupsTstratapencils} $F_2\times F_r, \ZZ\times F_r, F_r$
      \\ \hline
     $ (F_1, \CO(E+3F)$ & $d=5, D=5, r>6$ & $S_1^5: \{\begin{matrix}(E+2F)+F, \cr (E+2F)+F, \cr (E+2F)+F, \cr (E+2F)+F, \cr (E+2F)+F \end{matrix} \}  $ &
  $ \begin{matrix} {\it Cor} \ \ref{addingmembersgenericcase}: F_4\times F_r, F_3\times F_r, \cr  F_2\times F_r, \ZZ \times F_r, F_r  \cr \end{matrix}$ \\ \hline
      $ (F_1, \CO(E+3F)$ & $d=5, D=5, r>6$ & $S_{1^2},S_1^3:\{\begin{matrix}(E+F)+F+F, \cr (E+2F)+F, \cr (E+2F)+F, \cr (E+2F)+F \cr \end{matrix} \}  $ & 
     $ \begin{matrix} {\it Cor} \ \ref{addingmembersgenericcase}: F_4\times F_r, F_3\times F_r, \cr  F_2\times F_r, \ZZ \times F_r, F_r  \cr \end{matrix}$
      \\ \hline
       $ (F_1, \CO(E+3F)$ & $d=5, D=5, r>6$ & $S_{1^2}^2,S_1\{\begin{matrix}( (E+F)+F+F, \cr (E+F)+F+F, \cr (E+F)+F, \cr  \end{matrix} \}  $ &
       $ \begin{matrix} {\it Cor} \ \ref{addingmembersgenericcase}: F_4\times F_r, F_3\times F_r, \cr  F_2\times F_r, \ZZ \times F_r, F_r  \cr \end{matrix}$ 
        \\ \hline
        $ (F_1, \CO(E+3F)$ & $d=5, D=5, r>6$ & $T_{1^3},S_1^2\{\begin{matrix}(E)+F+F+F, \cr (E+2F)+F, \cr (E+2F)+F, \end{matrix} \}  $ & 
   $\begin{matrix}{\it Prop.} \ref{groupsTstratapencils}: F_4\times F_r,  \cr F_3\times F_r, F_2\times F_r, \ZZ \times F_r, F_r \cr \end{matrix} $    \\ \hline
         $ (F_1, \CO(E+3F)$ & $d=5, D=5, r>6$ & $T_{1^3},S_{1^2}:\{\begin{matrix}(E)+F+F+F, \cr (E+F)+F+F, \end{matrix} \}  $ &
         $\begin{matrix}{\it Prop.} \ref{groupsTstratapencils}: F_4\times F_r,  \cr F_3\times F_r, F_2\times F_r, \ZZ \times F_r, F_r\cr \end{matrix} $
          \\ \hline
      $ (F_2,\CO(E+3F))$ & $d=4, D=4, r>6$ & $S_1^4:  \{\begin{matrix} (E+2F)+F, \cr (E+2F)+F, \cr (E+2F)+F, \cr (E+2F)+F \end{matrix} \}  $ 
       &  $ \begin{matrix} {\it Cor} \ \ref{addingmembersgenericcase}: F_3\times F_r, \cr  F_2\times F_r, \ZZ \times F_r, F_r  \cr \end{matrix}$
       \\ \hline
       $ (F_2,\CO(E+3F))$ & $d=4, D=4, r>6$ & $T_{1^3},S_1  \{\begin{matrix}(E)+F+F+F, \cr (E+2F)+F, \cr  \end{matrix} \} $ 
       & 
       $\begin{matrix}{\it Prop.} \ref{groupsTstratapencils}: F_3\times F_r,  \cr F_2\times F_r, \ZZ \times F_r, F_r \cr \end{matrix} $
       \\ \hline
           $ (F_3,\CO(E+4F))$ & $d=5, D=5, r>6$ & $S_1^5:  \{\begin{matrix} (E+3F)+F, \cr (E+3F)+F, \cr (E+3F)+F, \cr (E+3F)+F \cr E+3F \end{matrix} \}  $ 
       &  $ \begin{matrix} {\it Cor} \ \ref{addingmembersgenericcase}: F_3\times F_r, \cr  F_2\times F_r, \ZZ \times F_r, F_r  \cr \end{matrix}$
       \\ \hline
 $ (F_3,\CO(E+4F))$ & $d=5, D=5, r>6$ & $T_{1^4},S_1  \{\begin{matrix}(E)+F+F+F+F, \cr (E+3F)+F, \cr  \end{matrix} \} $ 
       & 
       $\begin{matrix}{\it Prop.} \ref{groupsTstratapencils}: F_4\times F_r,  F_3 \times F_r \cr F_2\times F_r, \ZZ \times F_r, F_r \cr \end{matrix} $
       \\ \hline
\end{tabular}
\end{table}
\end{center}

\begin{rem}
 There are 3 scrolls with the degree of discriminant equal to 6: $S_{3,3},S_{2,4},S_{1,5}$.
One has $p(6,3)=8, p(3)=3, p(6,2)=4,p(2)=2, p(5,1)=1,p(1)=1$. Hence one has 11 combinatorial types 
of pencils, for $S_{3,3}$ (respectively 6 types for $S_{2,4}$ and 2 types for $S_{1,5}$. The extremal groups
for $S_{2,4}$ and $S_{1,5}$ follows from Proposition  \ref{groupsTstratapencils}  and Corollary  \ref{addingmembersgenericcase}.
For $S_{3,3}$ one has the cases of pencils 
with $3$ members from the strata $S_{1^2}$ and not included in Proposition  \ref{groupsTstratapencils}.
 The extremal groups in this cases can be 
obtained using the method in Appendix and will be addressed elsewhere. 
\end{rem}


\section{Appendix: Zariski-van Kampen theorem for complements to divisors on smooth simply connected surfaces via pencils of curves}

Consider a quadruple $(\tilde X,G,\pi_L,E)$, where $\tilde X$ is a smooth simply connected projective surface, $G$ is a reduced curve on $\tilde X$ 
(possibly empty), $\pi_L: \tilde X \rightarrow \PP^1$ is a regular map such that all fibers are reduced and $E$ is a section of $\pi_L$ with none of the fibers $\pi^{-1}(p), p\in \PP^1$ has a singularity at its intersection point with $E$. Not that these assumptions imply that all fiber of $\pi_L$ are transversal to $E$.
  We assume also that $G\cap E=\emptyset$. Cases that arise in this paper are the following: firstly, the blow-up $\tilde X$ of all base points 
of an essential and transversal pencil $L$ (cf. Definition \ref{threshold}) 
on a surface $X$, $G$ is a curve not containing any base points of $L$ and $E$ 
is one of the exceptional curves of the blow up.  Second, there is the case of Hirzebruch surface $F_e$, $L$ is the pencil of rulings and $E$ is the section of the ruled surface $F_e$ such that  $E^2=-e$. 

We describe a presentation for $\pi_1(\tilde X\setminus G)$ in terms of the appropriate geometric monodromy action. 
Note that a far-reaching generalization of Zariski-van Kampen theorem due to I. Shimada (cf. Theorem 3.20 in 
\cite{shimada}) is not applicable to our situation since the auxiliary subspace $Z\subset \PP^1$, appearing  in condition (Z) in this work, admitting 
surjection $\pi_2(Z) \rightarrow \pi_2(\PP^1)$ must be $\PP^1$ itself, but in our situation we do not have a section 
of $\pi_L$ that would allow a choice of base point $\tilde b \in \tilde X\setminus E$  needed to define the action on $\pi_1(\pi_L^{-1}(\tilde b)$ (rather we have two steps: calculation of presentation for  $\pi_1(\tilde X\setminus G\cup \pi_L^{-1}(p)),p \in \PP^1$ using restriction of $\pi_L$ 
admitting a section and additional relations resulting in gluing back $\pi_L^{-1}(p)$). Moreover, we do not assume that the fibers of $\pi_L$ are irreducible.

Let $\widetilde{E}$
be a small $C^{\infty}$ deformation $\widetilde {E}$ such that $\tilde E$ is a section of the normal bundle of $E$ and $\widetilde{E} \cap E$ is a single point $B'$ (or empty if $E^2=0$) 
and the order of tangency of $\widetilde{E}$ and $E$ is $\vert (E\cdot E) \vert$, and let $\bar B'=\pi_L(B')$.  
Over $\PP^1\setminus \bar B'$ $\tilde E$ is a non-vanishing section which 
can be used to trivialize the normal bundle to $E$ in $\pi_L^{-1}(\PP^1\setminus \bar B')$. We use
$\pi_L^{-1}(d_0) \cap \tilde E, d_0 \in \PP^1\setminus (Crit \cup \bar B')$ as a base point of the smooth closed surface $\pi_L^{-1}(d_0)$ 
Without loss of generality one can assume that the fiber $\pi_L^{-1}(\bar B')$ is smooth. We call it the {\it fiber at infinity}.
Let $Crit \subset \PP^1$ be the set of critical values of $\pi_L$ and $\pi_L\vert_G$ i.e. 
$$Crit=\{p \in \PP^1 \vert \pi_L^{-1}(p) \ {\rm or} \ \pi_L^{-1}(p)\cap G \ {\rm is} \ {\rm singular} 
\footnote{i.e. the local intersection index is greater than one at one of the intersection points.}
\} $$
and let $\bar B'' \subset \PP^1\setminus Crit \setminus \bar B')$ be a finite, possibly empty,  set. Fibers over points in $B''$ are smooth and may be deleted 
in the case we will  need to consider complement to a union of $G$ and several fibers of $\pi_L$.
The curve $G$ provides a marking of the fiber $\pi_L^{-1}(p), p \in \PP^1\setminus (Crit \cup \bar B' \cup \bar B'')$ by points $\pi_L^{-1}(p)\cap G$ and we view $E \cap \pi_L^{-1}(p))$ 
as a puncture endowed with the normal to $E$ unit vector $v(p) \in T_{\pi_L^{-1}(p)\cap E}(X)$  that is the tangent to $\pi_L^{-1}(p)$ at the puncture. 
Let $d_0 \in \PP^1 \setminus Crit \cup \bar B' \cup \bar B''$ be a point that will serve as the base point for the loops in $\PP^1$.

Let $\beta$ denote the geometric monodromy of the quadruple $(\tilde X,G,\pi_L,E)$
 defined as the homomorphism given by horizontal arrow in (\ref{geommonodromyformula}) with the target being the mapping class group of diffeomorphisms of the surface $\pi_L^{-1}(d_0)$, with punctures 
 \footnote{or markings; we follows conventions in  \cite{FM} section 2.1} 
  at $\pi_L^{-1}(d_0) \cap G$,  
and also at  $\pi^{-1}_L(d_0) \cap E$, in addition endowed with a vector  $v(\pi_L^{-1}(d_0) \cap \tilde E)$ in the unit tangent bundle of the punctured surface $\pi^{-1}_L (d_0) \setminus (E \cap G) $
(cf. \cite{FM}, Section 4.2.5):
\begin{equation}\label{geommonodromyformula} 
\begin{matrix}\beta: \pi_1(\PP^1\setminus (Crit \cup \bar B' \cup \bar B'',d_0)) & \rightarrow & Mod(\pi_L^{-1}(d_0),\pi_L^{-1}(d_0)\cap G,v(\pi_L^{-1}(d_0)\cap \tilde E)) \cr 
& \searrow & \downarrow \cr 
& & Mod(\pi_L^{-1}(d_0) \setminus T(E) \cap\pi_L^{-1}(d_0)
,\pi_L^{-1}(d_0)\cap G,\partial T(E\cap\pi_L^{-1}(d_0)) \cr
\end{matrix}
\end{equation} Recall that it is 
given by assigning to a path $\gamma$ the isotopy class of 
diffeomorphism of the triple $(\pi_L^{-1}(d_0),\pi_L^{-1}(d_0)\cap G,v(\pi_L^{-1}(d_0)\cap \tilde E))$ 
obtained by selecting a trivialization of the locally trivial fibration  of the triple
$(\phi_{\gamma}^{-1})^*( \pi^{-1}(\gamma), G\cap \pi^{-1}(\gamma),v(\pi_L^{-1} \cap \tilde E)) \rightarrow I$ preserving the trivialization of the normal bundle to $E$ over $\gamma$  where $\phi_{\gamma}: I \rightarrow \gamma$ is a continuous map from $I$ onto $\gamma$  
such that $\phi_{\gamma}(0)=\phi_{\gamma}(1)=d_0$.
The slanted arrow gives an alternative description of the target of geometric monodromy using the identification of the mapping class group of diffeomorphisms of marked 
Riemann surface fixing point-wise the boundary of a small disk, centered at the puncture where $v$ is tangent to $\pi^{-1}_L(d_0)$ and the target of the 
horizontal arrow  
$ Mod(\pi_L^{-1}(d_0),\pi_L^{-1}(d_0)\cap G,\partial T(E)\cap\pi_L^{-1}(d_0))$ (cf. \cite{FM}, sect. 4.2.5).
Hence the above construction gives geometric monodromy also with values in the mapping class group of a surface with boundary. In particular, the target of $\beta$ acts on the fundamental group of $\pi_1(\pi_L^{-1}(d_0) \setminus (\pi_L^{-1}(d_0)\cap (G \cup T(E)), p)$ with a base point $p \in   \partial(\pi_L^{-1}(d_0)\cap T(E))$
\footnote{this is a free group}.  

\begin{thm}\label{ZvK} Let $(\tilde X,G,\pi_L,E)$ be a  quadruple as in the beginning of this appendix. 
 Let  $a_s,b_s, s=1, \cdots g, x_1,\cdots, x_k$ be a system of generators of the free group $\pi_1(\pi^{-1}(d_0)\setminus (G\cup E) \cap \pi^{-1}(d_0), \pi^{-1}(d_0) \cap (E'))$ with $x_i$ representing the meridians of the punctures 
  \footnote{i.e. $2g+k={\rm rk} H_1(\pi^{-1}(d_0)\setminus (G\cup E) \cap  \pi^{-1}(d_0))=
 2g+(G,\pi_L^{-1}(d_0))$ where $g$ is the genus of smooth member of the pencil $L$. Explicitly (for appropriate orientation) one has $\tilde x=x_1\cdots x_k \prod_{s=1}^g[a_s,b_s]$ 
 where $\prod_1^g [a_s,b_s]$ is the product of the commutators representing the attaching map of the 2-cell of closed surface of the generators 
 $a_s,b_s, s=1, \cdots g$ of the fundamental group of closed surface $\pi_L^{-1}(d_0)$. For $g=0$ one has $\tilde x=x_1\cdots x_k$.}, 
 and let $\tilde x$ be the word in generators $x_1,\cdots x_k$ and generators of $\pi_1(\pi_L^{-1}(d_0))$ representing the loop which is the  fiber of the normal circle bundle over
 $E$ containing the point  $\widetilde {E} \cap \pi^{-1}(d_0)$.
   Let 
  $$\beta: \pi_1(\PP^1\setminus Crit \cup \bar B' \cup \bar B'') \rightarrow Mod(\pi^{-1}(d_0) \setminus T(E) \cap\pi_L^{-1}(d_0)
  ,\pi^{-1}(d_0)\cap G,\partial T(E\cap\pi_L^{-1}(d_0)))
   $$ be geometric monodromy (cf. (\ref{geommonodromyformula}) and remarks that follow)   
   and let $\gamma_j, j=1,\cdots, N$ be generators of the free group 
  $\pi_1(\CC\setminus Crit  \cup \bar B'',d_0)$ of the complement in $\CC=\PP^1\setminus \bar B'$
    
 In this setting 
one has the following presentations for the fundamental groups of the complements: 

 \begin{equation}\label{redsection222}({\rm A}): \ \ \ \pi_1(\tilde X\setminus G \cup E  \cup \pi_L^{-1}(Crit \cup \bar B' \cup \bar B''), \tilde E\cap \pi_L^{-1}(d_0))=
 \end{equation}
 $$
 \{\gamma_i,x_j,a_s,b_s, i=1,\cdots N, N={\rm Card} (Crit \cup \bar B''),j=1\cdots k, s=1,\cdots g, \vert \beta (\gamma_i)x_j=\gamma^{-1}_ix_j\gamma_i,$$
 $$
 \beta (\gamma_i)a_s=\gamma^{-1}_ia_s\gamma_i,\beta (\gamma_i)b_s=\gamma^{-1}b_s\gamma_i.
 \}
$$
(complement to reducible curve which is a union of a curve, section $E$ of $\pi_L$ at infinity, 
non-generic fibers of the pencil, the fibers at infinity i.e. the fibers over $\bar B' \in \PP^1$ 
and smooth fibers over $\bar B''$) . The same presentation is valid in the case when $G=\emptyset$ i.e. in the case of the complement to a union of 
(singular and smooth) fibers of $\pi_L$ and the section $E$.  

\begin{equation}\label{redsection}({\rm B}):  \ \ \ \ \ \pi_1(\tilde X\setminus G \cup E \cup \pi_L^{-1}(Crit \cup \bar B''),\tilde E \cap \pi_L^{-1}(d_0))=  
\end{equation} 
$$\{\gamma_i,x_j,a_s,b_s, i=1,\cdots N,j=1\cdots k, s=1\cdots g,  \vert \beta_i(\gamma_i)x_j=\gamma^{-1}_ix_j\gamma_i, $$
$$ \ \  \beta (\gamma_i)a_s=\gamma^{-1}_ia_s\gamma_i,\ \ \beta (\gamma_i)b_s=\gamma^{-1}b_s\gamma_i. \ \ 
\gamma_1\cdots \gamma_N=\tilde x^l=1, 
l= (E \cdot E) 
\}
$$
(complement to a reducible curve which is a union of a curve, non-generic and smooth fibers of the pencil 
and $E$) 

 \begin{equation}\label{redaffine} ({\rm C}): \ \ \ \
 \pi_1(\tilde X\setminus G \cup \pi^{-1}(Crit \cup \bar B'\cup \bar B''), \tilde E \cap \pi^{-1}(d_0))
 \end{equation}
 $$
 \{\gamma_i,x_j,a_s,b_s,i=1,\cdots N,j=1\cdots k, s=1,\cdots, g \vert \beta_i(\gamma_i)x_j=\gamma^{-1}_ix_j\gamma_i, $$
$$
\beta (\gamma_i)a_s=\gamma^{-1}_ia_s\gamma_i,\ \ \beta (\gamma_i)b_s=\gamma^{-1}b_s\gamma_i. \ \ \  x_1\cdots x_k \prod_{s=1}^g[a_s,b_s]=1\} 
$$
(complement to the reducible curve which is a union of a curve, non-generic and smooth fibers of the pencil and the fiber at infinity.)
\begin{equation}\label{redproj}  ({\rm D}): \ \ \ \ \ 
\pi_1(\tilde X\setminus G \cup \pi^{-1}(Crit \cup \bar B''), \tilde E \cap \pi^{-1}(d_0))=  
\end{equation} 
$$
\{\gamma_i,x_ja_s,b_s, ,i=1,\cdots N,j=1\cdots k, s=1,\cdots, g \vert \beta_i(\gamma_i)x_j=\gamma^{-1}_ix_j\gamma_i, $$
$$ \beta (\gamma_i)a_s=\gamma^{-1}_ia_s\gamma_i,\ \ \beta (\gamma_i)b_s=\gamma^{-1}b_s\gamma_i. 
 \ \  \gamma_1\cdots \gamma_N=1, \tilde x=1 \}
$$
(complement to the union of a curve, non-generic and smooth fibers of the pencil)
\begin{equation}\label{classicalcase} ({\rm E}) \ \ \ \    \pi_1(\tilde X\setminus G \cup \pi_L^{-1}(\bar B''), \tilde )=\{x_j,a_s,b_s, j=1\cdots k, s=1,\cdots g  \vert \beta_i(\gamma_i)x_j=x_j,$$
$$ \beta (\gamma_i)a_s=a_s\ \ \beta (\gamma_i)b_s=b_s, i=1,\cdots N, 
  \ \  x_1\cdots x_k \prod_{s=1}^g[a_s,b_s] =1\}
\end{equation}
(complement to a curve and smooth fibers)

In the case when some of the singular members of the pencil $L$  are reducible normal crossing divisors,  
the relations corresponding to filling in such a member are as follows.  Let $r_i>1$ denotes the number of irreducible components 
of the $i$-th such singular member.
Let $\Delta_i$ is a disk bounding the loop representing a generator of $\pi_1(\PP^1\setminus Crit \cup \bar B'\cup \bar B'')$ corresponding to the $i$-th reducible fiber of $\vert L \vert $, let 
$\mu^i_j \in \pi_1(\pi_L^{-1}(\partial \Delta_i)), j=1,\cdots r_i$,
are the meridians of irreducible components of the $i$-th singular member and let  
$\epsilon^i_j=({\mu^i_j})^{-1}\mu^i_{j+1}$. \footnote{For simplicity the subscripts in the expression for $\epsilon^j$ 
selected for the case when the dual graph of the singular member is a chain, which is the only instance we use in this paper. 
The same expressions, with indexing depending on the ordering of irreducible components, take place in the case when the dual graph is a tree for any of its vertices}  Then $\epsilon^i_j$ defines the conjugacy class in $\pi_1(\pi_L^{-1}(d_0),\tilde E\cap \pi_1^{-1}(d_0))$
containing a vanishing cycle i.e. a free loop in $\pi_L^{-1}(d_0)$ which bounds a disk in $\pi_L^{-1}(\Delta)$.
The relations corresponding to filling in such a singular member in (\ref{classicalcase}) can be taken as follows: 
\begin{equation}\label{killingmeridians}
               \mu^i_1= \epsilon^i_j=1 \ \ \ j=1, \cdots r_i-1.
\end{equation}    
 
\begin{proof} Let $\gamma_i, i=1,\cdots N$ be a collection of nonintersecting loops in $\PP^1\setminus (Crit \cup \bar B' \cup \bar B'')$
based at $d_0$, each enclosing exactly one point $d_i$ from the set $Crit$. It is ordered via the counterclockwise order around $d_0$ of segments $\gamma_i \cap D_0$ where $D_0$ is a small disk centered at $d_0$ (giving a good ordered system of generators on $\pi_1(\PP^1\setminus D\cup \bar B')$
 cf. \cite{moishezon}). For each $\gamma_i$, $\pi^{-1}(\gamma_i)\setminus G\cup E \cup \pi^{-1}(d_i))$ is diffeomorphic to 
 a locally trivial fibration over circle with fiber being a Riemann surface diffeomorphic to $\pi^{-1}(d_0) \setminus \pi^{-1}(d_0) \cap s(\PP^1)$ and hence 
 the homotopy exact sequence gives a presentation as an HNN extension of free group by a free group: 
 \begin{equation}\label{hnn}
 \pi_1(\pi^{-1}(\gamma_i)\setminus G\cup E \cup \pi^{-1}(d_i)),\tilde E \cap \pi^{-1}(d_0))= \{s=1,\cdots, g, i=1,\cdots N, 
 \end{equation}
 $$
\gamma_i,x_1,\cdots x_k, a_s,b_s, s=1  
 \vert
  \beta_i(\gamma_i)x_j=\gamma^{-1}_ix_j\gamma_i, 
 \beta (\gamma_i)a_s=a_s\ \ \beta (\gamma_i)b_s=b_s.  
 \}
 $$
 The retraction of $(\PP^1\setminus (Crit \cup \bar B'' \cup \bar B'),d_0)$ onto 
$\cup_1^N \gamma_i$ induces the homotopy equivalence between $\tilde X\setminus G\cup \pi^{-1}(Crit \cup B' \cup \bar B'') \cup E$ and 
 $\pi^{-1}(\cup_1^N \gamma_i)$ and Seifert-van Kampen theorem for the fundamental group of a union of spaces shows that the last presentation 
 of the fundamental group of preimage of single loop $\gamma_i$ yields presentation as in classical proofs of the Zaiski-van Kampen relations (\ref{redsection222}).

To show (\ref{redsection}) we apply Seifert-van Kampen theorem to the decomposition: 
\footnote{Note that the map $\iota: \pi_1(\partial T(\pi_L^{-1}(\bar B')) \rightarrow \pi_1(T(\pi_L^{-1}(\bar B'))$ induced by injection is surjective 
with the kernel being the normal subgroup generated by the meridian in $\pi_1(\partial T(\pi_L^{-1}(\bar B'))$ 
Indeed, $T(\pi^{-1}(p))$, where $\pi^{-1}(p)$ possibly singular fiber, can be retracted, due to conical structure of isolated singularities, 
onto  $T(\pi^{-1}(p))\setminus \Delta$, where 
$\Delta=\Delta_1^v \times \Delta_1^h$ is the polydisk centered at the singular point of $\pi_L^{-1}(p)$ and boundaries of the 1-disks 
being respectively the longitude and meridian of the link of the singularity.
Using the amalgamated product decomposition corresponding to the union: 
$\partial T(\pi_L^{-1}(p))=\{\partial T(\pi_L^{-1}(p)) \setminus (\Delta \cap \partial T(\pi_L^{-1}(p)))\} \bigcup_{\Delta \cap \partial T(\pi_L^{-1}(p))} \{\Delta 
\cap \partial T(\pi_L^{-1}(p))\}$ and the amalgamated product decomposition corresponding to similar union via  
removing $\Delta$ from $T(\pi_L^{-1}(p))$ we obtain the 
assertion about structure of the homomorphism $\iota$ since it is induced by inclusion of factors of amalgamated product, where the surjectivity is immediate 
since the kernel being the normal subgroup generated by the meridian $\partial \Delta^h_1$. Similar argument applies when $\pi_L^{-1}(\bar B')$ has more than one singular point.}
:
$$\tilde X\setminus G \cup \pi_L^{-1}(Crit \cup \bar B'')\cup E=
\tilde X\setminus G \cup \pi_L^{-1}(Crit \cup \bar B' \cup \bar B'')\cup E \bigcup_{\partial T(\pi_L^{-1}(\bar B'))}
 T(\pi_L^{-1}(\bar B'))
$$
which implies that $\pi_1(\tilde X\setminus G \cup \pi_L^{-1}(Crit \cup \bar B') \cup E)$ is the quotient of the group given by presentation
(\ref{redsection222}) by the normal subgroup generated by the meridian of the fiber at infinity $\pi_L^{-1}(\bar B')$. The latter is conjugate 
to the loop identified via gluing map of the circle bundle associated to with the normal bundle to $E$ with 
the $l$-th power of the loop given by the fiber of this bundle i.e. $\tilde x$. This shows (\ref{redsection}).

The isomorphism (\ref{redproj}) also follows from Seifert-van Kampen theorem applied to decomposition
$$\tilde X\setminus G \cup \pi^{-1}(Crit \cup \bar B'') =X\setminus G \cup \pi^{-1}(Crit \cup \bar B'') \cup E \bigcup_{\partial T(E) \cap X\setminus G \cup \pi^{-1}(Crit 
\cup \bar B'')}T(E)
$$
showing that the group (\ref{redproj}) is the quotient of group (\ref{redsection}) by the normal closure of $\tilde x$. 

Finally (\ref{classicalcase}) follows from (\ref{redproj}) by applying Seifert-van Kampen theorem to the union of a regular neighborhood of  irreducible components of $\pi^{-1}(Crit \cup \bar B'')$ i.e.
 by adding relations in the normal closure of the meridians of these irreducible components i.e. $\gamma_i=1$. 
 
 In the case when $\vert L \vert$ has reducible fiber as in the theorem, one has to apply Seifert-van Kampen theorem to union of  the regular neighborhoods 
 of all irreducible components since meridians $\mu^i_j$ of different components need not to be conjugate in $\pi_1(\pi_L^{-1}(\partial(\Delta)))$ where $\Delta$ is the disk
 bounded by the loop $\gamma_i$ in the base of the pencil around the point corresponding to the reducible fiber. The filling in relation $\gamma_i=1$ in the 
 fundamental group of base of $\vert L \vert$
 corresponds to relations $\mu^i_j, j=1,\cdots r_i$.  Let $\mu^i_1$ is the meridian of the component intersecting section $E$. 
 It is not hard to check that if $\mu^i_j,\mu^i_{j+1}$ are meridians of intersecting components then $\mu^i_j(\mu^i_{j+1})^{-1}$ in
  $\pi_1^{-1}(\partial \pi_L^{-1}(\gamma_i))$ is represented by  
 the vanishing cycle in $\pi_1^{-1}(d_0)$. Indeed, the map 
 $\pi_{L,*}: \pi_1(\pi_L^{-1}(\partial \Delta_i)) \rightarrow \pi_1(\partial \Delta_i)$ takes all meridians $\mu^i_j$ to generator of $\pi_1(\partial \Delta_i)$ 
 and hence $((\mu^i_j)^{-1}) \mu^i_{j+1}, j=1,\cdot r_i-1$ is in the kernel of homomorphism $\pi_{L,*}$ of the fundamental groups which is $\pi_1(\pi^{-1}(d_0))$ (cf. (\ref{hnn})). Since $\mu^i_j$ are in the kernel of the map
$ \pi_1(\pi_L^{-1}(\partial \Delta_i)) \rightarrow \pi_1(\pi_L^{-1}(\Delta_i))$ induced by embedding so are $\epsilon^i_j, j=1,\cdots r_i-1$. Since ${\rm Ker} \pi_1(\pi_L^{-1}(d_0) \rightarrow \pi_1(\pi_L^{-1}(\Delta_i))$ is generated by $r_i-1$ conjugacy classes of the vanishing cycles the claim follows.
 Hence the set of relations $\mu^i_1=1, \epsilon^i_j=1,j=1,\cdots r_i-1$ is equivalent to the set of relations 
 $\mu^i_j=1, j=1, \cdots r_i$. This shows the assertion of the theorem in case of reducible fibers. 
\end{proof} 

\end{thm}

\end{document}